\DeclareMathAlphabet{\mathscr}{T1}{pzc}{m}{it} 
\titleformat{\section}[block]{\scshape\filcenter\Large}{\thesection.}{.5em}{}
\titleformat{\subsection}[block]{\bfseries\filcenter\large}{\thesubsection.}{.5em}{\medskip}
\titleformat{\subsubsection}[runin]{\bfseries}{\thesubsubsection.}{.5em}{}[.]
\titlespacing{\subsubsection}{0pt}{10pt}{.5em}
\newtheoremstyle{ntheorem}%
	{\topsep}{\topsep}{\itshape}{0pt}{\bfseries}{.}{.5em}%
	{\thmnumber{#2.\hspace{.5em}}\thmname{#1}\thmnote{ (#3)}}
\newtheoremstyle{ndefinition}%
	{\topsep}{\topsep}{\normalfont}{0pt}{\bfseries}{.}{.5em}%
	{\thmnumber{#2.\hspace{.5em}}\thmname{#1}\thmnote{ (#3)}}
\newtheoremstyle{nremark}%
	{\topsep}{\topsep}{\normalfont}{0pt}{\itshape}{.}{.5em}%
	{\thmnumber{#2.\hspace{.5em}}\thmname{#1}\thmnote{ (#3)}}
\theoremstyle{ntheorem}
  	\newtheorem{theorem}[subsubsection]{Theorem}
  	\newtheorem{proposition}[subsubsection]{Proposition}
	\newtheorem{lemma}[subsubsection]{Lemma}
\theoremstyle{ndefinition}
	\newtheorem{example}[subsubsection]{Example}
\theoremstyle{nremark}
	\newtheorem{remark}[subsubsection]{Remark}
\newcommand{\pdf}[1]{\texorpdfstring{$#1$}{1}}
\renewcommand{\objectstyle}{\displaystyle}
\renewcommand{\labelstyle}{\displaystyle}
	\edef\Drop@@{%
		\dimen@=#1\relax
		\dimen@=.5\dimen@
		\A@=-\sinDirection\dimen@
		\B@=\cosDirection\dimen@
		\setboxz@h{%
			\setbox2=\hbox{\kern3\A@\raise3\B@\copy\z@}%
			\dp2=\z@ \ht2=\z@ \wd2=\z@ \box2
			\setbox2=\hbox{\kern\A@\raise\B@\copy\z@}%
			\dp2=\z@ \ht2=\z@ \wd2=\z@ \box2
			\setbox2=\hbox{\kern-\A@\raise-\B@\copy\z@}%
			\dp2=\z@ \ht2=\z@ \wd2=\z@ \box2
			\setbox2=\hbox{\kern-3\A@\raise-3\B@ \noexpand\boxz@}%
			\dp2=\z@ \ht2=\z@ \wd2=\z@ \box2
		}%
		\ht\z@=\z@ \dp\z@=\z@ \wd\z@=\z@ \noexpand\styledboxz@
	}%
\xydef@\Tttip@{\kern2pt \vrule height2pt depth2pt width\z@
	\Tttip@@ \kern2pt \egroup
	\U@c=0pt \D@c=0pt \L@c=0pt \R@c=0pt \Edge@c={\circleEdge}%
	\def\Leftness@{.5}\def\Upness@{.5}%
	\def\Drop@@{\styledboxz@}\def\Connect@@{\straight@{\dottedSpread@\jot}}}
\xydef@\Tttip@@{%
	\dimen@=.25\dimen@
%	\A@=-\sinDirection\dimen@
 	\B@=\cosDirection\dimen@
	\setboxz@h\bgroup\reverseDirection@\line@ \wdz@=\z@ \ht\z@=\z@ \dp\z@=\z@
%	\kern\A@ \raise\B@ \boxz@ \kern\L@c
%	\kern-\L@c \boxz@ \kern\L@c
	{\vDirection@(1,-1)\xydashl@ \xyatipfont\char\DirectionChar}%
	{\vDirection@(1,+1)\xydashl@ \xybtipfont\char\DirectionChar}%
}
\xydef@\ar@form{
	\ifx \space@\next \expandafter\DN@\space{\xyFN@\ar@form}%
	\else\ifx ^\next \DN@ ^{\xyFN@\ar@style}\edef\arvariant@@{\string^}%
	\else\ifx _\next \DN@ _{\xyFN@\ar@style}\edef\arvariant@@{\string_}%
	\else\ifx 0\next \DN@ 0{\xyFN@\ar@style}\def\arvariant@@{0}%
	\else\ifx 1\next \DN@ 1{\xyFN@\ar@style}\def\arvariant@@{1}%
	\else\ifx 2\next \DN@ 2{\xyFN@\ar@style}\def\arvariant@@{2}%
	\else\ifx 3\next \DN@ 3{\xyFN@\ar@style}\def\arvariant@@{3}%
	\else\ifx 4\next \DN@ 4{\xyFN@\ar@style}\def\arvariant@@{4}%
	\else\ifx \bgroup\next \let\next@=\ar@style
	\else\ifx [\next \DN@[##1]{\ar@modifiers{[##1]}}%]
	\else\ifx *\next \DN@ *{\ar@modifiers}%
	\else\addLT@\ifx\next \let\next@=\ar@slide
	\else\ifx /\next \let\next@=\ar@curveslash
	\else\ifx (\next \let\next@=\ar@curveinout %)
	\else\addRQ@\ifx\next \addRQ@\DN@{\ar@curve@}%
	\else\addLQ@\ifx\next \addLQ@\DN@{\xyFN@\ar@curve}%
	\else\addDASH@\ifx\next \addDASH@\DN@{\defarstem@-\xyFN@\ar@}%
	\else\addEQ@\ifx\next \addEQ@\DN@{\def\arvariant@@{2}\defarstem@-\xyFN@\ar@}%
	\else\addDOT@\ifx\next \addDOT@\DN@{\defarstem@.\xyFN@\ar@}%
	\else\ifx :\next \DN@:{\def\arvariant@@{2}\defarstem@.\xyFN@\ar@}%
	\else\ifx ~\next \DN@~{\defarstem@~\xyFN@\ar@}%
	\else\ifx !\next \DN@!{\dasharstem@\xyFN@\ar@}%
	\else\ifx ?\next \DN@?{\ar@upsidedown\xyFN@\ar@}%
	\else \let\next@=\ar@error
	\fi\fi\fi\fi\fi\fi\fi\fi\fi\fi\fi\fi\fi\fi\fi\fi\fi\fi\fi\fi\fi\fi\fi \next@}
\newcommand{\fl}{\to}
\newcommand{\dfl}{\Rightarrow}
\newcommand{\tfl}{\Rrightarrow}
\newcommand{\qfl}{\xymatrix@1@C=10pt{\ar@4 [r] &}}
\newcommand{\ifl}{\rightarrowtail}
\newcommand{\pfl}{\twoheadrightarrow}
\newcommand{\ofl}[1]{\xymatrix @C=1.5em {\strut \ar [r] ^-{#1} & \strut}}
\newcommand{\odfl}[1]{\xymatrix @C=1.5em {\strut \ar@2 [r] ^-{#1} & \strut}}
\newcommand{\ens}[1]{\left\{ #1 \right\}}
\renewcommand{\angle}[1]{\langle #1 \rangle}
\newcommand{\cl}[1]{\overline{#1}}
\newcommand{\rep}[1]{\widehat{#1}}
\renewcommand{\tilde}[1]{\widetilde{#1}}
\newcommand{\tck}[1]{#1^{\top}}
\newcommand{\sm}{\scriptstyle}
\newcommand{\ie}{\emph{i.e.}}
\renewcommand{\phi}{\varphi}
\renewcommand{\epsilon}{\varepsilon}
\newcommand{\dr}{\partial}
\newcommand{\Nb}{\mathbb{N}}
\newcommand{\Cr}{\EuScript{C}}
\newcommand{\Dr}{\EuScript{D}}
\newcommand{\Er}{\EuScript{E}}
\newcommand{\Rr}{\EuScript{R}}
\newcommand{\Sr}{\EuScript{S}}
\newcommand{\B}{\mathbf{B}}
\newcommand{\C}{\mathbf{C}}
\newcommand{\D}{\mathbf{D}}
\newcommand{\M}{\mathbf{M}}
\renewcommand{\S}{\mathbf{S}}
\newcommand{\W}{\mathbf{W}}
\def\catego#1{{\bf{\sf #1}}}
\newcommand{\Cat}{\catego{Cat}}
\newcommand{\Act}{\catego{Act}}
\newcommand{\Rep}{\catego{Rep}}
\DeclareMathOperator{\Art}{Art}
\DeclareMathOperator{\Gar}{Gar}
\DeclareMathOperator{\Std}{Std}
\DeclareMathOperator{\Inv}{Inv}
\newcommand{\ifthen}[2]{\ifthenelse{#1}{#2}{}}
\newcommand{\typedeuxbase}[3]{
\!\begin{tikzpicture}[
	scale = 0.5, every node/.style = { inner sep = 0mm, outer sep = 0mm },
	onecell/.style = { text height = 1.5ex, text depth = 0ex },
	]
	
\node [onecell] (u) at (0,0) {#1} ;
\node [onecell] (v) at (1,0) {#2} ;

\ifthen{\equal{#3}{1}}
	{
		\node (uv) at ($ (u.north) + (0.5,0.2) $) {} ;
		\draw ($ (u.north) + (0.1,-0.1) $) -- (uv.south) -- ($ (v.north) + (-0.1,-0.1) $) ;
	}
\ifthen{\equal{#3}{0}}
	{
		\node (uv) at ($ (u.north) + (0.5,0) $) {$\scriptstyle\times$} ;
	}

\end{tikzpicture}\!
}
\newcommand{\typedeux}[1]{\typedeuxbase{$u$}{$v$}{#1}}
\newcommand{\typetroisbase}[4]{
\!\begin{tikzpicture}[
	scale = 0.5, every node/.style = { inner sep = 0mm, outer sep = 0mm },
	onecell/.style = { text height = 1.5ex, text depth = 0ex },
	]

\node [onecell] (u) at (0,0) {#1} ;
\node [onecell] (v) at (1,0) {#2} ;
\node [onecell] (w) at (2,0) {#3} ;

\node (uv) at ($ (u.north) + (0.5,0.2) $) {} ;
	\draw ($ (u.north) + (0.1,-0.1) $) -- (uv.south) -- ($ (v.north) + (-0.1,-0.1) $) ;
\node (vw) at ($ (v.north) + (0.5,0.2) $) {} ;
	\draw ($ (v.north) + (0.1,-0.1) $) -- (vw.south) -- ($ (w.north) + (-0.1,-0.1) $) ;

\ifthen{\equal{#4}{1}}
	{
		\node (uvw) at ($ (uv.north) + (0.5,0.3) $) {} ;
		\draw ($ (uv.north) + (0.1,0) $) -- (uvw.south) -- ($ (vw.north) + (-0.1,0) $) ;
	}
\ifthen{\equal{#4}{0}}
	{\node (uvw) at ($ (uv.north) + (0.5,0) $) {$\scriptstyle\times$} ;}

\end{tikzpicture}\!
}
\newcommand{\typetrois}[1]{\typetroisbase{$u$}{$v$}{$w$}{#1}}
\newcommand{\typequatrebase}[7]{
\!\begin{tikzpicture}[
	scale = 0.5, every node/.style = { inner sep = 0mm, outer sep = 0mm },
	onecell/.style = { text height = 1.5ex, text depth = 0ex },
	]

\node [onecell] (u) at (0,0) {#1} ;
\node [onecell] (v) at (1,0) {#2} ;
\node [onecell] (w) at (2,0) {#3} ;
\node [onecell] (x) at (3,0) {#4} ;

\node (uv) at ($ (u.north) + (0.5,0.2) $) {} ;
	\draw ($ (u.north) + (0.1,-0.1) $) -- (uv.south) -- ($ (v.north) + (-0.1,-0.1) $) ;
\node (vw) at ($ (v.north) + (0.5,0.2) $) {} ;
	\draw ($ (v.north) + (0.1,-0.1) $) -- (vw.south) -- ($ (w.north) + (-0.1,-0.1) $) ;
\node (wx) at ($ (w.north) + (0.5,0.2) $) {} ;
	\draw ($ (w.north) + (0.1,-0.1) $) -- (wx.south) -- ($ (x.north) + (-0.1,-0.1) $) ;

\ifthen{\equal{#5}{1}}
	{
		\node (uvw) at ($ (uv.north) + (0.5,0.3) $) {} ;
		\draw ($ (uv.north) + (0.1,0) $) -- (uvw.south) -- ($ (vw.north) + (-0.1,0) $) ;
	}
\ifthen{\equal{#5}{0}}
	{\node (uvw) at ($ (uv.north) + (0.5,0) $) {$\scriptstyle\times$} ;}
	
\ifthen{\equal{#6}{1}}
	{
		\node (vwx) at ($ (vw.north) + (0.5,0.3) $) {} ;
		\draw ($ (vw.north) + (0.1,0) $) -- (vwx.south) -- ($ (wx.north) + (-0.1,0) $) ;
	}
\ifthen{\equal{#6}{0}}
	{\node (vwx) at ($ (vw.north) + (0.5,0) $) {$\scriptstyle\times$} ;}

\ifthen{\equal{#5}{1}}
	{\ifthen{\equal{#6}{1}}
		{
			\ifthen{\equal{#7}{1}}
				{
					\node (uvwx) at ($ (uvw.north) + (0.5,0.3) $) {} ;
					\draw ($ (uvw.north) + (0.1,0) $) -- (uvwx.south) -- ($ (vwx.north) + (-0.1,0) $) ;
				}
			\ifthen{\equal{#7}{0}}
				{\node (uvwx) at ($ (uvw.north) + (0.5,0) $) {$\scriptstyle\times$} ;}
		}
	}
\end{tikzpicture}\!
}
\newcommand{\typequatre}[3]{\typequatrebase{$u$}{$v$}{$w$}{$x$}{#1}{#2}{#3}}
\newcommand{\typecinq}[6]{
\!\begin{tikzpicture}[
	scale = 0.5, every node/.style = { inner sep = 0mm, outer sep = 0mm },
	onecell/.style = { text height = 1.5ex, text depth = 0ex },
	]
	
\node [onecell] (u) at (0,0) {$u$} ;
\node [onecell] (v) at (1,0) {$v$} ;
\node [onecell] (w) at (2,0) {$w$} ;
\node [onecell] (x) at (3,0) {$x$} ;
\node [onecell] (y) at (4,0) {$y$} ;

\node (uv) at ($ (u.north) + (0.5,0.2) $) {} ;
	\draw ($ (u.north) + (0.1,-0.1) $) -- (uv.south) -- ($ (v.north) + (-0.1,-0.1) $) ;
\node (vw) at ($ (v.north) + (0.5,0.2) $) {} ;
	\draw ($ (v.north) + (0.1,-0.1) $) -- (vw.south) -- ($ (w.north) + (-0.1,-0.1) $) ;
\node (wx) at ($ (w.north) + (0.5,0.2) $) {} ;
	\draw ($ (w.north) + (0.1,-0.1) $) -- (wx.south) -- ($ (x.north) + (-0.1,-0.1) $) ;
\node (xy) at ($ (x.north) + (0.5,0.2) $) {} ;
	\draw ($ (x.north) + (0.1,-0.1) $) -- (xy.south) -- ($ (y.north) + (-0.1,-0.1) $) ;

\ifthen{\equal{#1}{1}}
	{
		\node (uvw) at ($ (uv.north) + (0.5,0.3) $) {} ;
		\draw ($ (uv.north) + (0.1,0) $) -- (uvw.south) -- ($ (vw.north) + (-0.1,0) $) ;
	}
\ifthen{\equal{#1}{0}}
	{\node (uvw) at ($ (uv.north) + (0.5,0) $) {$\scriptstyle\times$} ;}
	
\ifthen{\equal{#2}{1}}
	{
		\node (vwx) at ($ (vw.north) + (0.5,0.3) $) {} ;
		\draw ($ (vw.north) + (0.1,0) $) -- (vwx.south) -- ($ (wx.north) + (-0.1,0) $) ;
	}
\ifthen{\equal{#2}{0}}
	{\node (vwx) at ($ (vw.north) + (0.5,0) $) {$\scriptstyle\times$} ;}

\ifthen{\equal{#3}{1}}
	{
		\node (wxy) at ($ (wx.north) + (0.5,0.3) $) {} ;
		\draw ($ (wx.north) + (0.1,0) $) -- (wxy.south) -- ($ (xy.north) + (-0.1,0) $) ;
	}
\ifthen{\equal{#3}{0}}
	{\node (wxy) at ($ (wx.north) + (0.5,0) $) {$\scriptstyle\times$} ;}
	
\ifthen{\equal{#1}{1}}
	{\ifthen{\equal{#2}{1}}
		{
			\ifthen{\equal{#4}{1}}
				{
					\node (uvwx) at ($ (uvw.north) + (0.5,0.3) $) {} ;
					\draw ($ (uvw.north) + (0.1,0) $) -- (uvwx.south) -- ($ (vwx.north) + (-0.1,0) $) ;
				}
			\ifthen{\equal{#4}{0}}
				{\node (uvwx) at ($ (uvw.north) + (0.5,0) $) {$\scriptstyle\times$} ;}
		}
	}
	
\ifthen{\equal{#2}{1}}
	{\ifthen{\equal{#3}{1}}
		{
			\ifthen{\equal{#5}{1}}
				{
					\node (vwxy) at ($ (vwx.north) + (0.5,0.3) $) {} ;
					\draw ($ (vwx.north) + (0.1,0) $) -- (vwxy.south) -- ($ (wxy.north) + (-0.1,0) $) ;
				}
			\ifthen{\equal{#5}{0}}
				{\node (vwxy) at ($ (vwx.north) + (0.5,0) $) {$\scriptstyle\times$} ;}
		}
	}
	
\ifthen{\equal{#1}{1}}
	{\ifthen{\equal{#2}{1}}
		{\ifthen{\equal{#3}{1}}
			{\ifthen{\equal{#4}{1}}
				{\ifthen{\equal{#5}{1}}
					{
						\ifthen{\equal{#6}{1}}
							{
								\node (uvwxy) at ($ (uvwx.north) + (0.5,0.3) $) {} ;
								\draw ($ (uvwx.north) + (0.1,0) $) -- (uvwxy.south) -- ($ (vwxy.north) + (-0.1,0) $) ;
							}
						\ifthen{\equal{#6}{0}}
								{\node (uvwxy) at ($ (uvwx.north) + (0.5,0) $) {$\scriptstyle\times$} ;}
					}
				}
			}
		}
	}					
\end{tikzpicture}\!
}
\begin{document}

\thispagestyle{empty}

\begin{center}
% Titre
\begin{Large}\begin{uppercase}
{Coherent presentations of Artin monoids}
\end{uppercase}\end{Large}

\bigskip\hrule height 1.5pt \bigskip

% Auteurs
\begin{large}\begin{uppercase}
{Stéphane Gaussent \qquad Yves Guiraud \qquad Philippe Malbos}
\end{uppercase}\end{large}

\vspace*{\stretch{1}}

% Résumé, mots-clefs et classification
\begin{small}
\begin{minipage}{12cm}
\noindent\textbf{Abstract --}
We compute coherent presentations of Artin monoids, that is presentations by generators, relations, and relations between the relations. For that, we use methods of higher-dimensional rewriting that extend Squier's and Knuth-Bendix's completions into a homotopical completion-reduction, applied to Artin's and Garside's presentations. The main result of the paper states that the so-called Tits-Zamolodchikov $3$-cells extend Artin's presentation into a coherent presentation. As a byproduct, we give a new constructive proof of a theorem of Deligne on the actions of an Artin monoid on a category.

\bigskip\noindent\textbf{M.S.C. 2000 --} 20F36, 18D05, 68Q42. 
\end{minipage}
\end{small}

\vspace*{\stretch{1}}

%%% Table des matières
\begin{small}\begin{minipage}{12cm}
\renewcommand{\contentsname}{}
\setcounter{tocdepth}{2}

\tableofcontents
\end{minipage}
\end{small}

\end{center}

%\today
\newpage

%%%%%%%%%%%%%%%%%%%%%%%%%%%%%%%%%%%%%%%%%%%%%%%%%
%%%%%%%%%%%%%%%%%%%%%%%%%%%%%%%%%%%%%%%%%%%%%%%%%
\section*{Introduction}
%%%%%%%%%%%%%%%%%%%%%%%%%%%%%%%%%%%%%%%%%%%%%%%%%
%%%%%%%%%%%%%%%%%%%%%%%%%%%%%%%%%%%%%%%%%%%%%%%%%

A Coxeter system $(\W, S)$ is a group $\W$ together with a presentation by a finite set of involutions~$S$ satisfying some (generalised) braid relations that we recall in Section~\ref{Section:GarsideCoherentPresentation}. 
Forgetting about the involutive character of the generators and keeping only the braid relations, one gets Artin's presentation of the Artin monoid $\B^+(\W)$. For example, if $\W = \S_4$, the group of permutations of $\{1,2,3, 4\}$, then $S$ consists of the elementary transpositions $r=(1\ 2)$, $s=(2\ 3)$ and $t=(3\ 4)$, and the associated Artin monoid is the monoid $\B^+_4$ of positive braids on four strands, with generators $r,s,t$ satisfying the relations
\[
rsr = srs,\;\; \ rt=tr \;\; \text{and} \;\; sts = tst. 
\] 
The aim of this article is to push further Artin's presentation and study the relations between the braid relations. A coherent presentation of a monoid (or more generally of a category) consists of a set of generators, a set of generating relations and some coherence conditions. These coherence conditions can be thought of as elements of a homotopy basis of a $2$-dimensional CW-complex associated to the presentation. In the case of the braid monoid $\B^+_4$ on $4$ strands, Deligne~\cite{Deligne97} notes that the homotopy basis associated to Artin's presentation contains only one element whose boundary consists of the reduced expressions graph of the element of maximal length in $\S_4$ (this graph can be seen in Subsection~\ref{Subsection:Zamolodchikov}). 

Such a graph can be considered for any element $w$ in $\W$. The vertices are the reduced expressions of $w$ and two such are linked by an edge if one is obtained from the other by a braid relation.
In~\cite{Tits81}, Tits proves that the fundamental group of the reduced expressions graph is generated by two types of loops in the graph, the most interesting ones are associated to finite parabolic subgroups of rank $3$ of~$\W$. Actually, for the purpose of finding generators for the homotopy basis of $\B^+(\W)$ associated to Artin's presentation, the generators of the first type are degenerate and part of the generators of the second type are superfluous. The main result of our paper, Theorem~\ref{Theorem:ArtinCoherentPresentation}, states that there exists exactly one nondegenerate generator of the homotopy basis for every finite parabolic subgroup of rank $3$ of $\W$.

We now give some more details on the techniques we are using.
The notion of coherent presentation is formalised in terms of \emph{polygraphs}, which are presentations of higher-dimensional categories introduced by Burroni in~\cite{Burroni93}, and by Street in~\cite{Street76} under the name of \emph{computad}. 
A \emph{$2$-polygraph} corresponds to a presentation of a monoid by a rewriting system, that is a presentation by generators ($1$-cells) and oriented relations ($2$-cells). For example, Artin's presentation of $\B^+_4$ has three generating $1$-cells $r,s,t$ and three generating $2$-cells
\[
rsr \Rightarrow srs,\;\; \ rt\Rightarrow tr \;\;\text{and} \;\; sts \Rightarrow tst.
\] 
In \cite{GuiraudMalbos12}, the last two authors have introduced the notion of \emph{$(3,1)$-polygraph} as a presentation extended by $3$-cells on the $2$-category defined by the congruence generated by the presentation. A coherent presentation is then a $(3,1)$-polygraph such that the extension is a homotopy basis. We recall all these notions in Section~\ref{Section:CoherentPresentations}.

To obtain coherent presentations for monoids, in Section~\ref{Section:CompletionReduction}, we develop a homotopical completion-reduction method that is based on Squier's and Knuth-Bendix's completions. 
The completion-reduction is given in terms of Tietze transformations, known for presentations of groups~\cite{Tietze08,LyndonSchupp77}, here defined for $(3,1)$-polygraphs.
More precisely, we extend Squier's completion to terminating $2$-polygraphs thanks to Knuth-Bendix's completion~\cite{KnuthBendix70}. This is a classical construction of rewriting theory, similar to Buchberger's algorithm for computing Gröbner bases~\cite{Buchberger65}. The procedure transforms a terminating $2$-polygraph $\Sigma$ into a convergent one by adding to $\Sigma$ a potentially infinite number of $2$-cells so that every critical branching is confluent. Confluence of a $2$-polygraph means that every time two $2$-cells share the same source but two different targets, there exist two $2$-cells having those different $1$-cells as source and the same target. 
So, we end up with a $(3,1)$-polygraph $\Sr(\Sigma)$ where every critical branching has given a $3$-cell in the homotopy basis. Since the $2$-polygraph we started with presents the monoid, $\Sr(\Sigma)$ is a coherent presentation of this monoid. 
Next, we introduce \emph{homotopical reduction} as a general construction to coherently eliminate unnecessary cells in a coherent presentation. The $(3,1)$-polygraph $\Sr(\Sigma)$ has usually more cells than one could expect. For example, one can eliminate the pairs of redundant $2$-cells and collapsible $3$-cells adjoined by homotopical completion for nonconfluent critical branchings. Some of the remaining $3$-cells may also be redundant: one way to detect them is to compute the $3$-spheres associated to the triple critical branchings of the presentation. Let us mention that the two last authors and Mimram have applied those methods to compute coherent presentations of plactic and Chinese monoids in~\cite{GuiraudMalbosMimram13}.

In Section~\ref{Section:GarsideCoherentPresentation}, we use the homotopical completion-reduction method to get a coherent presentation $\Gar_3(\W)$ of the Artin monoid $\B^+(\W)$. The starting presentation is Garside's presentation, denoted by $\Gar_2(\W)$. It has the elements of $\W\setminus\ens{1}$ as generators and the relations are
\[
u|v \:=\: uv \qquad\quad\text{if \;\; $l(uv)=l(u)+l(v)$.}
\]
The notation $\cdot|\cdot$ stands for the product in the free monoid over $\W\setminus\ens{1}$ and~$l(u)$ is the length of~$u$ in~$\W$.
The resulting coherent presentation $\Gar_3(\W)$, that we obtain in Theorem~\ref{Theorem:GarsideCoherentPresentation}, corresponds to the coherence data given by Deligne in~\cite[Theorem~1.5]{Deligne97}.
We generalise our construction to Garside monoids, so that we are able to associate to every Garside monoid $\M$ a coherent presentation $\Gar_3(\M)$ (see Theorem~\ref{Theorem:GarsideGarside}).

In Section~\ref{Section:ArtinCoherentPresentation}, we homotopically reduce Garside's coherent presentation $\Gar_3(\W)$ into the smaller coherent presentation $\Art_3(\W)$ associated with  Artin's presentation of the monoid $\B^+(\W)$.  
The homotopy basis of $\Gar_3(\W)$ boils down to one $3$-cell $Z_{r,s,t}$ for all elements $t>s>r$ of~$S$ such that the subgroup of~$\W$ they span is finite.
To sum up, Theorem~\ref{Theorem:ArtinCoherentPresentation} says that the coherent presentation $\Art_3(\W)$ has exactly one $k$-cell, $0\leq k\leq 3$, for every subset $I$ of $S$ of rank $k$ such that the subgroup $\W_I$ is finite. The precise shape of the $3$-cells is given in~\ref{Subsection:Zamolodchikov}. 

As an application, in Theorem~\ref{Theorem:CoherentAction}, we prove that if $\Sigma$ is a coherent presentation of a monoid~$\M$, then the category~$\Act(\M)$ of actions of~$\M$ on categories is equivalent to the category of $2$-functors from the associated $(2,1)$-category~$\tck{\Sigma}$ to~$\Cat$ that send the elements of the homotopy basis to commutative diagrams. In~\cite[Theorem~1.5]{Deligne97}, Deligne already observes that this equivalence holds for Garside's presentation of spherical Artin monoids. 
The constructions are described in the homotopical setting of the canonical model structure on $2$-categories given by Lack~\cite{Lack02,Lack04}.
In this spirit, as a byproduct of our main theorem, to determine the action of an Artin monoid on a category, it suffices to attach to any generating $1$-cell $s\in S$ an endofunctor $T(s)$ and to any generating $2$-cell a natural isomorphism, such that these satisfy coherence relations given by the Tits-Zamolodchikov $3$-cells.

Finally, let us remark that, in~\cite[Theorem~4.5.3]{GuiraudMalbos12}, Squier's completion is extended in higher dimensions to produce \emph{polygraphic resolutions} of monoids, of which coherent presentations form the first three dimensions. From that point of view, the present work is a first step towards the construction of polygraphic resolutions $\Gar_*(\W)$ and $\Art_*(\W)$ of Artin monoids, extending the coherent presentations $\Gar_3(\W)$ and $\Art_3(\W)$. 
Moreover, the relationship between those resolutions and the higher categorical constructions in~\cite{ManinSchechtman89} should be explored. Further, the abelian resolutions obtained from $\Gar_*(\W)$ and $\Art_*(\W)$ by~\cite[Theorem~5.4.3]{GuiraudMalbos12} should be related to the abelian resolutions introduced in~\cite{DehornoyLafont03}. 

%%%
\subsubsection*{Acknowledgments}
The authors wish to thank Pierre-Louis Curien, Kenji Iohara, François Métayer, Samuel Mimram, Timothy Porter and the anonymous referee for fruitful exchanges and meaningful suggestions. This work has been partially supported by the project \emph{Cathre}, ANR-13-BS02-0005-02.

%%%%%%%%%%%%%%%%%%%%%%%%%%%%%%%%%%%
%%%%%%%%%%%%%%%%%%%%%%%%%%%%%%%%%%%
\section{Coherent presentations of categories}
\label{Section:CoherentPresentations}
%%%%%%%%%%%%%%%%%%%%%%%%%%%%%%%%%%%
%%%%%%%%%%%%%%%%%%%%%%%%%%%%%%%%%%%

%%%%%%%%%%%%%%%%%%%%%%%%%%%%%%%%%%%%%%%%%%%%
\subsection{Higher-dimensional categories}

If $\Cr$ is an $n$-category (we always consider strict, globular $n$-categories), we denote by $\Cr_k$ the set (and the $k$-category) of $k$-cells of~$\Cr$. If $f$ is a $k$-cell of $\Cr$, then $s_i(f)$ and $t_i(f)$ respectively denote the $i$-source and $i$-target of $f$; we drop the suffix~$i$ if $i=k-1$. The source and target maps satisfy the \emph{globular relations:} 
\[
s_i\circ s_{i+1} \:=\: s_i\circ t_{i+1} 
\qquad\text{and}\qquad
t_i\circ s_{i+1} \:=\: t_i\circ t_{i+1}.
\]
We respectively denote by $f:u\fl v$, $\;f:u\dfl v$, $\;f:u\tfl v\;$ and $\;f:u\qfl v\;$ a $1$-cell, a $2$-cell, a $3$-cell and a $4$-cell $f$ with source~$u$ and target~$v$. If $f$ and $g$ are $i$-composable $k$-cells, that is if $t_i(f)=s_i(g)$, we denote by $f\star_i g$ their $i$-composite; we simply write $fg$ if $i=0$. The compositions satisfy the \emph{exchange relations} given, for every $i\neq j$ and all possible cells $f$, $g$, $h$ and $k$, by
\[
(f \star_i g) \star_j (h \star_i k) \:=\: (f \star_j h) \star_i (g \star_j k).
\]
If $f$ is a $k$-cell, we denote by $1_f$ its identity $(k+1)$-cell. If $1_f$ is composed with cells of dimension $k+1$ or higher, we simply denote it by~$f$; for example, we write $ufv$ and $ufvgw$ instead of $1_u\star_0 f \star_0 1_v$ and $1_u\star_0 f \star_0 1_v \star_0 g \star_0 1_w$ for $1$-cells $u$, $v$ and $w$ and $2$-cells $f$ and $g$.

%%%%%%%%%%%%%%%%%%%%%%%%%%%%%%%%%%%%%%%%%%%%
\subsubsection{\pdf{(n,p)}-categories}

In an $n$-category~$\Cr$, a $k$-cell $f$, with source $x$ and target $y$, is \emph{invertible} if there exists a $k$-cell $f^-$ in $\Cr$, with source $y$ and target $x$ in $\Cr$, called the \emph{inverse of $f$}, such that
\[
f\star_{k-1} f^- \:=\: 1_x 
\qquad\text{and}\qquad
f^-\star_{k-1} f \:=\: 1_y.
\]
An \emph{$(n,p)$-category} is an $n$-category whose $k$-cells are invertible for every $k>p$. In particular, an $(n,n)$-category is an ordinary $n$-category and an $(n,0)$-category is an $n$-groupoid. 

%%%%%%%%%%%%%%%
\subsubsection{Spheres}

Let $\Cr$ be an $n$-category. A \emph{$0$-sphere of $\Cr$} is a pair $\gamma=(f,g)$ of $0$-cells of $\Cr$ and, for $1\leq k\leq n$, a \emph{$k$-sphere of $\Cr$} is a pair $\gamma=(f,g)$ of parallel $k$-cells of $\Cr$, \ie, with $s(f)=s(g)$ and $t(f)=t(g)$. We call $f$ the \emph{source} of $\gamma$ and~$g$ its \emph{target} and we write $s(\gamma)=f$ and $t(\gamma)=g$. If $f$ is a $k$-cell of $\Cr$, for $1\leq k\leq n$, the \emph{boundary of $f$} is the $(k-1)$-sphere $(s(f),t(f))$. 

%%%%%%%%%%%%%%%%%%%%%%%%%%%%%%%%%%%%%%%%%%%
\subsubsection{Cellular extensions}

Let $\Cr$ be an $n$-category. A \emph{cellular extension of $\Cr$} is a set $\Gamma$ equipped with a map from $\Gamma$ to the set of $n$-spheres of $\Cr$, whose value on $\gamma$ is denoted by $(s(\gamma),t(\gamma))$. 
By considering all the formal compositions of elements of~$\Gamma$, seen as $(n+1)$-cells with source and target in~$\Cr$, one builds the \emph{free $(n+1)$-category generated by~$\Gamma$ over $\Cr$}, denoted by $\Cr[\Gamma]$. 
The \emph{quotient of $\Cr$ by $\Gamma$}, denoted by~$\Cr/\Gamma$, is the $n$-category one gets from $\Cr$ by identification of the $n$-cells $s(\gamma)$ and $t(\gamma)$, for every $n$-sphere~$\gamma$ of~$\Gamma$. 
If $\Cr$ is an $(n,1)$-category and $\Gamma$ is a cellular extension of $\Cr$, then the \emph{free $(n+1,1)$-category generated by~$\Gamma$ over $\Cr$} is denoted by~$\Cr(\Gamma)$ and defined as follows: 
\[
\Cr(\Gamma) \:=\: \Cr [ \Gamma,\, \check{\Gamma} ] \big/ \; \Inv(\Gamma) 
\]
where $\check{\Gamma}$ contains the same $(n+1)$-cells as $\Gamma$, with source and target reversed, and $\Inv(\Gamma)$ is the cellular extension of $\Cr[\Gamma,\check{\Gamma}]$ made of two $(n+2)$-cells 
\[
\check{x}\star_{n}x \:\ofl{\lambda_{x}}\: 1_{t(x)}
\qquad\text{and}\qquad
x\star_{n}\check{x} \:\ofl{\rho_{x}}\: 1_{s(x)}
\]
for each $(n+1)$-cell $x$ of $\Gamma$.

%%%%%%%%%%%%%%%%%%%%%%%%%%%%%%%%%%%%%%%%%%%
\subsubsection{Homotopy bases}

Let $\Cr$ be an $n$-category. A \emph{homotopy basis of $\Cr$} is a cellular extension~$\Gamma$ of~$\Cr$ such that, for every $n$-sphere $\gamma$ of $\Cr$, there exists an $(n+1)$-cell with boundary~$\gamma$ in~$\Cr(\Gamma)$ or, equivalently, if the quotient $n$-category $\Cr/\Gamma$ has $n$-spheres of shape $(f,f)$ only. For example, the $n$-spheres of $\Cr$ form a homotopy basis of~$\Cr$.

%%%%%%%%%%%%%%%%%%%%%%%%%%%%%%%%%%%%%%%%%%%
\subsection{Coherent presentations of categories}
%%%%%%%%%%%%%%%%%%%%%%%%%%%%%%%%%%%%%%%%%%%

%%%%%%%%%%%%%%%%%%%%%%%%%%%%
\subsubsection{Polygraphs}

A \emph{$1$-polygraph} is a pair $\Sigma=(\Sigma_0,\Sigma_1)$ made of a set $\Sigma_0$ and a cellular extension $\Sigma_1$ of $\Sigma_0$. The free category $\Sigma^*$ over $\Sigma$ is $\Sigma^*=\Sigma_0[\Sigma_1]$. A \emph{$2$-polygraph} is a triple $\Sigma=(\Sigma_0, \Sigma_1,\Sigma_2)$ where $(\Sigma_0,\Sigma_1)$ is a $1$-polygraph and $\Sigma_2$ is a cellular extension of the free category $\Sigma^*_1$. The free $2$-category $\Sigma^*$ over $\Sigma$, the free $(2,1)$-category $\tck{\Sigma}$ over $\Sigma$ and the category~$\cl{\Sigma}$ presented by $\Sigma$ are respectively defined by
\[
\Sigma^* \:=\: \Sigma_1^*[\Sigma_2]\:,
\qquad
\tck{\Sigma} \:=\: \Sigma_1^*(\Sigma_2)
 \qquad\text{and}\qquad
\cl{\Sigma} \:=\: \Sigma_1^*/\Sigma_2.
\]
A \emph{$(3,1)$-polygraph} is a pair $\Sigma=(\Sigma_2,\Sigma_3)$ made of a $2$-polygraph $\Sigma_2$ and a cellular extension $\Sigma_3$ of the free $(2,1)$-category $\tck{\Sigma}_2$. The free $(3,1)$-category $\tck{\Sigma}$ over $\Sigma$ and the $(2,1)$-category presented by $\Sigma$ are defined by
\[
\tck{\Sigma} \:=\: \tck{\Sigma}_2(\Sigma_3)
\qquad\text{and}\qquad
\cl{\Sigma} \:=\: \tck{\Sigma}_2/\Sigma_3.
\]
The category presented by a $(3,1)$-polygraph $\Sigma$ is the one presented by its underlying $2$-polygraph, namely $\cl{\Sigma}_2$. If $\Sigma$ is a polygraph, we identify its underlying $k$-polygraph $\Sigma_k$ and the set of \hbox{$k$-cells} of the corresponding cellular extension. We say that $\Sigma$ is \emph{finite} if it has finitely many cells in every dimension. A $(3,1)$-polygraph $\Sigma$ can be summarised by a diagram representing the cells and the source and target maps of the free $(3,1)$-category $\tck{\Sigma}$ it generates:
\[
\xymatrix@C=3em{
\Sigma_0
& \Sigma_1^*
	\ar@<.5ex> [l] ^-{t_0}
	\ar@<-.5ex> [l] _-{s_0}
& \tck{\Sigma}_2
	\ar@<.5ex> [l] ^-{t_1}
	\ar@<-.5ex> [l] _-{s_1}
& \tck{\Sigma}_3.
	\ar@<.5ex> [l] ^-{t_2}
	\ar@<-.5ex> [l] _-{s_2}
}
\]

%%%%%%%%%%%%%%%%%%%%%%%%%
\subsubsection{Coherent presentations of categories}

Let $\C$ be a category. A \emph{presentation of $\C$} is a $2$-poly\-graph~$\Sigma$ whose presented category $\cl{\Sigma}$ is isomorphic to $\C$. We usually commit the abuse to identify $\C$ and~$\cl{\Sigma}$ and we denote by $\cl{u}$ the image of a $1$-cell $u$ of $\Sigma^*$ through the canonical projection onto $\C$. An \emph{extended presentation of $\C$} is a $(3,1)$-polygraph $\Sigma$ whose presented category is isomorphic to $\C$. A \emph{coherent presentation of $\C$} is an extended presentation $\Sigma$ of $\C$ such that the cellular extension $\Sigma_3$ of $\tck{\Sigma}_2$ is a homotopy basis. 

%%%%%%%%%%%%%%%%%%%%%%%%%%%%%
\begin{example}[The standard coherent presentation]
\label{Example:StandardCoherentPresentation}

The \emph{standard presentation} $\Std_2(\C)$ of a category $\C$ is the $2$-polygraph whose cells are
\begin{itemize}
\item the $0$-cells of $\C$ and a $1$-cell $\rep{u}:x\fl y$ for every $1$-cell $u:x\fl y$ of $\C$,
\item a $2$-cell $\gamma_{u,v}:\rep{u}\rep{v}\dfl\rep{uv}$ for all composable $1$-cells $u$ and $v$ of $\C$, 
\item a $2$-cell $\iota_x:1_x\dfl\rep{1}_x$ for every $0$-cell $x$ of $\C$.
\end{itemize}
The \emph{standard coherent presentation} $\Std_3(\C)$ of $\C$ is $\Std_2(\C)$ extended with the following $3$-cells
\[
\vcenter{
\xymatrix @C=3em @R=1em {
& {\rep{uv}\rep{w}}
	\ar@2 @/^/ [dr] ^-{\gamma_{uv,w}}
\\
{\rep{u}\rep{v}\rep{w}}
	\ar@2 @/^/ [ur] ^-{\gamma_{u,v}\rep{w}}
	\ar@2 @/_/ [dr] _-{\rep{u}\gamma_{v,w}}
&& {\rep{uvw}}
\\
& {\rep{u}\rep{vw}}
	\ar@2 @/_/ [ur] _-{\gamma_{u,vw}}
\ar@3 "1,2"!<0pt,-15pt>;"3,2"!<0pt,15pt> ^-{\alpha_{u,v,w}}
}
}
\qquad
\vcenter{
\xymatrix{
& {\rep{1}_x\rep{u}}
	\ar@2@/^/ [dr] ^-{\gamma_{1_x,u}}
\\
{\rep{u}}
	\ar@2@/^/ [ur]  ^{\iota_x \rep{u}}
	\ar@2{=}@/_/ [rr] _-{}="tgt"
&& {\rep{u}}
\ar@3 "1,2"!<0pt,-15pt>;"tgt"!<0pt,10pt> ^-{\lambda_u}	
}
}
\qquad
\vcenter{
\xymatrix{
& {\rep{u}\rep{1}_y}
	\ar@2@/^/ [dr] ^-{\gamma_{u,1_y}}
\\
{\rep{u}}
	\ar@2@/^/ [ur] ^-{\rep{u}\iota_y}
	\ar@2{=}@/_/ [rr] _-{}="tgt"
&& {\rep{u}}
\ar@3 "1,2"!<0pt,-15pt>;"tgt"!<0pt,10pt> ^-{\rho_u}	
}
}
\] 
where $u:x\fl y$, $v:y\fl z$ and $w:z\fl t$ range over the $1$-cells of $\C$. It is well known that those $3$-cells form a homotopy basis of $\tck{\Std_2(\C)}$, see~\cite[Chap.~VII, \textsection~2, Corollary]{MacLane98}.
\end{example}

%%%%%%%%%%%%%%%%%%%%%%%%%%%%%%%%%%
\subsection{Cofibrant approximations of \pdf{2}-categories}
%%%%%%%%%%%%%%%%%%%%%%%%%%%%%%%%%%

Let us recall the model structure for $2$-categories given by Lack in~\cite{Lack02} and~\cite{Lack04}.
A $2$-category is \emph{cofibrant} if its underlying $1$-category is free.
A $2$-functor $F:\Cr\fl\Dr$ is a \emph{weak equivalence} if it satisfies the following two conditions:
\begin{itemize}
\item every $0$-cell $y$ of $\Dr$ is equivalent to a $0$-cell $F(x)$ for $x$ in $\Cr$, \ie, there exist $1$-cells $u:F(x)\fl y$ and $v:y\fl F(x)$ and invertible $2$-cells $f:u\star_1 v \dfl 1_{F(x)}$ and $g:v\star_1 u\dfl 1_{y}$ in $\Dr$;
\item for all $0$-cells $x$ and $x'$ in $\Cr$, the induced functor $F(x,x'):\Cr(x,x')\fl\Dr(F(x),F(x'))$ is an equivalence of categories.
\end{itemize}
In particular, an equivalence of $2$-categories is a weak equivalence. More generally, a $2$-functor is a weak equivalence $F:\Cr\fl\Dr$ if, and only if, there exists a pseudofunctor $G:\Dr\fl\Cr$, see Section~\ref{Section:Actions}, that is a quasi-inverse for $F$, \emph{i.e.}, such that $GF\simeq 1_{\Cr}$ and $FG\simeq 1_{\Dr}$.

If $\Cr$ is a $2$-category, a \emph{cofibrant approximation of $\Cr$} is a cofibrant $2$-category $\tilde{\Cr}$ that is weakly equivalent to~$\Cr$.

%%%%%%%%%%%%%%%%%%%%%%%%%%%%%%%%%%%%%%%%%%%%%%%
\begin{theorem}
\label{Theorem:CoherentPresentationsCofibrantApproximations}
Let $\C$ be a category and let $\Sigma$ be an extended presentation of $\C$. The following assertions are equivalent:
\begin{enumerate}[\bf i)]
\item the $(3,1)$-polygraph $\Sigma$ is a coherent presentation of $\C$;
\item the $(2,1)$-category $\cl{\Sigma}$ presented by $\Sigma$ is a cofibrant approximation of $\C$. 
\end{enumerate}
\end{theorem}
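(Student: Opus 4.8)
The plan is to show the two implications separately, exploiting the fact that, since $\Sigma$ is an extended presentation of $\C$, the underlying $1$-category of $\cl{\Sigma}$ is the free category $\Sigma_1^*$, hence $\cl{\Sigma}$ is automatically a cofibrant $2$-category. Thus the content of assertion ii) is entirely the assertion that $\cl{\Sigma}$ is \emph{weakly equivalent} to $\C$; more precisely, one checks that the canonical projection $\pi:\cl{\Sigma}\fl\C$ (identity on $0$-cells, sending a $1$-cell to its class, collapsing all $2$-cells since $\C$ is a $1$-category viewed as a $2$-category with only identity $2$-cells) is the natural candidate weak equivalence. Since $\pi$ is the identity on $0$-cells and bijective on them, the first condition for a weak equivalence is trivial, and the whole statement reduces to: $\Sigma$ is a coherent presentation of $\C$ if and only if, for all $0$-cells $x,x'$, the functor $\pi(x,x'):\cl{\Sigma}(x,x')\fl\C(x,x')$ is an equivalence of categories.

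First I would treat the direction i) $\Rightarrow$ ii). Because $\Sigma$ presents $\C$, the functor $\pi(x,x')$ is bijective on isomorphism classes of objects — indeed surjective on objects since every $1$-cell of $\C$ lifts to $\Sigma_1^*$, and the identifications it makes on objects are exactly those forced by $2$-cells, so it is injective on $2$-cell-connected components, which are the isomorphism classes. It remains to show $\pi(x,x')$ is full and faithful. Fullness is immediate since $\C(x,x')$ is discrete (only identities). For faithfulness, one must show that any two parallel $2$-cells $f,g:u\dfl v$ in $\tck{\Sigma}_2$ become equal in $\cl{\Sigma}=\tck{\Sigma}_2/\Sigma_3$; but $(f,g)$ is a $2$-sphere of $\tck{\Sigma}_2$, and since $\Sigma_3$ is a homotopy basis there is a $3$-cell of $\tck{\Sigma}_2(\Sigma_3)$ with boundary $(f,g)$, which is exactly the statement that $f=g$ in the quotient. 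Hence $\pi(x,x')$ is a fully faithful, essentially surjective functor, i.e. an equivalence, so $\pi$ is a weak equivalence and $\cl{\Sigma}$ is a cofibrant approximation of $\C$.

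For ii) $\Rightarrow$ i), I would run the same argument in reverse. Assuming $\cl{\Sigma}$ is a cofibrant approximation, $\pi$ is a weak equivalence, so each $\pi(x,x')$ is an equivalence, in particular faithful. Given any $2$-sphere $\gamma=(f,g)$ of $\tck{\Sigma}_2$, faithfulness of $\pi(x,x')$ forces $f$ and $g$ to be identified in $\cl{\Sigma}=\tck{\Sigma}_2/\Sigma_3$; by the definition of the quotient of a $(2,1)$-category by a cellular extension, this means precisely that there is a $3$-cell with boundary $\gamma$ in $\tck{\Sigma}_2(\Sigma_3)$. Since $\gamma$ was arbitrary, $\Sigma_3$ is a homotopy basis of $\tck{\Sigma}_2$, which together with the hypothesis that $\Sigma$ presents $\C$ says exactly that $\Sigma$ is a coherent presentation of $\C$.

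The main obstacle I anticipate is the careful bookkeeping at the level of the quotient $(2,1)$-category: one must verify that "$f=g$ in $\tck{\Sigma}_2/\Sigma_3$" is genuinely equivalent to "there exists a $3$-cell of $\tck{\Sigma}_2(\Sigma_3)$ with boundary $(f,g)$", i.e. that the congruence on $2$-cells generated by $\Sigma_3$ is detected by, and detects, the existence of such $3$-cells. This is the homotopical analogue of the classical fact that the congruence generated by a set of relations is witnessed by rewriting paths; here it uses that $\Sigma_3$ together with its formal inverses and the cells of $\Inv(\Sigma_3)$ generate the $(3,1)$-category, so that any zig-zag of $3$-cells between $f$ and $g$ can be composed (using $\star_2$ and the invertibility of $3$-cells) into a single $3$-cell with boundary $(f,g)$. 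The remaining verifications — that $\cl{\Sigma}$ is cofibrant, that $\pi$ is well-defined, that essential surjectivity and fullness hold — are routine consequences of $\Sigma$ being a presentation of $\C$.
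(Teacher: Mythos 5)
Your forward direction is essentially the paper's argument: $\cl{\Sigma}$ is cofibrant because its underlying $1$-category is free, and the canonical projection $\pi$ restricts to an equivalence on each hom-category because $\Sigma$ presents $\C$ (essential surjectivity, fullness) and because the homotopy basis forces all parallel $2$-cells of $\cl{\Sigma}$ to be equal (faithfulness). The paper phrases this by exhibiting an explicit quasi-inverse $\iota$ and a natural isomorphism between $\iota\pi$ and the identity, but the content is the same. One small imprecision: fullness is not ``immediate since $\C(x,x')$ is discrete'' — you also need that $\pi(u)=\pi(v)$ forces the existence of a $2$-cell $u\dfl v$ in $\cl{\Sigma}$, which is the fact that $\Sigma_2$ generates the congruence presenting $\C$; you invoke exactly this for injectivity on isomorphism classes, so the ingredient is present, just attached to the wrong step.

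The converse direction has a genuine gap. Hypothesis ii) only says that \emph{some} weak equivalence between $\cl{\Sigma}$ and $\C$ exists; it does not say that the canonical projection $\pi$ is one, which is what you assert when you write ``$\pi$ is a weak equivalence, so each $\pi(x,x')$ is \dots{} faithful''. Establishing that $\pi$ itself is a weak equivalence under ii) is exactly as hard as proving i) (it amounts to showing parallel $2$-cells of $\cl{\Sigma}$ coincide), so as written the step is circular. The repair is short and is what the paper does: take the given weak equivalence $F:\cl{\Sigma}\fl\C$, whatever it is; for parallel $2$-cells $f,g:u\dfl v$ one has $F(f)=F(g)=1_{F(u)}$ because $\C$ has only identity $2$-cells, and since $F(x,y)$ is an equivalence of hom-categories it is faithful (the paper instead writes out the naturality squares of a chosen quasi-inverse $G$ and the isomorphism $GF\dfl 1$, which amounts to the same thing), whence $f=g$ in $\cl{\Sigma}$ and $\Sigma_3$ is a homotopy basis. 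With that substitution your argument is complete.
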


\begin{proof}
Let us assume that $\Sigma_3$ is a homotopy basis of $\tck{\Sigma}_2$. By definition, the $2$-category $\cl{\Sigma}$ is cofibrant. Let us check that it is weakly equivalent to $\C$. We consider the canonical projection $\pi:\tck{\Sigma}\pfl\C$ that sends every $0$-cell to itself, every $1$-cell to its equivalence class and every $2$-cell and $3$-cell to the corresponding identity. This is well defined since two $1$-cells of $\tck{\Sigma}_2$ have the same equivalence class in $\C$ if, and only if, there exists a $2$-cell between them in $\tck{\Sigma}_2$ and since parallel $2$-cells of~$\tck{\Sigma}$ are sent to the same (identity) $2$-cell of $\C$. 

Since $\pi$ is the identity on $0$-cells, it is sufficient to check that it induces an equivalence of categories between $\cl{\Sigma}(x,y)$ and $\C(x,y)$ for all $0$-cells $x$ and $y$ in $\C$. We define a quasi-inverse~$\iota$ by choosing, for each $1$-cell $u:x\fl y$ of $\C$, an arbitrary representative $1$-cell $\iota(u)$ in $\cl{\Sigma}$. By construction, we have that~$\pi\iota$ is the identity of $\C(x,y)$. Moreover, for every $1$-cell $u:x\fl y$ of $\cl{\Sigma}$, the $1$-cell $\iota\pi(u)$ is a $1$-cell of $\cl{\Sigma}$ from $x$ to $y$ that has the same equivalence class as $u$: we choose an arbitrary $2$-cell $\alpha_u:u\dfl \iota\pi(u)$ in $\cl{\Sigma}$. Since all the parallel $2$-cells of $\cl{\Sigma}$ are equal, we get the following commutative diagram for every $2$-cell $f$ of $\cl{\Sigma}$:
\[
\xymatrix@R=1em @!C {
& {\iota\pi(u)}
	\ar@{=} @/^/ [dr] ^-{\iota\pi(f)} 
\\
u 
	\ar@2 @/^/ [ur] ^-{\alpha_u} 
	\ar@2 @/_/ [dr] _-{f}
		\ar@{} [rr] |{=}
&& {\iota\pi(v)}
\\
& v 
	\ar@2 @/_/ [ur] _-{\alpha_v}
}
\]
This proves that $\alpha$ is a natural isomorphism between $\iota\pi$ and the identity of $\cl{\Sigma}(x,y)$, yielding that~$\pi$ is a weak equivalence and, as a consequence, that $\cl{\Sigma}$ is a cofibrant approximation of $\C$. 

Conversely, let us assume that $\cl{\Sigma}$ is a cofibrant approximation of $\C$. Let $F:\cl{\Sigma}\fl\C$ be a weak equivalence and let $f,g:u\dfl v:x\fl y$ be parallel $2$-cells of $\tck{\Sigma}$. Since $F$ is a $2$-functor and $\C$ has identity $2$-cells only, we must have $F(u)=F(v)$ and $F(f)=F(g)=1_{F(u)}$. By hypothesis, the $2$-functor~$F$ induces an equivalence of categories between $\cl{\Sigma}(x,y)$ and $\C(x,y)$: we choose a quasi-inverse $G$ and a natural isomorphism $\alpha$ between $GF$ and the identity of $\cl{\Sigma}(x,y)$. We write the naturality conditions for $f$ and $g$ and, using $GF(f)=GF(g)=1_{GF(u)}$, we conclude that $f$ and $g$ are equal in $\cl{\Sigma}$:
\[
\xymatrix@R=1em @!C {
& {GF(u)}
	\ar@{=} @/^/ [dr] ^-{GF(f)} 
\\
u 
	\ar@2 @/^/ [ur] ^-{\alpha_u} 
	\ar@2 @/_/ [dr] _-{f}
		\ar@{} [rr] |{=}
&& {GF(v)}
\\
& v 
	\ar@2 @/_/ [ur] _-{\alpha_v}
}
\qquad\qquad
\xymatrix@R=1em @!C {
& {GF(u)}
	\ar@{=} @/^/ [dr] ^-{GF(g)} 
\\
u 
	\ar@2 @/^/ [ur] ^-{\alpha_u} 
	\ar@2 @/_/ [dr] _-{g}
		\ar@{} [rr] |{=}
&& {GF(v)}
\\
& v 
	\ar@2 @/_/ [ur] _-{\alpha_v}
}
\]
Thus $\Sigma$ is a coherent presentation of $\C$.
\end{proof}

%%%%%%%%%%%%%%%%%%%%%%%%%%%%%%%%%%%%%%%%
\begin{remark}
The cofibrant approximations of a category $\C$ form, in general, a strictly larger class than the $2$-categories presented by coherent presentations of $\C$. Indeed, let $\C$ be the terminal category: it contains one $0$-cell and the corresponding identity $1$-cell only. Then $\C$ is cofibrant and, as a consequence, it is a cofibrant approximation of itself: this corresponds to the coherent presentation of $\C$ given by the $(3,1)$-polygraph with one $0$-cell and no higher-dimensional cells. But $\C$ also admits, as a cofibrant approximation, the ``equivalence'' $2$-category with two $0$-cells $x$ and $y$, two $1$-cells $u:x\fl y$ and $v:y\fl x$ and two invertible $2$-cells $f:uv\dfl 1_x$ and $g:vu\dfl 1_y$, and this $2$-category is not presented by a coherent presentation of $\C$, since it does not have the same $0$-cells as $\C$. 
\end{remark}

%%%%%%%%%%%%%%%%%%%%%%%%%%%%%%%%%%%%%%%%
\begin{example}[{The standard cofibrant approximation~\cite{Lack02}}]
\label{Example:StandardCofibrantApproximation}

For any $2$-category $\Cr$, we denote by~$\rep{\Cr}$ the cofibrant $2$-category with the same $0$-cells as $\Cr$ and the following higher cells:
\begin{itemize}
\item the $1$-cells of $\rep{\Cr}$ are freely generated by the ones of $\Cr$, with $u$ in $\Cr$ denoted by $\rep{u}$ when seen as a generator of $\rep{\Cr}$;
\item the $2$-cells from $\rep{u}_1\cdots\rep{u}_m$ to $\rep{v}_1\cdots\rep{v}_n$ in $\rep{\Cr}$ are the $2$-cells from $u_1\cdots u_m$ to $v_1\cdots v_n$ in $\Cr$, with the same compositions as in $\Cr$.
\end{itemize}
The canonical projection $\rep{\Cr}\pfl\Cr$ is the identity on $0$-cells and maps each generating $1$-cell~$\rep{u}$ to~$u$ and each $2$-cell to itself: this is a weak equivalence whose quasi-inverse lifts a $2$-cell $f:u\dfl v$ to its distinguished representative $\rep{f}:\rep{u}\dfl\rep{v}$. Hence, the $2$-category $\rep{\Cr}$ is a cofibrant approximation of $\Cr$, called the \emph{standard cofibrant approximation of $\Cr$}. 

When $\Cr=\C$ is a category, the $2$-category $\rep{\C}$ has exactly one $2$-cell from $\rep{u}_1\cdots\rep{u}_m$ to $\rep{v}_1\cdots\rep{v}_n$ if, and only if, the relation $u_1\cdots u_m = v_1\cdots v_n$ holds in $\C$: this is a representative of an identity and, thus, it is invertible. As a consequence, the standard cofibrant approximation $\rep{\C}$ of $\C$ is exactly the $(2,1)$-category presented by the standard coherent presentation $\Std_3(\C)$ of $\C$.
\end{example}

%%%%%%%%%%%%%%%%%%%%%%%%%%%%%%%%%%%%%%%%%%%%
%%%%%%%%%%%%%%%%%%%%%%%%%%%%%%%%%%%%%%%%%%%%
\section{Homotopical completion and homotopical reduction}
\label{Section:CompletionReduction}
%%%%%%%%%%%%%%%%%%%%%%%%%%%%%%%%%%%%%%%%%%%%
%%%%%%%%%%%%%%%%%%%%%%%%%%%%%%%%%%%%%%%%%%%%

%%%%%%%%%%%%%%%%%%%%%%%%%%%%%%%%%
\subsection{Tietze transformations of \pdf{(3,1)}-polygraphs}
\label{Subsection:TietzeTransformations}
%%%%%%%%%%%%%%%%%%%%%%%%%%%%%%%%%

An equivalence of $2$-categories $F:\Cr\fl\Dr$ is a \emph{Tietze equivalence} if the quotient categories $\Cr_1/\Cr_2$ and $\Dr_1/\Dr_2$ are isomorphic. Two $(3,1)$-polygraphs are \emph{Tietze-equivalent} if the $2$-categories they present are Tietze-equivalent. In that case, they have the same $0$-cells (up to a bijection). In particular, two coherent presentations of the same category are Tietze-equivalent.

%%%%%%%%%%%%% 
\subsubsection{Tietze transformations}

Let $\Sigma$ be a $(3,1)$-polygraph. Following the terminology of~\cite{Brown92}, a $2$-cell (resp. $3$-cell, resp. $3$-sphere) $\gamma$ of~$\Sigma$ is called \emph{collapsible} if it satisfies the following:
\begin{itemize}
\item the target of $\gamma$ is a $1$-cell (resp. $2$-cell, resp. $3$-cell) of the $(3,1)$-polygraph $\Sigma$,
\item the source of $\gamma$ is a $1$-cell (resp. $2$-cell, resp. $3$-cell) of the free $(3,1)$-category over $\Sigma\setminus\ens{t(\gamma)}$.
\end{itemize}
If $\gamma$ is collapsible, then its target is called a \emph{redundant} cell. A collapsible cell and its redundant target can be \emph{coherently} adjoined or removed from a $(3,1)$-polygraph, without changing the presented $2$-category, up to Tietze equivalence. These operations are formalised by Tietze transformations.

An \emph{elementary Tietze transformation} of a $(3,1)$-polygraph~$\Sigma$ is a $3$-functor with domain $\tck{\Sigma}$ that belongs to one of the following six operations:

\begin{enumerate}

\item Coherent adjunction or elimination of a redundant $1$-cell with its collapsible $2$-cell:
\[
\boxed{\xymatrix@C=5em{
\bullet
	\ar [r] ^-{u}
& \bullet
}}
\quad
\xymatrix@C=3em {
\strut 
	\ar@<1ex> [r] ^-{\iota_u} 
&\strut
	\ar@<1ex> [l] ^-{\pi_{\alpha}}
}
\quad
\boxed{\xymatrix@C=5em{
\bullet
	\ar@/^3ex/ [r] ^-{u} _-{}="src"
	\ar@/_3ex/ [r] _-{x} ^-{}="tgt"
\ar@2 "src"!<0pt,-10pt>;"tgt"!<0pt,10pt> ^-{\alpha}
& \bullet
}}
\]
The coherent adjunction $\iota_{u} : \tck{\Sigma} \ifl \tck{\Sigma}(x)(\alpha)$ is the canonical inclusion. The coherent elimination $\pi_{\alpha} : \tck{\Sigma} \pfl \tck{\Sigma}/\alpha$ maps $x$ to $u$ and $\alpha$ to $1_u$, leaving the other cells unchanged. The $(3,1)$-category $\tck{\Sigma}/\alpha$ is freely generated by the following $(3,1)$-polygraph $\Sigma/\alpha$:
\[
\xymatrix@C=4em{
\Sigma_0
& (\Sigma_1\setminus\ens{x})^*
	\ar@<.5ex> [l] ^-{t_0}
	\ar@<-.5ex> [l] _-{s_0}
& \tck{(\Sigma_2\setminus\ens{\alpha})}
	\ar@<.5ex> [l] ^-{\pi_{\alpha}\circ t_1}
	\ar@<-.5ex> [l] _-{\pi_{\alpha}\circ s_1}
& \tck{\Sigma}_3.
	\ar@<.5ex> [l] ^-{\pi_{\alpha}\circ t_2}
	\ar@<-.5ex> [l] _-{\pi_{\alpha}\circ s_2}
}
\]

\item Coherent adjunction or elimination of a redundant $2$-cell with its collapsible $3$-cell: 
\[
\boxed{\xymatrix@C=5em{
\bullet
	\ar@/^5ex/ [r] _-{}="src"
	\ar@/_5ex/ [r] ^-{}="tgt"
\ar@2 "src"!<0pt,-15pt>;"tgt"!<0pt,15pt> ^-*+{f}
& \bullet
}}
\quad
\xymatrix@C=3em {
\strut 
	\ar@<1ex> [r] ^-{\iota_f} 
&\strut
	\ar@<1ex> [l] ^-{\pi_{\gamma}}
}
\quad
\boxed{\xymatrix@C=5em{
\bullet
	\ar@/^5ex/ [r] _-{}="src"
	\ar@/_5ex/ [r] ^-{}="tgt"
\ar@2 "src"!<-12.5pt,-15pt>;"tgt"!<-12.5pt,15pt> _-*+{f} ^-{}="src2"
\ar@2 "src"!<12.5pt,-15pt>;"tgt"!<12.5pt,15pt> ^-*+{\alpha} _-{}="tgt2"
\ar@3 "src2"!<7.5pt,0pt>;"tgt2"!<-7.5pt,0pt> ^-*+{\gamma}
& \bullet
}}
\]
The coherent adjunction $\iota_{f} : \tck{\Sigma} \ifl \tck{\Sigma}(\alpha)(\gamma)$ is the canonical inclusion. The coherent elimination $\pi_{\gamma} : \tck{\Sigma} \pfl \tck{\Sigma}/\gamma$ maps $\alpha$ to $f$ and $\gamma$ to $1_f$, leaving the other cells unchanged. The $(3,1)$-category $\tck{\Sigma}/\gamma$ is freely generated by the following $(3,1)$-polygraph $\Sigma/\gamma$:
\[
\xymatrix@C=4em{
\Sigma_0
& \Sigma_1^*
	\ar@<.5ex> [l] ^-{t_0}
	\ar@<-.5ex> [l] _-{s_0}
& \tck{(\Sigma_2\setminus\ens{\alpha})}
	\ar@<.5ex> [l] ^-{t_1}
	\ar@<-.5ex> [l] _-{s_1}
& \tck{(\Sigma_3\setminus\ens{\gamma})}.
	\ar@<.5ex> [l] ^-{\pi_{\gamma}\circ t_2}
	\ar@<-.5ex> [l] _-{\pi_{\gamma}\circ s_2}
}
\]

\item Coherent adjunction or elimination of a redundant $3$-cell:
\[
\boxed{\xymatrix@C=5em{
\bullet
	\ar@/^5ex/ [r] _-{}="src"
	\ar@/_5ex/ [r] ^-{}="tgt"
\ar@2 "src"!<-12.5pt,-15pt>;"tgt"!<-12.5pt,15pt> _-*+{} ^-{}="src2"
\ar@2 "src"!<12.5pt,-15pt>;"tgt"!<12.5pt,15pt> ^-*+{} _-{}="tgt2"
\ar@3 "src2"!<7.5pt,0pt>;"tgt2"!<-7.5pt,0pt> ^-*+{A}
& \bullet
}}
\quad
\xymatrix@C=3em {
\strut 
	\ar@<1ex> [r] ^-{\iota_A} 
&\strut
	\ar@<1ex> [l] ^-{\pi_{(A,\gamma)}}
}
\quad
\boxed{\xymatrix@C=5em{
\bullet
	\ar@/^5ex/ [r] _-{}="src"
	\ar@/_5ex/ [r] ^-{}="tgt"
\ar@2 "src"!<-12.5pt,-15pt>;"tgt"!<-12.5pt,15pt> _-{} ^-{}="src2"
\ar@2 "src"!<12.5pt,-15pt>;"tgt"!<12.5pt,15pt> ^-{} _-{}="tgt2"
\ar@3 "src2"!<7.5pt,6pt>;"tgt2"!<-7.5pt,6pt> ^-*+{A}
\ar@3 "src2"!<7.5pt,-6pt>;"tgt2"!<-7.5pt,-6pt> _-*+{\gamma}
& \bullet
}}
\]
The coherent adjunction $\iota_{A} : \tck{\Sigma} \ifl \tck{\Sigma}(\gamma)$ is the canonical inclusion. The coherent elimination $\pi_{(A,\gamma)} : \tck{\Sigma} \pfl \tck{\Sigma}/(A,\gamma)$ maps $\gamma$ to $A$, leaving the other cells unchanged. The $(3,1)$-category $\tck{\Sigma}/(A,\gamma)$ is freely generated by the following $(3,1)$-polygraph $\Sigma/(A,\gamma)$:
\[
\xymatrix@C=4em{
\Sigma_0
& \Sigma_1^*
	\ar@<.5ex> [l] ^-{t_0}
	\ar@<-.5ex> [l] _-{s_0}
& \tck{\Sigma_2}
	\ar@<.5ex> [l] ^-{t_1}
	\ar@<-.5ex> [l] _-{s_1}
& \tck{(\Sigma_3\setminus\ens{\gamma})}.
	\ar@<.5ex> [l] ^-{t_2}
	\ar@<-.5ex> [l] _-{s_2}
}
\]
\end{enumerate}
If $\Sigma$ and $\Upsilon$ are $(3,1)$-polygraphs, a \emph{(finite) Tietze transformation from $\Sigma$ to $\Upsilon$} is a $3$-functor $F:\tck{\Sigma}\fl\tck{\Upsilon}$ that decomposes into a (finite) sequence of elementary Tietze transformations.

\begin{example}[The reduced standard coherent presentation]
\label{Example:ReducedStandardCoherentPresentation}
Let $\C$ be a category. One can reduce the standard coherent presentation $\Std_3(\C)$ of $\C$, given in Example~\ref{Example:StandardCoherentPresentation} into the smaller \emph{reduced standard coherent presentation} $\Std'_3(\C)$ of~$\C$. It is obtained from $\Std_3(\C)$ by a Tietze transformation that performs the following coherent eliminations, the resulting coherent presentation of the category $\C$ being detailed in~\cite[4.1.6]{GuiraudMalbos12}:
\begin{itemize}
\item the $3$-cells $\alpha_{1_x,u,v}$, $\alpha_{u,1_y,v}$ and $\alpha_{u,v,1_z}$, since they are parallel to composites of $\lambda$s and $\rho$s, 
\item the $2$-cells $\gamma_{1_x,u}$ and the $3$-cells $\lambda_u$,
\item the $2$-cells $\gamma_{u,1_x}$ and the $3$-cells $\rho_u$,
\item the $1$-cells $\rep{1}_x$ and the $2$-cells $\iota_x$.
\end{itemize}

\end{example}

%%%%%%%%%%%%%%%%%%%%%%%%%%%%%%
\begin{theorem}
\label{Theorem:TietzeTransformations}
Two (finite) $(3,1)$-polygraphs $\Sigma$ and $\Upsilon$ are Tietze equivalent if, and only if, there exists a (finite) Tietze transformation between them. As a consequence, if $\Sigma$ is a coherent presentation of a category $\C$ and if there exists a Tietze transformation from $\Sigma$ to $\Upsilon$, then $\Upsilon$ is a coherent presentation of~$\C$.
\end{theorem}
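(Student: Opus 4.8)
The plan is to prove the two directions of the equivalence and then extract the stated consequence. The backward direction is essentially immediate: if $F:\tck{\Sigma}\fl\tck{\Upsilon}$ is a Tietze transformation, it decomposes into a finite sequence of elementary Tietze transformations, so it suffices to observe that each of the six elementary operations listed above is a Tietze equivalence of the presented $2$-categories. For the adjunctions, this is because the $(3,1)$-polygraph obtained by coherently adjoining a collapsible cell together with its redundant target generates (up to isomorphism) the same $(2,1)$-category: adding $x$ and $\alpha$ (resp.\ $\alpha$ and $\gamma$, resp.\ $\gamma$) does not change $\cl{\Sigma}_2$ because the new $1$-cell (resp.\ $2$-cell, resp.\ $3$-cell) is identified with an already-existing composite. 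For the eliminations, the explicit descriptions of $\Sigma/\alpha$, $\Sigma/\gamma$ and $\Sigma/(A,\gamma)$ given above exhibit the quotient $(3,1)$-polygraph, and the coherent elimination $3$-functor $\pi$ is readily checked to be an equivalence of $2$-categories inducing an isomorphism on the quotient categories $\Sigma_1/\Sigma_2$. Composing, $F$ is a Tietze equivalence.

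For the forward direction, suppose $\Sigma$ and $\Upsilon$ are Tietze equivalent, so the $2$-categories $\cl{\Sigma}$ and $\cl{\Upsilon}$ are equivalent via some $2$-functor inducing an isomorphism on presented categories. The strategy is the classical Tietze argument, lifted to dimension three: first use elementary transformations of type (1)--(3) to enlarge both $\Sigma$ and $\Upsilon$ until they become literally isomorphic $(3,1)$-polygraphs, then conclude. Concretely, I would adjoin to $\Sigma$, one at a time, a collapsible $2$-cell $\iota_u$ for each generating $1$-cell $u$ of $\Upsilon$ (with its redundant target a fresh copy of $u$), realised inside $\cl{\Sigma}$ by a chosen $1$-cell representing the image of $u$; then adjoin a collapsible $3$-cell for each generating $2$-cell of $\Upsilon$; then adjoin a collapsible $3$-sphere (type (3)) for each generating $3$-cell of $\Upsilon$. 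After performing the symmetric enlargement of $\Upsilon$ by the cells of $\Sigma$, both polygraphs contain isomorphic copies of $\Sigma\sqcup\Upsilon$-data, and the remaining original cells of each become redundant and can be eliminated by the reverse operations. The finiteness claim follows because, when $\Sigma$ and $\Upsilon$ are finite, only finitely many elementary transformations are used.

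Finally, the consequence is purely formal: a coherent presentation is by definition a $(3,1)$-polygraph whose $3$-cells form a homotopy basis, equivalently — by Theorem~\ref{Theorem:CoherentPresentationsCofibrantApproximations} — one whose presented $(2,1)$-category is a cofibrant approximation of $\C$. Tietze equivalence preserves the presented category up to isomorphism, and since a Tietze transformation from a coherent presentation $\Sigma$ to $\Upsilon$ is in particular a Tietze equivalence, $\Upsilon$ presents a $2$-category equivalent to $\cl{\Sigma}$, hence still a cofibrant approximation of $\C$; by the cited theorem, $\Upsilon$ is a coherent presentation of $\C$.

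I expect the main obstacle to be the forward direction: making precise the inductive enlargement so that at each step the cell being adjoined genuinely satisfies the collapsibility conditions (its source must lie in the free $(3,1)$-category over $\Sigma$ minus its target), and checking that after enlarging both sides the two polygraphs are isomorphic rather than merely Tietze equivalent — in other words, that the bookkeeping of which cells are "old" and which are "new copies" can be carried out coherently in all three dimensions at once. The lower-dimensional analogue is standard (Tietze's theorem for group presentations, as in~\cite{Tietze08,LyndonSchupp77}), so the real content is organising the $3$-dimensional coherence data, and for this the explicit quotient descriptions of $\Sigma/\alpha$, $\Sigma/\gamma$ and $\Sigma/(A,\gamma)$ above do most of the work.
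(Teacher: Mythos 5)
Your backward direction and your deduction of the consequence from Theorem~\ref{Theorem:CoherentPresentationsCofibrantApproximations} match the paper's argument. The forward direction also has the same overall shape as the paper's proof --- both build a common enlargement $\Xi$ of $\Sigma$ and $\Upsilon$ reachable from each by coherent adjunctions --- but the step you flag as ``the main obstacle'' is exactly where the content of the proof lies, and your sketch does not contain the idea that resolves it. The difficulty is not the bookkeeping of old versus new cells: it is that, after you have adjoined the cells of $\Upsilon$ to $\Sigma$ (each $1$-cell $y$ with a witness $\psi_y\colon G(y)\dfl y$, and so on in higher dimensions), you must still produce, for every \emph{original} cell of $\Sigma$, a collapsible cell of the enlarged polygraph having that cell as redundant target --- otherwise the final ``reverse eliminations'' cannot be performed. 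In particular the two naive enlargements are not isomorphic as you claim: the one built over $\Sigma$ carries witnesses $G(y)\dfl y$, the one built over $\Upsilon$ carries witnesses $F(x)\dfl x$, and these are different data.

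The paper supplies the missing witnesses by first upgrading the Tietze equivalence to an adjoint equivalence $(F,G,\sigma,\tau)$, lifting $F$, $G$, the pseudonatural isomorphisms $\sigma$, $\tau$ and the triangle identities to the free $(3,1)$-categories (this lifting uses that parallel $2$-cells of $\tck{\Sigma}_2$ which become equal in $\cl{\Sigma}$ are related by a $3$-cell of $\tck{\Sigma}$), and then adjoining, for each $1$-cell $x$ of $\Sigma$, a $2$-cell $\phi_x\colon F(x)\dfl x$ together with a $3$-cell $\xi_x$ identifying it with $\psi^-_{F(x)}\star_1\rep{\sigma}_x$; for each $2$-cell $\alpha\colon u\dfl u'$ of $\Sigma$, a $3$-cell $\phi_\alpha$ relating $\alpha$ to $\phi_u^-\star_1\rep{F}(\alpha)\star_1\phi_{u'}$, built from the lifted naturality cell $\rep{\sigma}_\alpha$; and for each $1$-cell $y$ of $\Upsilon$, a $3$-cell $\eta_y$ built from the lifted triangle identity $\rho_y$. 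It is these coherence cells that make the construction symmetric in $\Sigma$ and $\Upsilon$ and hence yield the composite Tietze transformation $\Sigma\fl\Xi\fl\Upsilon$. Your appeal to the explicit quotient descriptions of $\Sigma/\alpha$, $\Sigma/\gamma$ and $\Sigma/(A,\gamma)$ handles only the elimination half; the adjunction half --- exhibiting each cell to be adjoined, and later each cell to be eliminated, as Nielsen-equivalent to a collapsible one --- is the part that genuinely requires the lifted adjoint-equivalence data, and it is absent from your proposal. So the strategy is right, but the proof as written has a real gap at its crux.
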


\begin{proof}
Let us prove that, if two $(3,1)$-polygraphs are related by a Tietze transformation, then they are Tietze-equivalent. Since isomorphisms of categories and equivalence of $2$-categories compose, it is sufficient to check the result for each one of the six types of elementary Tietze transformations on a fixed $(3,1)$-polygraph $\Sigma$. By definition, the $3$-functors $\pi\circ\iota$ are all equal to the identity of $\tck{\Sigma}$ and the $3$-functors $\iota\circ\pi$ induce identities on the presented category. Moreover, the latter induce the following $2$-functors on the presented $2$-category $\cl{\Sigma}$:
\[
\cl{\iota}_{u}\circ\cl{\pi}_{\alpha} \:\simeq\: 1_{\cl{\Sigma}},
\qquad\qquad
\cl{\iota}_{f}\circ\cl{\pi}_{A} \:=\: 1_{\cl{\Sigma}},
\qquad\qquad
\cl{\iota}_{A}\circ\cl{\pi}_{(A,\gamma)} \:=\: 1_{\cl{\Sigma}}.
\]
Indeed, the first isomorphism is the identity on every cell, except on $x$ which is mapped to $\cl{\alpha}$. The second and third isomorphisms are, in fact, identities since they do not change the equivalence classes of $2$-cells modulo $3$-cells.

Conversely, let $\Sigma$ and $\Upsilon$ be Tietze-equivalent $(3,1)$-polygraphs. We fix an equivalence $F:\cl{\Sigma}\fl\cl{\Upsilon}$ of $2$-categories that induces an isomorphism on the presented categories. We choose a weak inverse $G:\cl{\Upsilon}\fl\cl{\Sigma}$ and pseudonatural isomorphisms $\sigma:GF\dfl 1_{\cl{\Sigma}}$ and $\tau:FG\dfl 1_{\cl{\Upsilon}}$, in such a way that the quadruple $(F,G,\sigma,\tau)$ is an adjoint equivalence, which is always feasible~\cite[Chap.~IV, \textsection~4, Theorem~1]{MacLane98}. This means that the following ``triangle identities'' hold:
\[
\xymatrix@!C@C=3em{
FGF
	\ar@2@/^3ex/ [r] ^{F\sigma} _{}="src"
	\ar@2@/_3ex/ [r] _{\tau F} ^{}="tgt"
& F
\ar@{} "src";"tgt" |{\sm =}
}
\qquad\qquad
\xymatrix@!C@C=3em{
GFG
	\ar@2@/^3ex/ [r] ^{G\tau} _{}="src"
	\ar@2@/_3ex/ [r] _{\sigma G} ^{}="tgt"
& G
\ar@{} "src";"tgt" |{\sm =}
}
\]
Let us lift the $2$-functor $F$ to a $3$-functor $\rep{F}:\tck{\Sigma}\fl\tck{\Upsilon}$, defined as $F$ on the $0$-cells and $1$-cells. For every $2$-cell $\alpha:u\dfl v$ of $\Sigma$, we choose a representative $\rep{F}(\alpha):F(u)\dfl F(v)$ of $F(\cl{\alpha})$ in $\tck{\Upsilon}$ and, then, we extend $\rep{F}$ by functoriality to every $2$-cell of $\tck{\Sigma}$. For a $3$-cell $\gamma:f\tfl g$ of $\Sigma$, we have $\cl{f}=\cl{g}$ by definition of $\cl{\Sigma}$, so that $F(\cl{f})=F(\cl{g})$ holds in $\cl{\Upsilon}$, meaning that there exists a $3$-cell in $\tck{\Upsilon}$ from $\rep{F}(f)$ to $\rep{F}(g)$: we take it as a value for $\rep{F}(\gamma)$ and we extend $\rep{F}$ to every $3$-cell of $\tck{\Sigma}$ by functoriality. We proceed similarly with~$G$ to get a $3$-functor $\rep{G}:\tck{\Upsilon}\fl\tck{\Sigma}$. 

Then, for a $1$-cell $x$ of $\Sigma$, we choose a representative $\rep{\sigma}_x:GF(x)\dfl x$ of $\sigma_x$ in~$\tck{\Sigma}$ and we extend it to every $1$-cell by functoriality. If $\alpha:u\dfl v$ is a $2$-cell of $\Sigma$, the naturality condition satisfied by $\sigma$ on $\cl{\alpha}$ lifts to an arbitrarily chosen $3$-cell of $\Sigma$
\[
\xymatrix@!C@R=1em@C=2em{
& GF(v)
	\ar@2@/^/ [dr] ^{\rep{\sigma}_v}
	\ar@3 []!<0pt,-15pt>;[dd]!<0pt,15pt> ^{\rep{\sigma}_{\alpha}}
\\
GF(u) 
	\ar@2@/^/ [ur] ^{\rep{G}\rep{F}(\alpha)}
	\ar@2@/_/ [dr] _{\rep{\sigma}_u}
&& v
\\
& u
	\ar@2@/_/ [ur] _{\alpha}
}
\]
We proceed similarly with $\tau$. The conditions for the adjoint equivalence also lift to a $3$-cell $\lambda_x$ of~$\tck{\Upsilon}$ for every $1$-cell $x$ of $\Sigma$ and to a $3$-cell $\rho_y$ of $\tck{\Sigma}$ for every $1$-cell $y$ of $\Upsilon$:
\[
\xymatrix@!C@C=3em{
FGF(x)
	\ar@2@/^4ex/ [r] ^{\rep{F}(\rep{\sigma}_x)} _{}="src"
	\ar@2@/_4ex/ [r] _{\rep{\tau}_{F(x)}} ^{}="tgt"
& F(x)
\ar@3 "src"!<0pt,-10pt>;"tgt"!<0pt,10pt> ^-{\lambda_x}
}
\qquad\qquad
\xymatrix@!C@C=3em{
GFG(y)
	\ar@2@/^4ex/ [r] ^{\rep{G}(\rep{\tau}_y)} _{}="src"
	\ar@2@/_4ex/ [r] _{\rep{\sigma}_{G(y)}} ^{}="tgt"
& G(y)
\ar@3 "src"!<0pt,-10pt>;"tgt"!<0pt,10pt> ^-{\rho_y}
}
\]
Now, let us build a Tietze transformation from $\Sigma$ to $\Upsilon$. We start by constructing a $(3,1)$-polygraph~$\Xi$ that contains both $\Sigma$ and $\Upsilon$, together with coherence cells that correspond to the Tietze equivalence. The $(3,1)$-polygraph $\Xi$ has the same $0$-cells as $\Sigma$ (and as $\Upsilon$) and it contains the $1$-cells, $2$-cells and $3$-cells of~$\Sigma$ and $\Upsilon$, plus the following cells:
\begin{itemize}

\item Two $2$-cells $\phi_x:F(x)\dfl x$ and $\psi_y:G(y)\dfl y$, for all $1$-cells $x$ of $\Sigma$ and $y$ of $\Upsilon$. Using the fact that $F$ is a functor that preserves the $0$-cells, we extend $\phi$ to every $1$-cell $u$ of $\tck{\Sigma}$ by functoriality, \ie{} by $\phi_{1_p} = 1_{1_p}$ and $\phi_{uu'} = \phi_u \phi_{u'}$, to get a $2$-cell $\phi_u:F(u)\dfl u$ for every $1$-cell $u$ of~$\tck{\Sigma}$. We proceed similarly with $\psi$ to define a $2$-cell $\psi_v:G(v)\dfl v$ of $\tck{\Xi}$ for every $1$-cell $v$ of~$\tck{\Upsilon}$.

\item Two $3$-cells $\phi_{\alpha}$ and $\psi_{\beta}$, for all $2$-cells $\alpha:u\dfl u'$ and $\beta:v\dfl v'$, with the following shapes:
\[
\xymatrix@!C {
& F(u)
	\ar@2 [r] ^{\rep{F}(\alpha)} _{}="src"
& F(u')
	\ar@2@/^/ [dr] ^{\phi_{u'}}
\\
u
	\ar@2@/^/ [ur] ^{\phi_u^-} 
	\ar@2@/_/ [rrr] _{\alpha} ^{}="tgt"
\ar@3 "src"!<0pt,-10pt>;"tgt"!<0pt,10pt> ^{\phi_{\alpha}}
&&& u'
}
\qquad
\xymatrix@!C {
& G(v)
	\ar@2 [r] ^{\rep{G}(\beta)} _{}="src"
& G(v')
	\ar@2@/^/ [dr] ^{\psi_{v'}}
\\
v
	\ar@2@/^/ [ur] ^{\psi_v^-} 
	\ar@2@/_/ [rrr] _{\beta} ^{}="tgt"
\ar@3 "src"!<0pt,-10pt>;"tgt"!<0pt,10pt> ^{\psi_{\beta}}
&&& v'
}
\]
We use the $2$-functoriality of the sources and targets of $\phi_{\alpha}$ and $\psi_{\beta}$ to extend $\phi$ and $\psi$ to every $2$-cells $f$ of $\tck{\Sigma}$ and $g$ of $\tck{\Upsilon}$, respectively.

\item Two $3$-cells $\xi_x$ and $\eta_y$, for all $1$-cells $x$ of $\Sigma$ and $y$ of $\Upsilon$, with the following shapes:
\[
\xymatrix @!C @C=3em {
& GF(x)
	\ar@2@/^/ [dr] ^{\rep{\sigma}_x}
	\ar@3 []!<0pt,-15pt>;[d]!<0pt,5pt> ^{\xi_x}
\\
F(x)
	\ar@2@/^/ [ur] ^{\psi^-_{F(x)}}
	\ar@2@/_/ [rr] _{\phi_x}
&& x
}
\qquad\qquad
\xymatrix @!C @C=3em {
& FG(y)
	\ar@2@/^/ [dr] ^{\rep{\tau}_y}
	\ar@3 []!<0pt,-15pt>;[d]!<0pt,5pt> ^{\eta_y}
\\
G(y)
	\ar@2@/^/ [ur] ^{\phi^-_{G(y)}}
	\ar@2@/_/ [rr] _{\psi_y}
&& y
}
\]
We then extend $\epsilon$ and $\eta$ to all $1$-cells $u$ of $\tck{\Sigma}$ and $v$ of $\tck{\Upsilon}$, respectively.
\end{itemize}
We construct a Tietze transformation $\Phi$ from $\Sigma$ to $\Xi$ step-by-step, as follows. 
\begin{itemize}
\item Adjunction of the cells of $\Upsilon$. For every $1$-cell $y$ of $\Upsilon$, we apply $\iota_{G(y)}$ to coherently add $y$ and $\psi_y: G(y)\dfl y$. Then, for every $2$-cell $\beta:v\dfl v'$ of $\Upsilon$, we apply $\iota_{\psi_v^-\star_1\rep{G}(\beta)\star_1 \psi_{v'}}$ to coherently add $\beta$ and $\psi_{\beta}$. Then, we add every $3$-cell $\delta:g\tfl g'$ of $\Upsilon$ with $\iota_{B}$, where $B$ is the $3$-cell of~$\tck{\Xi}$ defined by
\[
B\:=\: \psi_g^- \star_2 \big( \psi_v^- \star_1 G(\delta) \star_1 \psi_{v'} \big) \star_2 \psi_{g'}
\]
and pictured as follows:
\[
\xymatrix@!C @C=3em {
v
	\ar@2@/^12ex/ [rrr] ^(0.25){g} _{}="src1"
	\ar@2 [r] |-{\psi_v^-}
	\ar@2@/_12ex/ [rrr] _(0.25){g'} _{}="tgt3"
&	G(v)
	\ar@2@/^4ex/ [r] ^(0.3){G(g)} ^{}="tgt1" _{}="src2"
	\ar@2@/_4ex/ [r] _(0.3){G(g')} ^{}="tgt2" _{}="src3"
& G(v')
	\ar@2 [r] |-{\psi_{v'}}
& v'
\ar@3 "src1"!<0pt,-10pt>;"tgt1"!<0pt,10pt> ^{\psi_g^-}
\ar@3 "src2"!<-10pt,-10pt>;"tgt2"!<-10pt,10pt> ^{G(\delta)}
\ar@3 "src3"!<0pt,-10pt>;"tgt3"!<0pt,10pt> ^{\psi_{g'}}
}
\]

\item Adjunction of the coherence cells for $\Sigma$. For every $1$-cell $x$, we apply $\iota_{\psi^-_{F(x)}\star_1\rep{\sigma}_x}$ to coherently add the $2$-cell $\phi_x$ and the $3$-cell $\xi_x$. Then, for every $2$-cell $\alpha:u\dfl u'$ of $\Sigma$, we add the $3$-cell~$\phi_{\alpha}$ with~$\iota_A$, where $A$ is the $3$-cell of $\tck{\xi}$ defined by
\[
A \:=\: 
	\big( \phi_u^- \star_1 \xi_u \star_1 \rep{\sigma}_u^- \star_1 \psi_{F(u)} 
	\star_1 \psi_{\rep{F}(\alpha)}^-
	\star_1 \xi_{u'}^- \big)
	\:\star_2\: 
	\big( \rep{\sigma}_u^- \star_1 \rep{\sigma}_{\alpha} \big)
\]
and pictured as follows, where we abusively simplify the labels of $3$-cells for readability:
\[
\xymatrix @!C @C=4em @R=2.75em {
& F(u) 
	\ar@2 [r] ^{\rep{F}(\alpha)}
	\ar@{} [dr] |{\psi_{\rep{F}(\alpha)}}
& F(u')
	\ar@2@/^3ex/ [dr] ^{\phi_{u'}} _{}="src2"
	\ar@2 [d] |{\psi^-_{F(u')}}
\\
u 
	\ar@2@/^3ex/ [ur] ^{\phi_u^-} _{}="src1"
	\ar@2 [r] |{\rep{\sigma}^-_u}
	\ar@2@/_10ex/ [rrr] _{\alpha} ^{}="tgt3"
& GF(u)
	\ar@2 [u] |{\psi_{F(u)}}
	\ar@2 [r] |{\rep{G}\rep{F}(\alpha)} _{}="src3"
& GF(u')
	\ar@2 [r] |{\rep{\sigma}_{u'}}
& u'
\ar@{} "src1";"2,2" |-{\xi_u}
\ar@{} "src2";"2,3" |-{\xi_{u'}}
\ar@{} "src3";"tgt3" |{\rep{\sigma}_{\alpha}}
}
\]

\item Adjunction of the last coherence cells for $\Upsilon$. For every $1$-cell $y$ of $\Upsilon$, we add the $3$-cell $\eta_y$ with~$\iota_C$, where $C$ is the $3$-cell of $\tck{\Xi}$ defined by
\[
C \:=\: 
	\big( \phi_{G(y)}^- \star_1 \xi_{G(y)} \star_1 \rep{\sigma}_{G(y)}^- \star_1 \psi_{FG(y)} \star_1 \psi_{\rep{\tau}_y}^- \big)
	\:\star_2\:
	\big( \rep{\sigma}_{G(y)}^- \star_1 \rho_y \star_1 \psi_y \big)
\]
and pictured, in a simplified way, as follows
\[
\xymatrix@!C @C=4em @R=2.75em {
& FG(y)
	\ar@2@/^3ex/ [dr] ^{\rep{\tau}_y}
\\
G(y)
	\ar@2@/^3ex/ [ur] ^{\phi^-_{G(y)}} _{}="src1"
	\ar@2 [r] |{\rep{\sigma}^-_{G(y)}}
	\ar@2{=}@/_3ex/ [dr] ^{}="src2"
& GFG(y)
	\ar@2 [u] _{\psi_{FG(y)}}
	\ar@2 [d] ^{\rep{G}(\rep{\tau}_y)}
	\ar@{} [r] |-{\psi_{\rep{\tau}_y}}
& y
\\
& G(y)
	\ar@2@/_3ex/ [ur] _{\psi_y}
\ar@{} "src1";"2,2" |-{\xi_{G(y)}}
\ar@{} "src2";"2,2" |-{\rho_y}
}
\]
\end{itemize}
As a result, we get a Tietze transformation $\Phi$ from $\Sigma$ to $\Xi$. Since the construction and the result are totally symmetric in $\Sigma$ and $\Upsilon$, and since the Tietze transformation $\Phi$ contains coherent adjunctions only, we also get a Tietze transformation $\Psi$ from $\Xi$ to $\Upsilon$. By composition, we get a Tietze transformation from~$\Sigma$ to $\Upsilon$. To conclude, we note that both $\Phi$ and $\Psi$ are finite when both $\Sigma$ and $\Upsilon$ are.

Finally, if $\Sigma$ is a coherent presentation of a category $\C$, then the $2$-category it presents is a cofibrant approximation of $\C$ by Theorem~\ref{Theorem:CoherentPresentationsCofibrantApproximations}. Moreover, if there exists a Tietze transformation from $\Sigma$ to~$\Upsilon$, they are Tietze-equivalent by the first part of the proof. Thus, the categories presented by $\Sigma$ and~$\Upsilon$ are isomorphic (to $\C$), and the $2$-categories they present are equivalent, hence weakly equivalent. As a consequence, the $2$-category presented by $\Upsilon$ is also a cofibrant approximation of $\C$ so that, by Theorem~\ref{Theorem:CoherentPresentationsCofibrantApproximations}, we conclude that $\Upsilon$ is a coherent presentation of~$\C$.
\end{proof}

%%%%%%%%%%%%%%%%%%%%%%%%%%%%%%%%%%%
\subsubsection{Higher Nielsen transformations}
\label{Subsubsection:Nielsen}

We introduce higher-dimensional analogues of Nielsen transformations to perform replacement of cells in $(3,1)$-polygraphs. The \emph{elementary Nielsen transformations} on a $(3,1)$-polygraph $\Sigma$ are the following operations:
\begin{enumerate}
\item The replacement of a $2$-cell by a formal inverse (including in the source and target of every $3$-cell).
\item The replacement of a $3$-cell  by a formal inverse.
\item The replacement of a $3$-cell $\gamma : f \tfl g$ by a $3$-cell $\tilde{\gamma}:h\star_1 f\star_1 k \tfl h\star_1 g\star_1 k$, where $h$ and $k$ are $2$-cells of $\tck{\Sigma}$.
\end{enumerate}
Each one of those three elementary Nielsen transformations is a Tietze transformation. For example, the last one is the composition of the following elementary Tietze transformations:
\begin{itemize}
\item the coherent adjunction $\iota_{h\star_1\gamma\star_1 k}$ of the $3$-cell $\tilde{\gamma}:h\star_1 f\star_1 k \tfl h\star_1 g\star_1 k$,
\item the coherent elimination $\pi_{h^-\star_1\tilde{\gamma}\star_1k ^-}$ of $\gamma$.
\end{itemize}
The replacement of a $2$-cell $\alpha:u\dfl v$ by a formal inverse $\tilde{\alpha}:v\dfl u$ is the composition of:
\begin{itemize}
\item the coherent adjunction $\iota_{\alpha^-}$ of the $2$-cell $\tilde{\alpha}:v\dfl u$ and a $3$-cell $ \gamma:\alpha^-\tfl\tilde{\alpha}$,
\item the Nielsen transformation that replaces $\gamma$ with $\tilde{\gamma}:\tilde{\alpha}^-\tfl\alpha$ by composition with $\alpha$ on one side and by $\tilde{\alpha}^-$ on the other side,
\item the coherent elimination $\pi_{\tilde{\gamma}}$ of  $\alpha$ and $\tilde{\gamma}$. 
\end{itemize}

In what follows, we perform coherent eliminations of cells that are collapsible only up to a Nielsen transformation (a composition of elementary ones). If $f$ is Nielsen-equivalent to a collapsible cell $\tilde{f}$, we abusively denote by $\pi_{f}$ the corresponding coherent elimination, with a precision about the eliminated cell $t(\tilde{f})$ when it is not clear from the context. In a similar way, if $(A, B)$ is a noncollapsible $3$-sphere of~$\tck{\Sigma}$, we denote by $\pi_{(A,B)}$ the potential coherent elimination corresponding to a collapsible $3$-sphere~$\tck{\Sigma}$ obtained from $(A,B)$ by composition with $2$-cells and $3$-cells of $\tck{\Sigma}$.

%%%%%%%%%%%%%%%%%%%%%%%%%%%%%%%%%%%%%%%%%%%
\subsection{Homotopical completion}
\label{SubsectionHomotopicalCompletion}
%%%%%%%%%%%%%%%%%%%%%%%%%%%%%%%%%%%%%%%%%%%

In this section, we recall notions of rewriting theory for $2$-polygraphs from~\cite[4.1]{GuiraudMalbos09} and~\cite[4.1]{GuiraudMalbos12}, together with Squier's completion to compute coherent presentations from convergent presentations. Then we extend Squier's completion to terminating $2$-polygraphs thanks to Knuth-Bendix's completion~\cite{KnuthBendix70}. 

%%%%%%%%%%%%%%%%%%%%%%%%%%%%%%%%%%%%%%%%%%%
\subsubsection{Rewriting properties of \pdf{2}-polygraphs}

A  \emph{rewriting step} of a $2$-polygraph $\Sigma$ is a $2$-cell of the free $2$-category $\Sigma^*$ with shape
\[
\xymatrix@C=4em{
{y}
	\ar [r] ^-{w} 
& {x}
	\ar@/^3ex/ [r] ^-{u} ^{}="src"
	\ar@/_3ex/ [r] _-{v} ^{}="tgt"
	\ar@2 "src"!<0pt,-10pt>;"tgt"!<0pt,10pt> ^-{\alpha}
& {x'}
	\ar [r] ^-{w'}
& {y'}
}
\]
where $\alpha:u\dfl v$ is a $2$-cell of $\Sigma$ and $w$ and $w'$ are $1$-cells of $\Sigma^*$. A \emph{normal form} is a $1$-cell that is the source of no rewriting step. 

We say that $\Sigma$ \emph{terminates} if it has no infinite rewriting sequence (no infinite sequence of composable rewriting steps). In that case, the relations $s(f)>t(f)$ for $f$ a rewriting step define a \emph{termination order}: this is a well-founded order relation on the $1$-cells that is compatible with the composition. Another example of termination order is the deglex order that first compares the length and, then, uses a lexicographic order on the words of same length. In fact, the existence of a termination order is sufficient to prove termination. 

A \emph{branching of $\Sigma$} is a (non-ordered) pair $(f,g)$ of $2$-cells of $\Sigma^*$ with a common source, also called the source of the branching. We say that $\Sigma$ is \emph{confluent} if all of its branchings are confluent, that is, for every branching $(f,g)$, there exist $2$-cells $f'$ and $g'$ in $\Sigma^*$, as in the following diagram: 
\[
\xymatrix @R=1em @C=3em{
& {v}
	\ar@2@/^/ [dr] ^-{f'}
\\
{u}
	\ar@2@/^/ [ur] ^-{f}
	\ar@2@/_/ [dr] _-{g}
&& {u'}
\\
& {w}
	\ar@2@/_/ [ur] _-{g'}
}
\]
A branching $(f,g)$ is local if $f$ and $g$ are rewriting steps. The local branchings are classified as follows:
\begin{itemize}
\item \emph{aspherical} branchings have shape $(f,f)$,
\item \emph{Peiffer} branchings have shape $(f v, u g)$, where $u=s(f)$ and $v=s(g)$,
\item \emph{overlap} branchings are all the other cases.
\end{itemize}
Local branchings are ordered by inclusion of their sources, and a minimal overlap branching is called \emph{critical}. Under the termination hypothesis, confluence is equivalent to confluence of critical branchings. 

We say that $\Sigma$ is \emph{convergent} if it terminates and is confluent. Such a~$\Sigma$ is called a \emph{convergent presentation} of the category $\cl{\Sigma}$, and of any category that is isomorphic to $\cl{\Sigma}$. In that case, every $1$-cell $u$ of $\Sigma^*$ has a unique normal form, denoted by $\rep{u}$, so that we have $\cl{u}=\cl{v}$ in $\cl{\Sigma}$ if, and only if, $\widehat{u}=\widehat{v}$ holds in~$\Sigma^*$. This extends to a section $\cl{\Sigma}\ifl\Sigma^*$ of the canonical projection, sending a $1$-cell $u$ of~$\cl{\Sigma}$ to the unique normal form of its representative $1$-cells in $\Sigma^*$, still denoted by $\rep{u}$. 
A $(3,1)$-polygraph is \emph{convergent} if its underlying $2$-polygraph is.

%%%%%%%%%%%%%%%%%%%%%%%%%%%%%%%%%%%%%%%%%%%
\subsubsection{Squier's completion for convergent polygraphs}

Let us assume that $\Sigma$ is convergent. A \emph{family of generating confluences of~$\Sigma$} is a cellular extension of $\tck{\Sigma}$ that contains exactly one $3$-cell  
\[
\xymatrix @R=1em@C=3em @!C{
& {v}
	\ar@2 @/^/ [dr] ^{f'}
	\ar@3 []!<0pt,-15pt>;[dd]!<0pt,15pt> 
\\
{u}
	\ar@2 @/^/ [ur] ^{f}
	\ar@2 @/_/ [dr] _{g}
&& {u'}
\\
& {w}
	\ar@2 @/_/ [ur] _{g'}
}
\]
for every critical branching $(f,g)$ of $\Sigma$. Such a family always exists by confluence but it is not necessarily unique. Indeed, the $3$-cell can be directed in the reverse way and, for a given branching $(f,g)$, one can have several possible $2$-cells $f'$ and~$g'$ with the required shape (see~\cite[4.3.2]{GuiraudMalbos12} for a constructive version, based on normalisation strategies). 
We call \emph{Squier's completion of $\Sigma$} the $(3,1)$-polygraph %denoted by $\Sr(\Sigma)$ and 
obtained from~$\Sigma$ by adjunction of a chosen family of generating confluences of $\Sigma$. The following result is due to Squier, we refer to ~\cite[Theorem~4.4.2]{GuiraudMalbos13} for a proof in our language.

%%%
\begin{theorem}[{\cite[Theorem~5.2]{Squier94}}]
\label{Theorem:SquierCompletion}
For every convergent presentation~$\Sigma$ of a category~$\C$, Squier's completion of~$\Sigma$ is a coherent presentation of~$\C$.
\end{theorem}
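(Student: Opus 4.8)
The plan is to establish that Squier's completion $\Sr(\Sigma)$ is simultaneously an extended presentation of $\C$ and that its $3$-cells form a homotopy basis of $\tck{\Sigma}_2$, these being exactly the two conditions in the definition of a coherent presentation. The first point is immediate: $\Sr(\Sigma)$ has the same underlying $2$-polygraph as $\Sigma$, which presents $\C$ by hypothesis, and the category presented by a $(3,1)$-polygraph depends only on its underlying $2$-polygraph. So the whole content lies in the homotopy-basis claim, and this is where the convergence hypothesis is used.

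First I would reduce the homotopy-basis statement to a purely local property. A cellular extension $\Gamma$ of $\tck{\Sigma}_2$ is a homotopy basis precisely when any two parallel $2$-cells of $\tck{\Sigma}_2$ become equal in the quotient $\tck{\Sigma}_2(\Gamma)/\Gamma$, equivalently when there is a $3$-cell between them in $\tck{\Sigma}_2(\Gamma)$. The key observation is that, because $\Sigma$ terminates, every $2$-cell $f\colon u\dfl v$ of $\Sigma^*$ can be connected, using the generating confluences, to a rewriting sequence from $u$ to its normal form $\rep{u}$; and because $\Sigma$ is confluent, parallel $2$-cells $f,g\colon u\dfl v$ force $\rep{u}=\rep{v}$, so after normalising both sides one is comparing two rewriting sequences with the same source and the same target (the normal form). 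I would then show, by Newman-type / Church–Rosser reasoning internal to the free $(3,1)$-category, that any two such normalising sequences are connected by a $3$-cell built from the generating confluences: this is the standard argument that a chosen family of generating confluences tiles every ``confluence diamond'', handled by well-founded induction on the common source using the termination order, with the base case being normal forms and the inductive step splitting a branching into a critical (hence already filled), aspherical (trivially filled), or Peiffer (filled by the exchange relation, i.e. functoriality of $\star_0$) one. This is essentially the content of the cited Theorem~4.4.2 of \cite{GuiraudMalbos13}, and I would invoke it rather than reprove it.

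Concretely the steps are: (i) note $\overline{\Sr(\Sigma)}=\overline{\Sigma}=\C$; (ii) fix parallel $2$-cells $f,g\colon u\dfl v$ in $\tck{\Sigma}_2$, lift them to $\Sigma^*$ up to inverses of rewriting steps, and reduce to the case where $f$ and $g$ are positive $2$-cells of $\Sigma^*$ sharing source $u$ and target the normal form $\rep{u}$; (iii) prove by Noetherian induction on $u$ that all such $f,g$ are identified modulo the generating confluences, using the classification of local branchings (aspherical, Peiffer, overlap) and the fact that overlap branchings reduce to critical ones by context closure; (iv) conclude that $\Sr(\Sigma)_3$ is a homotopy basis, hence $\Sr(\Sigma)$ is a coherent presentation of $\C$. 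The main obstacle is step (iii): one must be careful that the generating family contains a $3$-cell only for each \emph{critical} branching, so the induction has to propagate the filling of critical diamonds to arbitrary local branchings (via whiskering by $1$-cells, which is legitimate since $3$-cells of $\tck{\Sigma}_2$ can be composed with $1$-cells) and then, through Newman's lemma adapted to the $2$-categorical setting, to arbitrary branchings; getting the bookkeeping of sources, targets, and the well-foundedness of the recursion exactly right is the delicate part, but it is precisely what the invoked Squier-type theorem packages, so I would lean on that citation and only sketch the reduction.
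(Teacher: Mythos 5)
Your proposal is correct and matches the paper's treatment: the paper gives no proof of this theorem at all, simply attributing it to Squier and deferring to \cite[Theorem~4.4.2]{GuiraudMalbos13} for a proof in the polygraphic language, which is exactly the result you invoke. Your sketch of that argument (reduction to positive normalising $2$-cells, Noetherian induction on the source, classification of local branchings into aspherical, Peiffer and overlap cases, and whiskering of the critical confluences) is an accurate account of what the cited proof does.
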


%%%%%%%%%%%%%%%%%%%%%%%%%%%%%%%%%%%%%%%%%%%
\subsubsection{Homotopical completion}
\label{Subsubsection:HomotopicalCompletion}

Let $\Sigma$ be a terminating $2$-polygraph, equipped with a total termination order $\leq$. The \emph{homotopical completion of $\Sigma$} is the $(3,1)$-polygraph $\Sr(\Sigma)$ obtained from $\Sigma$ by successive application of Knuth-Bendix's and Squier's completions. In fact, both constructions can be interleaved to compute $\Sr(\Sigma)$, as we describe here.

One considers each critical branching $(f,g)$ of $\Sigma$. There are two possible situations, shown below, depending on whether $(f,g)$ is confluent or not:
\[
\xymatrix @C=3em @R=1em {
& {v}
	\ar@2@/^/ [dr] ^-{f'}
	\ar@3{.>} []!<0pt,-15pt>;[dd]!<0pt,15pt> ^-{\gamma}
\\
{u}
	\ar@2@/^/ [ur] ^-{f}
	\ar@2@/_/ [dr] _-{g}
&& {\rep{v}=\rep{w}}
\\
& {w}
	\ar@2@/_/ [ur] _-{g'}
}
\qquad\qquad
\xymatrix @C=3em @R=1em {
& {v}
	\ar@2 [r] ^-{f'}
	\ar@3{.>} []!<10pt,-15pt>;[dd]!<10pt,15pt> ^-{\gamma}
& {\rep{v}}
	\ar@2{<.>} [dd] ^-{\alpha}
\\
{u}
	\ar@2@/^/ [ur] ^-{f}
	\ar@2@/_/ [dr] _-{g}
\\
& {w}
	\ar@2 [r] _-{g'}
& {\rep{w}}	
}
\]
If $(f,g)$ is confluent, the left case occurs and one adds the dotted $3$-cell $\gamma$ to $\Sigma$. Otherwise, one performs a Tietze transformation on $\Sigma$ to coherently add the $2$-cell $\alpha$ and the $3$-cell $\gamma$. To preserve termination, the $2$-cell $\alpha$ is directed from $\rep{v}$ to $\rep{w}$ if $\rep{v}>\rep{w}$ and in the reverse direction otherwise. To be formal, the coherent adjunction would add a $3$-cell $\gamma$ with target $\alpha$, but we implicitly perform a Nielsen transformation for convenience.

The potential adjunction of additional $2$-cells $\alpha$ can create new critical branchings, whose confluence must also be examined, possibly generating the adjunction of additional $2$-cells and $3$-cells. This defines an increasing sequence of $(3,1)$-polygraphs, where $\Sigma^{n+1}$ is obtained by completion of the critical branchings of $\Sigma^n$:
\[
(\Sigma,\emptyset) 
	\:=\: \Sigma^0 
	\:\subseteq\: \Sigma^1 
	\:\subseteq\: \cdots 
	\:\subseteq\: \Sigma^n 
	\:\subseteq\: \Sigma^{n+1}
	\:\subseteq\: \cdots
\]
The $(3,1)$-polygraph $\Sr(\Sigma)$ is defined as the union of this increasing sequence. If the $2$-polygraph $\Sigma$ is already confluent, the homotopical completion is exactly Squier's completion. 
As a consequence of  Theorem~\ref{Theorem:SquierCompletion}, we get that the potentially infinite $(3,1)$-polygraph $\Sr(\Sigma)$ satisfies the following properties.

\begin{theorem}
\label{Theorem:HomotopicalCompletion}
For every terminating presentation $\Sigma$ of a category $\C$, the homotopical completion $\Sr(\Sigma)$ of $\Sigma$ is a coherent convergent presentation of $\C$.
\end{theorem}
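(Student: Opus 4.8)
The plan is to verify the three properties of $\Sr(\Sigma)$ separately: that it presents $\C$, that it is convergent as a $2$-polygraph, and that its $3$-cells form a homotopy basis. The key point is that each of these follows from results already established once we observe that $\Sr(\Sigma)$ is obtained from $\Sigma$ by a (possibly transfinite) sequence of Tietze transformations, together with the classical Knuth--Bendix argument for convergence.

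First I would analyze the construction step by step. At stage $n$, passing from $\Sigma^n$ to $\Sigma^{n+1}$ amounts to treating each nonconfluent critical branching $(f,g)$ by a coherent adjunction: we add a $2$-cell $\alpha$ (oriented by the total termination order $\leq$, hence preserving termination) together with a collapsible $3$-cell $\gamma$ witnessing the confluence, which is an elementary Tietze transformation of type~(2) up to a Nielsen transformation as explained in~\ref{Subsubsection:HomotopicalCompletion}. For confluent critical branchings we simply adjoin a generating confluence $3$-cell $\gamma$, which is an elementary Tietze transformation of type~(3) (coherent adjunction of a redundant $3$-cell). Thus every $\Sigma^n$ is Tietze-equivalent to $\Sigma$, and by Theorem~\ref{Theorem:TietzeTransformations} each $\Sigma^n$, if it were a coherent presentation, would present $\C$; in the colimit $\Sr(\Sigma)=\bigcup_n \Sigma^n$ the underlying $2$-polygraph still presents $\C$ because a relation holds in $\cl{\Sigma}$ if and only if it already holds using finitely many cells, hence appears at some finite stage, and conversely adding cells via Tietze transformations does not change the presented category.

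Next I would establish convergence of the underlying $2$-polygraph of $\Sr(\Sigma)$. Termination is immediate: every $2$-cell added is oriented compatibly with the fixed total termination order $\leq$, so $\leq$ remains a termination order for $\Sr(\Sigma)$. For confluence, the standard Knuth--Bendix argument applies: by Newman's lemma and the termination hypothesis it suffices to check that every critical branching of $\Sr(\Sigma)$ is confluent. Any critical branching of $\Sr(\Sigma)$ involves two $2$-cells from some finite stage $\Sigma^n$, hence is a critical branching of $\Sigma^n$; by construction it was treated at stage $n+1$, either by being already confluent (and receiving a generating $3$-cell) or by the adjunction of a $2$-cell $\alpha$ that makes the two sides join at the common normal form $\rep v = \rep w$. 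In either case the branching is confluent in $\Sigma^{n+1}\subseteq\Sr(\Sigma)$. Therefore $\Sr(\Sigma)$ is convergent, and by the discussion preceding the theorem it is a convergent presentation of $\C$.

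Finally, the homotopy basis property follows by applying Theorem~\ref{Theorem:SquierCompletion} (Squier's completion) to the now-established convergent presentation $\Sr(\Sigma)$ of $\C$: its $3$-cells were constructed so as to contain exactly one $3$-cell for each critical branching (the $\gamma$'s adjoined at each stage), which is precisely a family of generating confluences of $\Sr(\Sigma)$; hence $\Sr(\Sigma)$ is Squier's completion of its own underlying convergent $2$-polygraph, and Theorem~\ref{Theorem:SquierCompletion} gives that it is a coherent presentation of $\C$. The main obstacle I anticipate is the bookkeeping for the transfinite/limit step: one must check carefully that every critical branching of the union $\Sr(\Sigma)$ really does arise at some finite stage (so that no critical branching escapes completion) and that the sequence $(\Sigma^n)$ is genuinely increasing, so that $3$-cells and their boundaries are never invalidated by later stages. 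Once this is in place, the three pieces assemble immediately into the statement.
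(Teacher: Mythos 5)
Your proposal is correct and follows essentially the same route as the paper, which obtains this theorem directly as a consequence of Theorem~\ref{Theorem:SquierCompletion}: the completion steps are coherent Tietze adjunctions (so the presented category is unchanged), the orientation of the added $2$-cells by the total termination order preserves termination, Knuth--Bendix/Newman gives confluence of the union, and the adjoined $3$-cells form a family of generating confluences so that Squier's theorem applies. Your explicit treatment of the limit step (every critical branching of $\Sr(\Sigma)$ already lives in some $\Sigma^n$ and is resolved in $\Sigma^{n+1}$) is exactly the bookkeeping the paper leaves implicit.
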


%%%%%%%%%%%%%%%%%%%%%%%%%%%%%%%%%%%%%%%%%%%
\begin{example}
\label{Example:KNCompletion}
From~\cite{KapurNarendran85}, we consider the presentation $\Sigma = \big(s, t, a \;;\; ta \odfl{\alpha} as \,,\, st \odfl{\beta} a\big)$ of $\B_3^+=\B^+(S_3)$, obtained from Artin's presentation by coherent adjunction of the Coxeter element $st$ and the $2$-cell $\beta$. The deglex order generated by $t>s>a$ proves the termination of $\Sigma$. The homotopical completion of $\Sigma$ is the $(3,1)$-polygraph
\[
\Sr(\Sigma) \:=\: \big( s, t, a \;;\; ta \odfl{\alpha} as \,,\, st \odfl{\beta} a \,,\, sas \odfl{\gamma} aa \,,\, saa \odfl{\delta} aat \;;\; A, B, C,D \big)
\]
where $A$, $B$, $C$ and $D$ are the following $3$-cells, induced by completion of critical pairs $(\beta a,s\alpha)$ and $(\gamma t, sa\beta)$:
\[
  \vcenter{
    \xymatrix@R=1.5em@C=2em{
      & aa
      \\
      sta
      \ar@2@/^/ [ur] ^{\beta a} _(0.66){}="src"
      \ar@2@/_/ [dr] _{s\alpha} ^(0.66){}="tgt"
      \\
      & sas
      \ar@2@/_/ [uu] _{\gamma}
      \ar@3 "src"!<0pt,-15pt>;"tgt"!<0pt,15pt> ^{A}
    }
  }
  \quad
  \vcenter{
    \xymatrix@R=1.5em@C=2em{
      & aat
      \\
      sast
      \ar@2@/^/ [ur] ^{\gamma t} _(0.66){}="src"
      \ar@2@/_/ [dr] _{sa\beta} ^(0.658){}="tgt"
      \\
      & saa
      \ar@2@/_/ [uu] _{\delta}
      \ar@3 "src"!<0pt,-15pt>;"tgt"!<0pt,15pt> ^{B}
    }
  }
  \quad
  \vcenter{
    \xymatrix@R=1.5em@C=1em{
      & aaas
      \ar@3 []!<0pt,-20pt>;[dd]!<0pt,20pt> ^{C}
      \\
      sasas
      \ar@2@/^/ [ur] ^{\gamma as}
      \ar@2@/_/ [dr] _{sa\gamma}
      && aata
      \ar@2@/_/ [ul] _{aa\alpha}
      \\
      & saaa
      \ar@2@/_/ [ur] _{\delta a}
    }
  }
  \quad
  \vcenter{
    \xymatrix@R=1.5em@C=1em{
      & aaaa
      \ar@3 []!<5pt,-20pt>;[dd]!<5pt,20pt> ^{D}
      & aaast
      \ar@2 [l] _{aaa\beta}
      \\
      sasaa
      \ar@2@/^/ [ur] ^{\gamma aa}
      \ar@2@/_/ [dr] _{sa\delta}
      \\
      & saaat
      \ar@2 [r] _{\delta at}
      & aatat
      \ar@2@/_/ [uu] _{aa\alpha t}
    }
  }
\]
\end{example}

%%%%%%%%%%%%%%%%%%%%%%%%%%%%%%%%%%%%%%%%%%%%
\subsection{Homotopical reduction}
%%%%%%%%%%%%%%%%%%%%%%%%%%%%%%%%%%%%%%%%%%%%

%%%%%%%%%%%%%%%%%%%%%%%%%%%%%%%%%%%
\subsubsection{Generic homotopical reduction}
\label{Subsubsection:GenericHomotopicalReduction}

Let $\Sigma$ be a $(3,1)$-polygraph. A \emph{collapsible part of $\Sigma$} is a triple $\Gamma=(\Gamma_2,\Gamma_3,\Gamma_4)$ made of a family $\Gamma_2$ of $2$-cells of $\Sigma$, a family $\Gamma_3$ of $3$-cells of $\Sigma$ and a family~$\Gamma_4$ of $3$-spheres of $\tck{\Sigma}$, such that the following conditions are satisfied:
\begin{itemize}
\item every $\gamma$ of every $\Gamma_k$ is collapsible (potentially up to a Nielsen transformation),
\item no $\gamma$ of any $\Gamma_k$ is redundant for some element of $\Gamma_{k+1}$,
\item there exists well-founded order relations on the $1$-cells, $2$-cells and $3$-cells of $\Sigma$ such that, for every~$\gamma$ in every $\Gamma_k$, the target of $\gamma$ is strictly greater than every generating $(k-1)$-cell that occurs in the source of~$\gamma$.
\end{itemize}
In that case, the recursive assignment
\[
\pi_{\Gamma} (x) \:=\: 
\begin{cases}
\pi_{\Gamma} (s(\gamma)) &\text{if $x=t(\gamma)$ for $\gamma$ in $\Gamma$} \\
1_{\pi_{\Gamma}(s(\gamma))} &\text{if $x=\gamma$ is in $\Gamma$} \\
x &\text{otherwise}
\end{cases}
\]
defines a Tietze transformation $\pi_{\Gamma}:\tck{\Sigma} \fl \tck{\Sigma}/\Gamma$ by well-founded induction, called the \emph{homotopical reduction of $\Sigma$ with respect to $\Gamma$}. The target $(3,1)$-category is freely generated by the $(3,1)$-polygraph~$\Sigma/\Gamma$ obtained from $\Sigma$ by removing the cells of $\Gamma$ and of the corresponding redundant cells, and by replacement of the source and target maps of $\Sigma$ by their compositions with~$\pi_{\Gamma}$. Moreover, by construction, the $(3,1)$-polygraph $\Sigma/\Gamma$ is Tietze-equivalent to $\Sigma$.

%%%%%%%%%%%%%%%%%%%%%%%%%%%%%%%%%%%
\subsubsection{Generating triple confluences}
\label{Subsubsection:TripleConfluences}

The coherent elimination of $3$-cells of a $(3,1)$-polygraph~$\Sigma$ by homotopical reduction requires a collapsible set of $3$-spheres of $\tck{\Sigma}$. When $\Sigma$ is convergent and coherent, its triple critical branchings generate a convenient way to build such a set.

We recall from~\cite{GuiraudMalbos12} that a \emph{local triple branching} is a triple $(f,g,h)$ of rewriting steps with a common source. Like branchings, local triple branchings are classified into three families:
\begin{itemize}
\item \emph{aspherical} triple branchings have two of their $2$-cells equal,
\item \emph{Peiffer} triple branchings have at least one of their $2$-cells that form a Peiffer branching with the other two,
\item \emph{overlap} triple branchings are the remaining local triple branchings.
\end{itemize}
Local triple branchings are ordered by inclusion of their sources and a minimal overlap triple branching is called \emph{critical}. 

If $\Sigma$ is a coherent and convergent $(3,1)$-polygraph, a \emph{triple generating confluence of $\Sigma$} is a $3$-sphere
\[
\xymatrix {
& {\sm v} 
	\ar@2 @/^/ [rr] ^-{f'_1}
	\ar@{} [dr] |-{A}
&& {\sm x'}
	\ar@2 @/^/ [dr] ^-{h''}
&&&&& {\sm v}
	\ar@2 @/^/ [rr] ^-{f'_1}
	\ar@2 [dr] |-{f'_2}
&& {\sm x'}
	\ar@2 @/^/ [dr] ^-{h''}
\\
{\sm u}
	\ar@2 @/^/ [ur] ^-{f}
	\ar@2 [rr] |-{g}
	\ar@2 @/_/ [dr] _-{h}
&& {\sm w} 
	\ar@2 [ur] |-{g'_1}
	\ar@2 [dr] |-{g'_2}
	\ar@{} [rr] |-{C'}
&& {\sm \rep{u}}
& \strut
	\ar@4 [r] ^-*+{\omega}
&& {\sm u}
	\ar@2 @/^/ [ur] ^-{f}
		\ar@{} [rr] |-{C}
	\ar@2 @/_/ [dr] _-{h}
&& {\sm w'}
	\ar@2 [rr] |-{g''}
	\ar@{} [ur] |-{B'}
	\ar@{} [dr] |-{A'}
&& {\sm \rep{u}}
\\
& {\sm x}
	\ar@2 @/_/ [rr] _-{h'_2}
	\ar@{} [ur] |-{B}
&& {\sm v'}
	\ar@2 @/_/ [ur] _-{f''}
&&&&& {\sm x}
	\ar@2 [ur] |-{h'_1}
	\ar@2 @/_/ [rr] _-{h'_2}
&& {\sm v'}
	\ar@2 @/_/ [ur] _-{f''}
}
\]
where $(f,g,h)$ is a triple critical branching of $\Sigma$ and the other cells are obtained as follows. First, we consider the branching $(f,g)$: we use confluence to get $f'_1$ and $g'_1$ and coherence to get the $3$-cell~$A$. We proceed similarly with the branchings $(g,h)$ and $(f,h)$. Then, we consider the branching $(f'_1,f'_2)$ and we use convergence to get~$g''$ and~$h''$ with $\rep{u}$ as common target, plus the $3$-cell $B'$ by coherence. We do the same operation with $(h'_1,h'_2)$ to get $A'$. Finally, we build the $3$-cell $C'$ to relate the parallel $2$-cells $g'_1\star_1 h''$ and $g'_2\star_1 f''$. 

%%%%%%%%%%%%%%%%%%%%%%%%%%%%%%%%%%%
\subsubsection{Homotopical completion-reduction}
\label{Subsubsection:HomotopicalCompletionReduction}

In the applications we consider, homotopical reduction is applied to the homotopical completion $\Sr(\Sigma)$ of a terminating $2$-polygraph $\Sigma$. This induces a collapsible part $\Gamma$ of $\Sr(\Sigma)$ made of
\begin{itemize}
\item some of the generating triple confluences of $\Sr(\Sigma)$,
\item the $3$-cells coherently adjoined with a $2$-cell by homotopical completion to reach confluence, 
\item some collapsible $2$-cells or $3$-cells already present in the initial presentation~$\Sigma$.
\end{itemize}
If $\Sigma$ is a terminating $2$-polygraph, the \emph{homotopical completion-reduction of $\Sigma$} is the $(3,1)$-polygraph
\[
\Rr(\Sigma) \:=\: \pi_{\Gamma}(\Sr(\Sigma))
\]
obtained from the homotopical completion of $\Sigma$ by homotopical reduction with respect to some collapsible part $\Gamma$ of $\Sr(\Sigma)$. The definition and the notation should depend on $\Gamma$, and we make them precise in each application we consider.

%%%%%%%%%%%%%%%%%%%%%%%%%%
\begin{theorem}
\label{Theorem:HomotopicalCompletionReduction}
For every terminating presentation $\Sigma$ of a category $\C$, the homotopical completion-reduction $\Rr(\Sigma)$ of $\Sigma$ is a coherent presentation of $\C$.
\end{theorem}
%%%%%%%%%%%%%%%%%%%%%%%%%%

%%%%%%%%%%%%%%%%%%%%%%%%%%%%%%%%%%%%%%%%%%%
\begin{example}
In Example~\ref{Example:KNCompletion}, we have obtained a coherent convergent presentation $\Sr(\Sigma)$ of $\B_3^+$ by homotopical completion. We consider the collapsible part $\Gamma$ of $\Sr(\Sigma)$ consisting of the two generating triple confluences
\[
\begin{array}{r@{\quad}c@{\quad}l}
  \vcenter{
    \xymatrix@R=2.5em@C=3em{
      & {\sm aata}
      \ar@2 [r] ^{aa\alpha}
      \ar@3 []!<-25pt,-15pt>;[d]!<-25pt,15pt> ^{Ba}
      & {\sm aaas}
      \\
      {\sm sasta}
      \ar@2@/^3ex/ [ur] ^{\gamma ta} _(0.68){}="src1"
      \ar@2 [r] |{sa\beta a} ^(0.574){}="tgt1" _(0.574){}="src2"
      \ar@2@/_3ex/ [dr] _{sas\alpha} ^(0.68){}="tgt2"
      & {\sm saaa}
      \ar@2@/_/ [u] _{\delta a}
      \ar@3 []!<-25pt,-15pt>;[d]!<-25pt,15pt> ^{saA}
      \\
      & {\sm sasas}
      \ar@2@/_/ [u] _{sa\gamma}
    }
  }
  &
  \vcenter{\xymatrix@C=2em{\strut \ar@4 [r] ^-*+{\omega_1} &\strut}}
  &
  \vcenter{
    \xymatrix@R=1.5em@C=2em{
      & {\sm aata}
      \ar@2@/^/ [dr] ^{aa\alpha}
      \\
      {\sm sasta}
      \ar@2@/^/ [ur] ^{\gamma ta}
      \ar@2@/_/ [dr] _{sas\alpha}
      \ar@{} [rr] |{\shortparallel}
      && {\sm aaas}
      \ar@3 []!<0pt,-20pt>;[dd]!<0pt,20pt> ^{C}
      \\
      & {\sm sasas}
      \ar@2 [ur] |{\gamma as}
      \ar@2@/_/ [dr] _{sa\gamma}
      && {\sm aata}
      \ar@2@/_/ [ul] _{aa\alpha}
      \\
      && {\sm saaa}
      \ar@2@/_/ [ur] _{\delta a}
    }
  }
\end{array}
\]
and
\[
\begin{array}{r@{\quad}c@{\quad}l}
  \vcenter{
    \xymatrix@R=2.5em@C=3em{
      & {\sm aaast}
      \ar@2 [r] ^{aaa\beta}
      \ar@3 []!<-10pt,-15pt>;[d]!<-10pt,15pt> _{Ct}
      & {\sm aaaa}
      \\
      {\sm sasast}
      \ar@2@/^3ex/ [ur] ^{\gamma ast}
      \ar@2 [r] |{sa\gamma t} _(0.582){}="src"
      \ar@2@/_3ex/ [dr] _{sasa\beta} ^(0.68){}="tgt"
      & {\sm saaat}
      \ar@2 [r] _{\delta at}
      \ar@3 []!<-25pt,-15pt>;[d]!<-25pt,15pt> ^{saB}
      & {\sm aatat}
      \ar@2 [ul] |{aa\alpha t}
      \\
      & {\sm sasaa}
      \ar@2@/_/ [u] _{sa\delta}
    }
  }
  &
  \vcenter{\xymatrix@C=2em{\strut \ar@4 [r] ^-*+{\omega_2} &\strut}}
  &
  \vcenter{
    \xymatrix@R=1.5em@C=1.9em{
      & {\sm aaast}
      \ar@2@/^/ [dr] ^{aaa\beta}
      \\
     {\sm  sasast}
      \ar@2@/^/ [ur] ^{\gamma ast}
      \ar@2@/_/ [dr] _{sasa\beta}
      \ar@{} [rr] |{\shortparallel}
      && {\sm aaaa}
      \ar@3 []!<10pt,-20pt>;[dd]!<10pt,20pt> ^{D}
      & {\sm aaast}
      \ar@2 [l] _{aaa\beta}
      \\
      & {\sm sasaa}
      \ar@2 [ur] _{\gamma aa}
      \ar@2@/_/ [dr] _{sa\delta}
      \\
      && {\sm saaat}
      \ar@2 [r] _{\delta at}
      & {\sm aatat}
      \ar@2 [uu] |{aa\alpha t}
    }
  }
\end{array}
\]
together with the $3$-cells $A$ and $B$ coherently adjoined with the $2$-cell $\gamma$ and $\delta$ during homotopical completion and the $2$-cell $\beta:st\dfl a$ that defines the redundant generator $a$. We have that $\omega_1$, $\omega_2$, $A$, $B$ and~$\beta$ are collapsible (up to a Nielsen transformation), with respective redundant cells $C$, $D$, $\gamma$, $\delta$ and $a$. We conclude that $\Gamma$ is collapsible with the orders
\[
D > C > B > A,
\qquad
\delta > \gamma > \beta > \alpha,
\qquad
a > t > s.
\]
Thus the homotopical reduction of $\Sr(\Sigma)$ with respect to $\Gamma$ is the $(3,1)$-polygraph
\[
\Rr(\Sigma) \:=\: \big( s, t \;;\; tst \dfl sts \;;\; \emptyset \big).
\]
By Theorem~\ref{Theorem:HomotopicalCompletionReduction}, we recover that the monoid $\B_3^+$ admits a coherent presentation made of Artin's presentation and no $3$-cell.
\end{example}

%%%%%%%%%%%%%%%%%%%%%%%%%%%%%%%%%%%%%%%%%%%%
%%%%%%%%%%%%%%%%%%%%%%%%%%%%%%%%%%%%%%%%%%%%
\section{Garside's coherent presentation of Artin monoids}
\label{Section:GarsideCoherentPresentation}
%%%%%%%%%%%%%%%%%%%%%%%%%%%%%%%%%%%%%%%%%%%%
%%%%%%%%%%%%%%%%%%%%%%%%%%%%%%%%%%%%%%%%%%%%

Recall that a \emph{Coxeter group} is a group $\W$ that admits a presentation with a finite set $S$ of generators and with one relation
\begin{equation}
(st)^{m_{st}} \:=\: 1, \qquad \text{with $m_{st}\in\Nb\amalg\ens{\infty}$},
\label{cox2}
\end{equation}
for every $s$ and $t$ in $S$, with the following requirements and conventions:
\begin{itemize}
\item $m_{st}=\infty$ means that there is, in fact, no relation between $s$ and $t$,
\item $m_{st}=1$ if, and only if, $s=t$.
\end{itemize}
The last requirement implies that $s^2=1$ holds in $\W$ for every $s$ in $S$. As a consequence, the group $\W$ can also be seen as the monoid with the same presentation. Let us note that a given Coxeter group can have several generating sets that fit the given scheme, but we always assume that such a set $S$ has been fixed and comes equipped with a total order.

Following~\cite[(1.1)]{BrieskornSaito72}, we denote by $\angle{st}^n$ the element of length $n$ in the free monoid $S^*$, obtained by multiplication of alternating copies of $s$ and $t$. Formally, this element is defined by induction on $n$ as follows:
\[
\angle{st}^0 \:=\: 1
\qquad\text{and}\qquad
\angle{st}^{n+1} \:=\: s\angle{ts}^n.
\]
When $s\neq t$ and $m_{st}<\infty$, we use this notation and the relations $s^2=t^2=1$ to write~\eqref{cox2} as a \emph{braid relation}:
\begin{equation}
\angle{st}^{m_{st}} \:=\: \angle{ts}^{m_{st}}.
\label{cox2'}
\end{equation}

A \emph{reduced expression} of an element $u$ of $\W$ is a representative of minimal length of $u$ in the free monoid~$S^*$. The \emph{length of $u$} is denoted by $l(u)$ and defined as the length of any of its reduced expressions. The Coxeter group $\W$ is finite if, and only if, it admits an element of maximal length,~\cite[Theorem~5.6]{BrieskornSaito72}; in that case, this element is unique, it is called the \emph{longest element of $\W$} and is denoted by $w_0(S)$. For $I\subseteq S$, the subgroup of $\W$ spanned by the elements of $I$ is denoted by $\W_I$. It is a Coxeter group with generating set $I$. If $\W_I$ is finite, we denote by $w_0(I)$ its longest element.

We recall that the \emph{Artin monoid} associated to $\W$ is the monoid denoted by $\B^+(\W)$, generated by $S$ and subject to the braid relations~\eqref{cox2'}. This presentation, seen as a $2$-polygraph, is denoted by $\Art_2(\W)$ and called \emph{Artin's presentation}: this is the same as the one of $\W$, except for the relations $s^2=1$. 

In this section, we fix a Coxeter group $\W$ and we apply the homotopical completion-reduction method to get a coherent presentation for the Artin monoid $\B^+(\W)$. 

%%%%%%%%%%%%%%%%%%%%%%%%%%%%%%%%%%%%%%%%%%%%
\subsection{Garside's presentation of Artin monoids}
\label{Subsection:Recollections}
%%%%%%%%%%%%%%%%%%%%%%%%%%%%%%%%%%%%%%%%%%%%

We recall some arithmetic properties on Artin monoids, observed by Garside for braid monoids in~\cite{Garside69} and generalised by Brieskorn and Saito in~\cite{BrieskornSaito72}. Garside's presentation is explicitly given in~\cite[1.4.5]{Deligne97} for spherical Artin monoids and in~\cite[Proposition~1.1]{Michel99} for any Artin monoid. We refer to~\cite{GeckPfeiffer00} for proofs.

%%%%%%%%%%%%%%%%%%%%%%%%%%%%%%%%%%%%%%%%%%%%
\subsubsection{Length notation and divisibility}

For every $u$ and $v$ in $\W$, we have $l(uv)\leq l(u)+l(v)$ and we use distinct graphical notations depending on whether the equality holds or not:
\[
\typedeux{1} 
	\quad\Leftrightarrow\quad
l(uv)=l(u)+l(v),
\]
\[
\typedeux{0}
	\quad\Leftrightarrow\quad
l(uv)<l(u)+l(v).
\]
When $w=uv$ holds in $\W$ with \typedeux{1}, we write $w\doteq uv$. We generalise the notation for a greater number of elements of $\W$. For example, in the case of three elements $u$, $v$ and $w$ of $\W$, we write \typetrois{?} when both equalities $l(uv)=l(u)+l(v)$ and $l(vw)=l(v)+l(w)$ hold. This case splits in the following two mutually exclusive subcases:
\[
\typetrois{1} 
	\quad\Leftrightarrow\quad
\begin{cases}
\:\typetrois{} \\
l(uvw)=l(u)+l(v)+l(w),
\end{cases}
\]
\[
\typetrois{0}
	\quad\Leftrightarrow\quad
\begin{cases}
\:\typetrois{} \\
l(uvw)<l(u)+l(v)+l(w).
\end{cases}
\]
If $u$ and $v$ are two elements of $\B^+(\W)$, we say that~\emph{$u$ is a divisor of $v$} and that \emph{$v$ is a multiple of $u$} if there exists an element $u'$ in $\B^+(\W)$ such that $uu'=v$. In that case, the element~$u'$ is uniquely defined and called the \emph{complement of $u$ in $v$}~\cite[Proposition~2.3]{BrieskornSaito72}. Moreover, if $v$ is in~$\W$, seen as an element of $\B^+(\W)$ by the canonical embedding (given by Matsumoto's theorem, see~\cite[Theorem~1.2.2]{GeckPfeiffer00}), then we also have~$u$ and $u'$ in $\W$ and $uu'\doteq v$. If two elements $u$ and $v$ of $\B^+(\W)$ have a common multiple, then they have a least common multiple, lcm for short~\cite[Proposition~4.1]{BrieskornSaito72}.

%%%%%%%%%%%%%%%%%%%%%%%%%%
\subsubsection{Garside's coherent presentation}
\label{Subsubsection:GarsideCoherentPresentation}

Let $\W$ be a Coxeter group. We call \emph{Garside's presentation of $\B^+(\W)$} the $2$-polygraph $\Gar_2(\W)$ whose $1$-cells are the elements of $\W\setminus\ens{1}$ and with one $2$-cell
\[
\alpha_{u,v} \::\: u|v \:\dfl \: uv
\]
whenever $l(uv)=l(u)+l(v)$ holds. Here, we write $uv$ for the product in $\W$ and $u|v$ for the product in the free monoid over $\W$. 
We denote by $\Gar_3(\W)$ the extended presentation of $\B^+(\W)$ obtained from $\Gar_2(\W)$ by adjunction of one $3$-cell
\[
\xymatrix @!C @R=1.5em {
& uv|w
	\ar@2 @/^/ [dr] ^-{\alpha_{uv,w}}
	\ar@3 []!<0pt,-25pt>;[dd]!<0pt,25pt> ^-{A_{u,v,w}}
\\
u|v|w
	\ar@2 @/^/ [ur] ^-{\alpha_{u,v}|w}
	\ar@2 @/_/ [dr] _-{u|\alpha_{v,w}}
&& uvw
\\
& u|vw
	\ar@2 @/_/ [ur] _-{\alpha_{u,vw}}
}
\]
for every $u$, $v$ and $w$ of $\W\setminus\ens{1}$ with \typetrois{1}. 

%%%%%%%%%
\begin{theorem}
\label{Theorem:GarsideCoherentPresentation}
For every Coxeter group $\W$, the Artin monoid $\B^+(\W)$ admits $\Gar_3(\W)$ as a coherent presentation.
\end{theorem}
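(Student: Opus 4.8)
The plan is to run the homotopical completion-reduction of Section~\ref{Section:CompletionReduction} on Garside's presentation $\Gar_2(\W)$ and to identify the outcome with $\Gar_3(\W)$.

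That $\Gar_2(\W)$ presents $\B^+(\W)$ is Garside's theorem, extended by Brieskorn--Saito and Michel, see~\cite{Deligne97,Michel99,GeckPfeiffer00}; and $\Gar_2(\W)$ terminates, since every rewriting step $\alpha_{u,v}\colon u|v\dfl uv$ strictly shortens words over $\W\setminus\ens{1}$, so one may fix the degree-lexicographic termination order induced by a total order on $\W\setminus\ens{1}$. A critical branching of $\Gar_2(\W)$ has source $u|v|w$ with \typetrois{?} and is the pair $(\alpha_{u,v}|w,\ u|\alpha_{v,w})$, with targets $uv|w$ and $u|vw$. If \typetrois{1}, both targets rewrite to $uvw$, by $\alpha_{uv,w}$ and $\alpha_{u,vw}$ respectively; the branching is confluent, and the completion adjoins precisely the $3$-cell $A_{u,v,w}$ of $\Gar_3(\W)$. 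If \typetrois{0}, then $l(uvw)<l(u)+l(v)+l(w)$, the $2$-cells $\alpha_{uv,w}$ and $\alpha_{u,vw}$ do not exist, $uv|w$ and $u|vw$ are distinct normal forms, and homotopical completion adjoins a new $2$-cell between them together with a collapsible $3$-cell having that new $2$-cell as target.

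Since the \typetrois{0}-type $2$-cells create new critical branchings, the completion is in general infinite; by Theorem~\ref{Theorem:HomotopicalCompletion} it produces a coherent convergent presentation $\Sr(\Gar_2(\W))$ of $\B^+(\W)$. The crux is to describe all of its cells, using the arithmetic of $\B^+(\W)$ recalled in~\ref{Subsection:Recollections} (uniqueness of complements, existence of lcm's, the Brieskorn--Saito length identities): the additional $2$-cells are witnesses of equalities between normal forms of one and the same element of $\B^+(\W)$, and every $3$-cell adjoined during completion is either collapsible with such a witness as target, or follows from a generating triple confluence in the sense of~\ref{Subsubsection:TripleConfluences}. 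One then takes for collapsible part $\Gamma$ of $\Sr(\Gar_2(\W))$ the companion $3$-cells of the \typetrois{0}-type branchings and of the branchings they spawn, which collapse the adjoined $2$-cells, together with a suitable family of generating triple confluences collapsing the remaining redundant $3$-cells. Lifting the termination order to the higher cells supplies the required well-founded orders, and one checks that $\Gamma$ involves neither the $\alpha_{u,v}$ nor the $A_{u,v,w}$ with \typetrois{1}; hence the homotopical reduction $\pi_{\Gamma}$ carries $\Sr(\Gar_2(\W))$ exactly onto $\Gar_3(\W)$. By Theorem~\ref{Theorem:HomotopicalCompletionReduction} --- equivalently, by Theorems~\ref{Theorem:HomotopicalCompletion} and~\ref{Theorem:TietzeTransformations} --- the $(3,1)$-polygraph $\Gar_3(\W)$ is a coherent presentation of $\B^+(\W)$; for $\W$ spherical it coincides with the coherence data of~\cite[Theorem~1.5]{Deligne97}.

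The main obstacle is exactly the closed-form description of the infinite completion $\Sr(\Gar_2(\W))$, and the construction of a collapsible part that erases precisely the cells produced by Knuth--Bendix completion while preserving every $A_{u,v,w}$. Both rest on the divisibility theory of Artin monoids; checking the collapsibility conditions --- well-foundedness of the lifted orders and non-redundancy --- together with the attendant termination bookkeeping, is the technical heart of the argument.
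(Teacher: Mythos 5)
Your strategy is exactly the paper's: complete $\Gar_2(\W)$ homotopically, then reduce back down to $\Gar_3(\W)$. But as written the proposal asserts, rather than proves, the two steps on which the theorem actually rests, and you say so yourself in the last paragraph; that is a genuine gap, not a deferrable technicality. First, you never establish that the completion stabilises after a single round of adjunctions, i.e.\ that the new $2$-cells $\beta_{u,v,w}\colon u|vw\dfl uv|w$ (for \typetrois{0}) create only \emph{confluent} critical branchings. This is where the divisibility theory of $\B^+(\W)$ is genuinely used: the paper classifies all branchings involving a $\beta$ (overlaps on one generator, and pairs $\beta_{u,v_1,w_1}$, $\beta_{u,v_2,w_2}$ with the same source $u|v_1w_1=u|v_2w_2$, resolved via the lcm of $v_1,v_2$ and uniqueness of complements) and obtains exactly nine families of $3$-cells $A,\dots,I$. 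Without this closed-form description of $\Sr(\Gar_2(\W))$ there is nothing to reduce. Second, the reduction step requires exhibiting, for each of the families $C,\dots,I$, a generating triple confluence in which that $3$-cell is the (Nielsen-equivalent) redundant target, checking the well-founded order conditions, and verifying that what survives is precisely the $\alpha_{u,v}$ and the $A_{u,v,w}$; ``a suitable family of generating triple confluences'' is a placeholder for the seven explicit $3$-spheres $\omega^C,\dots,\omega^I$ the paper writes down.

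A smaller but real issue: a deglex order induced by an \emph{arbitrary} total order on $\W\setminus\ens{1}$ does not control the orientation of the adjoined $2$-cells, since $u|vw$ and $uv|w$ have the same length in $(\W\setminus\ens{1})^*$; for some triples the completion would then orient the new rule as $uv|w\dfl u|vw$, and the resulting convergent presentation would not be the one you need to reduce onto $\Gar_3(\W)$. The paper instead uses the order that first compares length in $\W^*$ and then the lengths of the components from the right, which forces $u|vw>uv|w$ uniformly because $l(vw)>l(w)$. You should fix this order before running the completion.
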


\noindent
The $(3,1)$-polygraph $\Gar_3(\W)$ is called the \emph{Garside's coherent presentation} of the Artin monoid $\B^+(\W)$. Theorem \ref{Theorem:GarsideCoherentPresentation} is proved in the following section by homotopical completion-reduction of $\Gar_2(\W)$.

%%%%%%%%%%%%%%%%%%%%%%%%%%%%%%%%%%%%%
\subsection{Homotopical completion-reduction of Garside's presentation} 
\label{Subsection:GarsideCompletionReduction}
%%%%%%%%%%%%%%%%%%%%%%%%%%%%%%%%%%%%%

Let us define a termination order on the $2$-polygraph $\Gar_2(\W)$. Let $<$ denote the strict order on the elements of the free monoid $\W^*$ that first compares their length as elements of $\W^*$, and then the length of their components, starting from the right. For example, we have that $u_1|u_2<v_1|v_2|v_3$ (first condition) and $uv|w < u|vw$ if \typetrois{} (second condition). The order relation $\leq$ generated by $<$ by adding reflexivity is a termination order on $\Gar_2(\W)$: for every $2$-cell $\alpha_{u,v}$ of $\Gar_2(\W)$, we have $u|v>uv$. Hence the $2$-polygraph $\Gar_2(\W)$ terminates, so that its homotopical completion is defined.

%%%%%%%%%%%%%%%%%%%%%%%%%%%%%%%%
\begin{proposition}
\label{Proposition:GarsideCoherentConvergentPresentation}
For every Coxeter group $\W$, the Artin monoid $\B^+(\W)$ admits, as a coherent convergent presentation, the $(3,1)$-polygraph $\Sr(\Gar_2(\W))$ with one $0$-cell, one $1$-cell for every element of $\W\setminus\ens{1}$, the $2$-cells
\[
\xymatrix @C=2.5em @!C @R=1.5em {
u|v 
	\ar@2 [r] ^-*+{\alpha_{u,v}} 
& uv
} 
\qquad\text{and}\qquad
\xymatrix @C=2.5em @!C @R=1.5em {
u|vw 
	\ar@2 [r] ^-*+{\beta_{u,v,w}} 
& uv|w,
} 
\]
respectively for every $u$, $v$ of $\W\setminus\ens{1}$ with \typedeux{1} and every $u$, $v$, $w$ of $\W\setminus\ens{1}$ with \typetrois{0}, and the nine families of $3$-cells $A$, $B$, $C$, $D$, $E$, $F$, $G$, $H$, $I$ given in Figure~\ref{Figure:SquierGarside3Cells}.
\end{proposition}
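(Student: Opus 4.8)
The plan is to observe first that, since the termination order $\leq$ defined above makes $\Gar_2(\W)$ a terminating presentation of $\B^+(\W)$ (Garside, Brieskorn--Saito, Michel), the homotopical completion $\Sr(\Gar_2(\W))$ is defined, and by Theorem~\ref{Theorem:HomotopicalCompletion} it is \emph{automatically} a coherent convergent presentation of $\B^+(\W)$. Hence the entire content of the proposition is to perform the completion explicitly and check that it produces exactly the announced cells: besides the $2$-cells $\alpha_{u,v}$ of $\Gar_2(\W)$, the only new $2$-cells are the $\beta_{u,v,w}$ indexed by triples with \typetrois{0}, and the $3$-cells are the nine families $A,\dots,I$ of Figure~\ref{Figure:SquierGarside3Cells}.

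I would start with the critical branchings of $\Gar_2(\W)$ itself. Every $2$-cell $\alpha_{u,v}$ has a source $u|v$ of length two in $\W^*$, so a minimal overlap is a length-three word $u|v|w$ on which both $\alpha_{u,v}|w$ and $u|\alpha_{v,w}$ apply, that is, a configuration \typetrois{?}; this splits into \typetrois{1} and \typetrois{0}. In the first case both $uv|w$ and $u|vw$ rewrite to $uvw$, the branching is confluent, and homotopical completion adjoins the $3$-cell $A_{u,v,w}$. In the second case the pairs $(uv,w)$ and $(u,vw)$ satisfy strict length inequalities, so $\alpha_{uv,w}$ and $\alpha_{u,vw}$ do not exist and $uv|w$, $u|vw$ are normal forms; since $u|vw > uv|w$ for $\leq$, the procedure coherently adjoins the $2$-cell $\beta_{u,v,w}: u|vw \dfl uv|w$ together with the $3$-cell relating $\alpha_{u,v}|w$ to $(u|\alpha_{v,w})\star_1\beta_{u,v,w}$. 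This already produces all the $2$-cells of the statement, and no $\beta$ with a \typetrois{1} index or a different shape is ever created.

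Next I would iterate the completion on the $2$-polygraph with rules $\alpha$ and $\beta$. Since every rule source again has length two, the new critical branchings are either length-three words $p|q|r$ carrying two overlapping redexes --- now of the mixed types $\alpha$--$\beta$, $\beta$--$\alpha$, $\beta$--$\beta$, each further refined by the length configuration of $(p,q,r)$ and, where a $\beta$-redex sits, by the internal factorization it forces --- or length-two words $a|z$ carrying two distinct $\beta$-redexes coming from two factorizations of $z$. The core of the argument is to run through this finite list and prove, using the divisibility and lcm arithmetic of the Artin monoid recalled in~\ref{Subsection:Recollections} (uniqueness of complements, existence of lcm's of elements with a common multiple, good behaviour of the leftmost ``greedy'' factor under the length conditions), that every one of these branchings is confluent; crucially, that closing it up never requires a new $2$-cell, only a new $3$-cell. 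Grouping the resulting generating confluences by the shape of their branching and the corresponding length configuration yields precisely the families $A,\dots,I$ of Figure~\ref{Figure:SquierGarside3Cells}. As no $2$-cell is added after the first round and the order $\leq$ strictly decreases along every rule, the increasing sequence $\Gar_2(\W)=\Sigma^0\subseteq\Sigma^1\subseteq\cdots$ stabilises at $\Sigma^2$, whence $\Sr(\Gar_2(\W))=\Sigma^2$ has exactly the cells claimed.

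The main obstacle is precisely this last case analysis. The delicate points are, first, checking confluence of the $\beta$-involving critical branchings: each diagrammatic confluence must be translated into an identity between products in $\W$ and verified from the Coxeter/Artin arithmetic across all length configurations --- in particular the dichotomy, for an $\alpha$--$\beta$ overlap, between the sub-case where the relevant triple keeps additive length and the sub-case where it does not, which are resolved by visibly different zig-zags. Second, one must ensure the bookkeeping collapses to exactly nine $3$-cell families --- that branchings which look a priori distinct yield $3$-cells expressible through the same family, and conversely that all nine are genuinely needed. Everything else --- convergence, coherence, and that the presented monoid is $\B^+(\W)$ --- is then automatic from Theorem~\ref{Theorem:HomotopicalCompletion}.
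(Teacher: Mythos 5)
Your proposal follows essentially the same route as the paper: termination via the given order, a first round of completion producing the $\beta_{u,v,w}$ and the $3$-cells $A$ and $B$ from the \typetrois{1}/\typetrois{0} dichotomy, and then a classification of the $\beta$-involving critical branchings into overlaps on one generator (the $\alpha$--$\beta$, $\beta$--$\alpha$, $\beta$--$\beta$ configurations giving $C$ through $G$) and equal-source branchings $u|v_1w_1=u|v_2w_2$ resolved by the lcm/complement arithmetic (giving $H$ and $I$), with confluence everywhere so that the process stabilises after one extra round. The case analysis you defer as "the main obstacle" is exactly what the paper's proof carries out, so the plan is correct and matches the paper's argument.
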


\begin{figure}[!ht]
\begin{center}
$\vcenter{\xymatrix @!C @R=1.5em {
& uv|w
	\ar@2 @/^/ [dr] ^-{\alpha_{uv,w}}
	\ar@3 []!<0pt,-25pt>;[dd]!<0pt,25pt> ^-{A_{u,v,w}}
\\
u|v|w
	\ar@2 @/^/ [ur] ^-{\alpha_{u,v}|w}
	\ar@2 @/_/ [dr] _-{u|\alpha_{v,w}}
&& uvw
\\
& u|vw
	\ar@2 @/_/ [ur] _-{\alpha_{u,vw}}
}}$
\hspace{\stretch{1}}
$\vcenter{\xymatrix @!C {
u|v|w
	\ar@2 @/^3ex/ [rr] ^-{\alpha_{u,v}|w} ^{}="src"
	\ar@2 @/_/ [dr] _-{u|\alpha_{v,w}}
&& uv|w
\\
& u|vw
	\ar@2 @/_/ [ur] _-{\beta_{u,v,w}}
\ar@3 "src"!<0pt,-15pt>;[]!<0pt,20pt> ^-{B_{u,v,w}}
}}$
\hspace{\stretch{1}}
$\vcenter{\xymatrix @!C @R=1.5em {
& uv|wx
	\ar@2 @/^/ [dr] ^-{\beta_{uv,w,x}}
	\ar@3 []!<0pt,-25pt>;[dd]!<0pt,25pt> ^-{C_{u,v,w,x}}
\\
u|v|wx
	\ar@2 @/^/ [ur] ^-{\alpha_{u,v}|wx}
	\ar@2 @/_/ [dr] _-{u|\beta_{v,w,x}}
&& uvw|x
\\
& u|vw|x
	\ar@2 @/_/ [ur] _-{\alpha_{u,vw}|x}
}}$
\end{center}
%%%

%%%
\begin{center}
$\vcenter{\xymatrix @!C @C=3em {
u|v|wx
	\ar@2 @/^3ex/ [rrr] ^-{\alpha_{u,v}|wx} ^{}="s"
	\ar@2 @/_/ [dr] _-{u|\beta_{v,w,x}}
&&& uv|wx
\\
& u|vw|x
	\ar@2 [r] _-{\beta_{u,v,w}|x} _{}="t"
& uv|w|x
	\ar@2 @/_/ [ur] _-{uv|\alpha_{w,x}}
\ar@3 "s"!<0pt,-15pt>;"t"!<0pt,15pt> ^-{D_{u,v,w,x}}
}}$
\hspace{\stretch{1}}
$\vcenter{\xymatrix @!C @R=1.5em {
& uv|w|x
	\ar@2 @/^/ [dr] ^-{uv|\alpha_{w,x}}
\ar@3 []!<0pt,-25pt>;[dd]!<0pt,25pt> ^-{E_{u,v,w,x}}
\\
u|vw|x
	\ar@2 @/^/ [ur] ^-{\beta_{u,v,w}|x}
	\ar@2 @/_/ [dr] _-{u|\alpha_{vw,x}}
&& uv|wx
\\
& u|vwx
	\ar@2 @/_/ [ur] _-{\beta_{u,v,wx}}
}}$
\end{center}
%%%

%%%
\begin{center}
$\vcenter{\xymatrix @!C @C=1em @R=1.5em {
&& uv|w|xy
	\ar@2 @/^/ [drr] ^-{uv|\alpha_{w,xy}}
\\
u|vw|xy
	\ar@2 @/^/ [urr] ^-{\beta_{u,v,w}|xy}
	\ar@2 @/_/ [dr] _-{u|\beta_{vw,x,y}}
&&&& uv|wxy
\\
& u|vwx|y
	\ar@2 [rr] _-{\beta_{u,v,wx}|y} _-{}="t"
&& uv|wx|y
	\ar@2 @/_/ [ur] _-{uv|\alpha_{wx,y}}
\ar@3 "1,3"!<0pt,-25pt>;"t"!<0pt,25pt> ^-{F_{u,v,w,x,y}}
}}$
\hspace{\stretch{1}}
$\vcenter{\xymatrix @!C @R=1.5em {
& uv|w|xy
	\ar@2 @/^/ [dr] ^-{uv|\beta_{w,x,y}}
\ar@3 []!<0pt,-25pt>;[dd]!<0pt,25pt> ^-{G_{u,v,w,x,y}}
\\
u|vw|xy
	\ar@2 @/^/ [ur] ^-{\beta_{u,v,w}|xy}
	\ar@2 @/_/ [dr] _-{u|\beta_{vw,x,y}}
&& uv|wx|y
\\
& u|vwx|y
	\ar@2 @/_/ [ur] _-{\beta_{u,v,wx}|y}
}}$
\end{center}
%%%

%%%
\begin{center}
\strut\hspace{\stretch{1}}
$\vcenter{\xymatrix @!C {
& uv | xy
	\ar@2 @/^/ [dr] ^-{\beta_{uv,x,y}}
\\
u | vxy 
	\ar@2 @/^/ [ur] ^-{\beta_{u,v,xy}}
	\ar@2 @/_3ex/ [rr] _-{\beta_{u,vx,y}} _{}="t"
&& uvx | y
\ar@3 "1,2"!<0pt,-20pt>;"t"!<0pt,15pt> ^-{H_{u,v,x,y}}
}}$
\hspace{\stretch{2}}
$\vcenter{\xymatrix @R=1.5em {
& uv_1 | w_1 \:=\: uv_1 | x_1y
	\ar@2 @/^3ex/ [dr] ^-{\beta_{uv_1,x_1,y}}
	\ar@3 []!<0pt,-30pt>;[dd]!<0pt,30pt> ^-{I_{u,v_1,w_1,v_2,w_2}}
\\
*\txt{$u | v_1w_1$ \\ $=$ \\ $u | v_2w_2$}
	\ar@2 @/^3ex/ [ur] ^-{\beta_{u,v_1,w_1}}
	\ar@2 @/_3ex/ [dr] _-{\beta_{u,v_2,w_2}}
&& *\txt{$uv_1x_1 | y$ \\ $=$ \\ $uv_2x_2 | y$}
\\
& uv_2 | w_2 \:=\: uv_2 | x_2y
	\ar@2 @/_3ex/ [ur] _-{\beta_{uv_2,x_2,y}}
}}$
\hspace{\stretch{1}}\strut
\end{center}
\caption{The $3$-cells of the homotopical completion of Garside's presentation}
\label{Figure:SquierGarside3Cells}
\end{figure}
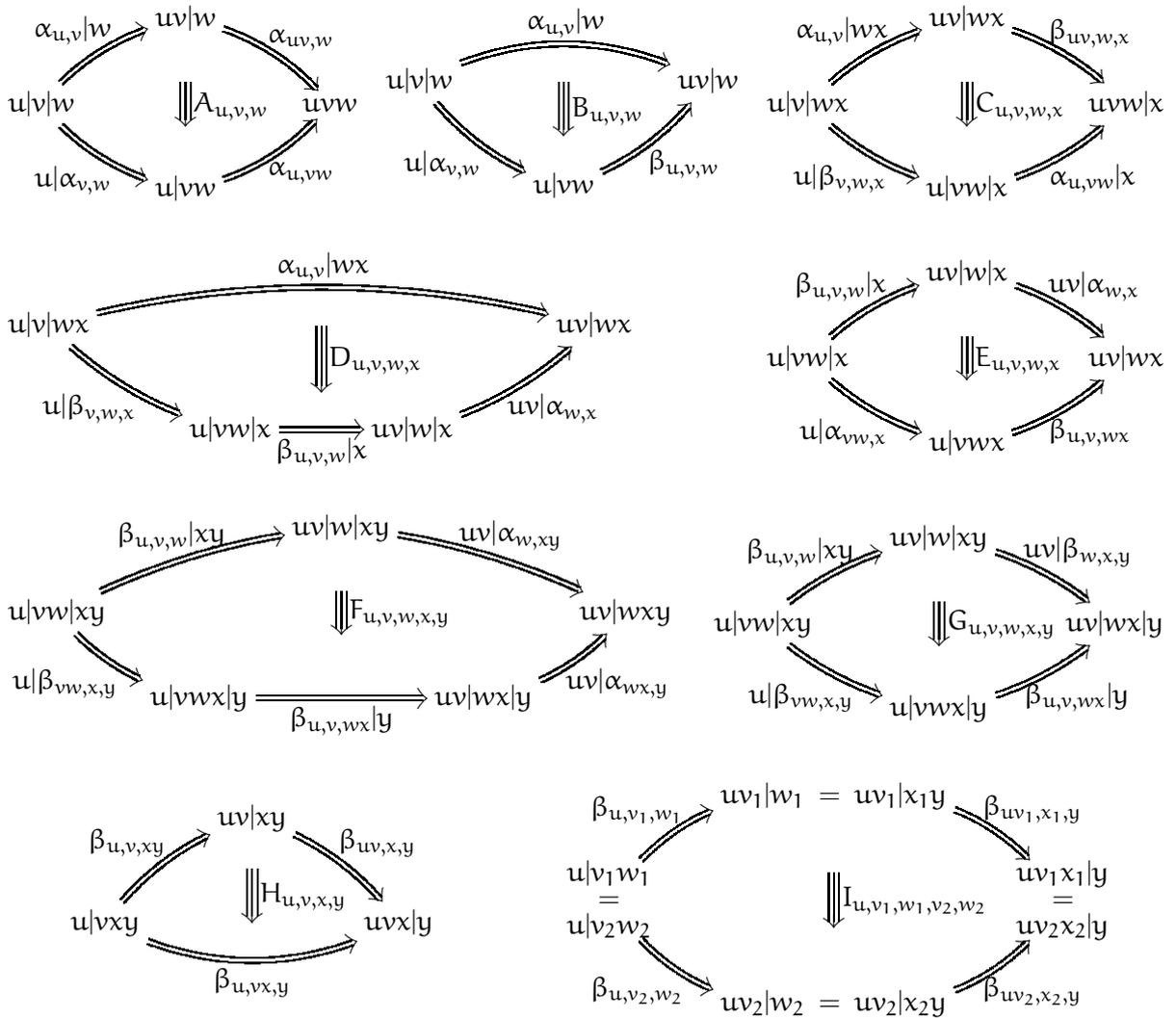

The $3$-cells of Figure~\ref{Figure:SquierGarside3Cells} are families indexed by all the possible elements of $\W\setminus\ens{1}$, deduced by the involved $2$-cells. For example, there is one $3$-cell $A_{u,v,w}$ for every $u$, $v$, $w$ with \typetrois{1}, and one $3$-cell $F_{u,v,w,x,y}$ for every $u$, $v$, $w$, $x$, $y$ with \typecinq{0}{1}{1}{}{0}{}.

\begin{proof}
The $2$-polygraph $\Gar_2(\W)$ has exactly one critical branching for every $u$, $v$ and~$w$ of $\W\setminus\ens{1}$ such that \typetrois{?}:
\[
\xymatrix @!C @C=4em @R=1em {
& uv|w
\\
u|v|w
	\ar@2 @/^2.5ex/ [ur] ^-{\alpha_{u,v}|w}
	\ar@2 @/_2.5ex/ [dr] _-{u|\alpha_{v,w}}
\\
& u|vw
}
\]
Then there are two possibilities. If \typetrois{1}, the branching is confluent, adjoining the $3$-cell $A_{u,v,w}$. Otherwise, we have \typetrois{0} and the branching is not confluent, thus homotopical completion coherently adjoins the $2$-cell $\beta_{u,v,w}$ and the $3$-cell $B_{u,v,w}$. The family $\beta$ of $2$-cells creates new critical branchings, each one being confluent and conducting to the adjunction of one or several $3$-cells. The sources of all the $2$-cells $\alpha$ and $\beta$ have size~$2$ in the free monoid over $\W\setminus\ens{1}$. As a consequence, there are two main cases for the critical branchings that involve at least one $2$-cell~$\beta$. 

The first case occurs when the sources of the $2$-cells of $\Gar_2(\W)$ that generate the branching overlap on one element of $\W\setminus\ens{1}$. The source of such a branching has size $3$, with one $2$-cell of the branching reducing the leftmost two generating $1$-cells and the other one reducing the rightmost two. This leaves three main cases of branchings:
\[
\vcenter{\xymatrix @!C @C=4em @R=1em {
& uv | wx 
\\
u | v | wx
	\ar@2 @/^2.5ex/ [ur] ^-{\alpha_{u,v} | wx}
	\ar@2 @/_2.5ex/ [dr] _-{u | \beta_{v,w,x}}
\\
& u | vw | x
}}
\qquad
\vcenter{\xymatrix @!C @C=4em @R=1em {
& uv | w | x 
\\
u | vw | x
	\ar@2 @/^2.5ex/ [ur] ^-{\beta_{u,v,w} | x}
	\ar@2 @/_2.5ex/ [dr] _-{u | \alpha_{vw,x}}
\\
& u | vwx
}}
\qquad
\vcenter{\xymatrix @!C @C=4em @R=1em {
& uv | w | xy 
\\
u | vw | xy
	\ar@2 @/^2.5ex/ [ur] ^-{\beta_{u,v,w} | xy}
	\ar@2 @/_2.5ex/ [dr] _-{u | \beta_{vw,x,y}}
\\
& u | vwx | y
}}
\]
The first branching occurs when \typequatre{?}{0}{}, splitting into the two disjoint possibilities \typequatre{1}{0}{} and \typequatre{0}{0}{}, respectively corresponding to the $3$-cells $C_{u,v,w,x}$ and $D_{u,v,w,x}$. The second branching appears when \typequatre{0}{1}{} and corresponds to the $3$-cell $E_{u,v,w,x}$. The third branching happens when \typecinq{0}{1}{?}{}{}{}, with the extra condition that $l(vwxy) < l(vw) + l(xy)$ since $vw|xy$ is the source of the $2$-cell $\beta_{vw,x,y}$: this situation splits into the two disjoint possibilities \typecinq{0}{1}{1}{}{0}{} and \typecinq{0}{1}{0}{}{}{}, respectively corresponding to the $3$-cells $F_{u,v,w,x,y}$ and $G_{u,v,w,x,y}$.

The second main case occurs when the $2$-cells of $\Gar_2(\W)$ that generate the branching have the same source. Since one of those $2$-cells must be a $\beta$, the source must have shape $u|v_1w_1$ with \typetroisbase{$u$}{$v_1$}{$w_1$}, preventing the other $2$-cell to be an $\alpha$. The only remaining possibility is to have a different decomposition $v_1w_1=v_2w_2$, with \typetroisbase{$u$}{$v_2$}{$w_2$}{0}, so that the branching is as follows:
\[
\xymatrix @!C @C=4em @R=0em {
& uv_1 | w_1
\\
*\txt{$u | v_1w_1$ \\ $=$ \\ $u | v_2w_2$}
	\ar@2 @/^3ex/ [ur] ^-{\beta_{u,v_1,w_1}}
	\ar@2 @/_3ex/ [dr] _-{\beta_{u,v_2,w_2}}
\\
& uv_2 | w_2
}
\] 
The properties of Artin monoids ensure that we have the following relations in $\B^+(\W)$:
\[
\xymatrix @C=2em {
& \cdot
	\ar@/^/ [drrr] ^-{w_1}
	\ar [dr] |-*+{x_1} 
	\ar@{} [drrr] _{\sm =}
\\
\cdot
	\ar@/^/ [ur] ^-{v_1}
	\ar@/_/ [dr] _-{v_2}
	\ar@{} [rr] |(.45){\sm =}
&& \cdot
	\ar [rr] |-*+{y}
&& \cdot 
\\
& \cdot
	\ar [ur] |-*+{x_2}
	\ar@/_/ [urrr] _-{w_2}
	\ar@{} [urrr] ^{\sm =}
}
\]
Indeed, we note that the elements $v_1$ and $v_2$ have a common multiple since $v_1w_1=v_2w_2$. Hence, they admit an lcm. The elements $x_1$ and $x_2$ are respectively defined as the complements of $v_1$ and $v_2$ in their lcm. The element $y$ is the complement of the lcm $v_1x_1=v_2x_2$ of $v_1$ and $v_2$ in their common multiple $v_1w_1=v_2w_2$. By uniqueness of the complements of $v_1$ and $v_2$ in $v_1w_1=v_2w_2$, we get $w_1=x_1y$ and $w_2=x_2y$. Moreover, we have \typetroisbase{$v_1$}{$x_1$}{$y$}{1} and \typetroisbase{$v_2$}{$x_2$}{$y$}{1}. Finally, from the hypothesis \typetroisbase{$u$}{$v_1$}{$w_1$}, we get that $y\neq 1$. Then, there are two possible subcases for the confluence diagram, depending on $x_1$ and $x_2$. The first subcase is when we have either $x_1=1$ or $x_2=1$. We note that both cannot happen at the same time, otherwise $v_1=v_2$ and $w_1=w_2$, so that the branching would be aspherical and not critical. We get the $3$-cell $H_{u,v,x,y}$ if $x_2=1$, inducing $v_2=v_1x_1$, $w_1=x_1y$ and $w_2=y$, with $v=v_1$ and $x=x_1$. The second subcase, when $x_1\neq 1$ and $x_2\neq 1$ gives the $3$-cell $I_{u,v_1,w_1,v_2,w_2}$.
\end{proof}

%%%%%%%%%%%%%%%%%%%%%%%%
\subsubsection{Homotopical reduction of \pdf{\Sr(\Gar_2(\W))}}

We consider the following generating triple confluences, associated to some of the triple critical branchings of $\Sr(\Gar_2(\W))$:
\renewcommand{\objectstyle}{\scriptstyle}
\renewcommand{\labelstyle}{\scriptstyle}
 \begin{itemize}
\item The $3$-sphere $\omega^C_{u,v,w,x}$ in the case \typequatre{1}{0}{}:
\[
\!\!\!\!\!\!\!\!\!\!
\!\!\!\!\!\!\!
\vcenter{\xymatrix @!C @C=1em{
& uv | w | x
	\ar@2@/^/ [rr] ^-{\alpha_{uv,w} | x}
	\ar@{} [dr] |{\displaystyle A_{u,v,w} | x}
&& uvw | x
\\
u | v | w | x
	\ar@2@/^/ [ur] ^{\alpha_{u,v} | w | x}
	\ar@2 [rr] |-{u | \alpha_{v,w} | x} ^-{}="s"
	\ar@2@/_/ [dr] _{u | v | \alpha_{w,x}}
&& u | vw | x
	\ar@2@/_/ [ur] _{\alpha_{u,vw} | x}
\\
& u | v | wx
	\ar@2@/_/ [ur] _{u | \beta_{v,w,x}} 
\ar@{} "s";[] |(0.4){\displaystyle u|B_{v,w,x}}
}}
\quad\qfl\quad
\vcenter{\xymatrix @!C @C=1em{
& uv | w | x
	\ar@2 [dr] |-{uv | \alpha_{w,x}}
	\ar@2 @/^3ex/ [drrr] ^-{\alpha_{uv,w}|x} ^-{}="s"
	\ar@{} [dd] |-{\displaystyle =}
\\
u | v | w | x
	\ar@2@/^/ [ur] ^-{\alpha_{u,v} | w | x}
	\ar@2@/_/ [dr] _-{u | v | \alpha_{w,x}}
&& uv | wx
	\ar@2 [rr] |-{\beta_{uv,w,x}}
	\ar@{} [dr] |-{\displaystyle C_{u,v,w,x}}
&& uvw | x
\\
& u | v | wx
	\ar@2 [ur] |-{\alpha_{u,v} | wx}
	\ar@2@/_/ [rr] _-{u | \beta_{v,w,x}} 
&& u | vw | x
	\ar@2@/_/ [ur] _-{\alpha_{u,vw} | x}
\ar@{} "s";"2,3" |(0.33){\displaystyle B_{uv,w,x}}
}}
\]
 
\item The $3$-sphere $\omega^D_{u,v,w,x}$ in the case \typequatre{0}{0}{}:
\[
\!\!\!\!\!\!\!\!\!\!
\!\!\!\!\!\!\!
\vcenter{\xymatrix @!C @C=1em{
& uv | w | x
	\ar@2@/^/ [rr] ^-{uv | \alpha_{w,x}}
&& uv | wx
\\
u | v | w | x
	\ar@2@/^/ [ur] ^-{\alpha_{u,v} | w | x}
	\ar@2 [rr] |-{u | \alpha_{v,w} | x} ^-{}="s" _-{}="t"
	\ar@2@/_/ [dr] _-{u | v | \alpha_{w,x}}
&& u | vw | x
	\ar@2@/_/ [ul] _(0.33){\beta_{u,v,w} | x}
\\
& u | v | wx
	\ar@2@/_/ [ur] _-{u | \beta_{v,w,x}} 
\ar@{} "1,2";"t" |(0.66){\displaystyle B_{u,v,w} | x}
\ar@{} "s";"3,2" |(0.33){\displaystyle u | B_{v,w,x}}
}}
\quad\qfl\quad
\vcenter{\xymatrix@C=1em{
& uv | w | x
	\ar@2@/^/ [dr] ^-{uv | \alpha_{w,x}}
	\ar@{} [dd] |-{\displaystyle =}
\\
u | v | w | x
	\ar@2@/^/ [ur] ^{\alpha_{u,v} | w | x}
	\ar@2@/_/ [dr] _-{u | v | \alpha_{w,x}}
&& uv | wx
	\ar@{} [dr] |-{\displaystyle D_{u,v,w,x}}
&& uv | w | x
	\ar@2@/_/ [ll] _-{uv | \alpha_{w,x}}
\\
& u | v | wx
	\ar@2 [ur] |-{\alpha_{u,v} | wx}
	\ar@2@/_/ [rr] _-{u | \beta_{v,w,x}} 
&& u | vw | x
	\ar@2@/_/ [ur] _-{\beta_{u,v,w} | x}
}}
\]

\item The $3$-sphere $\omega^E_{u,v,w,x}$ in the case \typequatre{0}{1}{}:
\[
\!\!\!\!\!\!\!\!\!\!
\!\!\!\!\!\!\!
\vcenter{\xymatrix @!C @C=1em{
& uv | w | x
	\ar@2 @/^3ex/ [drrr] ^-{uv | \alpha_{w,x}}
\\
u | v | w | x
	\ar@2@/^/ [ur] ^-{\alpha_{u,v} | w | x}
	\ar@2 [rr] |-{u | \alpha_{v,w} | x} _-{}="t"
	\ar@2@/_/ [dr] _-{u | v | \alpha_{w,x}}
	\ar@{} "1,2";"t" |(0.75){\displaystyle B_{u,v,w} | x} 
&& u | vw | x
	\ar@2 [ul] |-{\beta_{u,v,w} | x}
	\ar@2 [dr] |-{u | \alpha_{vw,x}}
	\ar@{} [dl] |-{\displaystyle u | A_{v,w,x}}
	\ar@{} [rr] ^(0.4){\displaystyle E_{u,v,w,x}}
&& uv | wx
\\
& u | v | wx
	\ar@2@/_/ [rr] _-{u | \alpha_{v,wx}} 
&& u | vwx
	\ar@2@/_/ [ur] _-{\beta_{u,v,wx}}
}}
\quad\qfl\quad
\vcenter{\xymatrix @!C @C=1em{
& uv | w | x
	\ar@2@/^/ [dr] ^-{uv | \alpha_{w,x}}
	\ar@{} [dd] |-{\displaystyle =}
&& 
\\
u | v | w | x
	\ar@2@/^/ [ur] ^-{\alpha_{u,v} | w | x}
	\ar@2@/_/ [dr] _-{u | v | \alpha_{w,x}}
&& uv | wx
\\
& u | v | wx
	\ar@2 [ur] |-{\alpha_{u,v} | wx}
	\ar@2@/_/ [rr] _-{u | \alpha_{v,wx}} _-{}="t"
	\ar@{} "2,3";"t" |(0.75){\displaystyle B_{u,v,wx} }	
&& u | vwx
	\ar@2@/_/ [ul] _-{\beta_{u,v,wx}}
}}
\]

\item The $3$-sphere $\omega^F_{u,v,w,x,y}$ in the case \typecinq{0}{1}{1}{}{0}{}:
\[
\!\!\!\!\!\!\!\!\!\!
\!\!\!\!\!\!\!
\vcenter{\xymatrix @!C @C=1em{
& uv | w | x | y
	\ar@2@/^/ [rr] ^-{uv | \alpha_{w,x} | y}
	\ar@{} [dr] |(0.4){\displaystyle E_{u,v,w,x} | y}
&& uv | wx | y
	\ar@2@/^/ [d] ^-{uv | \alpha_{wx,y}}
\\
u | vw | x | y
	\ar@2@/^/ [ur] ^-{\beta_{u,v,w} | x | y}
	\ar@2 [rr] |-{u | \alpha_{vw,x} | y} ^-{}="s"
	\ar@2@/_/ [dr] _-{u | vw | \alpha_{x,y}}
&& u | vwx | y
	\ar@2 [ur] |(0.4){\beta_{u,v,wx} | y}
& uv | wxy
\\
& u | vw | xy
	\ar@2@/_/ [ur] _-{u | \beta_{vw,x,y}} 
\ar@{} "s";[] |(0.33){\displaystyle u|B_{vw,x,y}}
}}
\:\qfl\:
\vcenter{\xymatrix@C=1em{
& uv | w | x | y
	\ar@2 [rr] ^-{uv | \alpha_{w,x} | y}
	\ar@2 [dr] |-{uv | w | \alpha_{x,y}}
&& uv | wx | y
	\ar@2 [dr] |-{uv | \alpha_{wx,y}}
	\ar@{} [dl] |-{\displaystyle uv | A_{w,x,y}}
\\
u | vw | x | y
	\ar@2@/^/ [ur] ^-{\beta_{u,v,w} | x | y}
	\ar@2@/_/ [dr] _-{u | vw | \alpha_{x,y}}
	\ar@{} [rr] |-{\displaystyle =}
&& uv | w | xy
	\ar@2 [rr] |-{uv | \alpha_{w,xy}}
&& uv | wxy
\\
& u | vw | xy
	\ar@2 [ur] |-{\beta_{u,v,w} | xy}
	\ar@2@/_/ [dr] _-{u | \beta_{vw,x,y}}
\\
&& u | vwx | y
	\ar@2 [rr] _-{\beta_{u,v,wx} | y} _-{}="t"
&& uv | wx | y
	\ar@2 [uu] |-{uv | \alpha_{wx,y}}
\ar@{} "2,3";"4,3" ^-{\displaystyle F_{u,v,w,x,y}}
}}
\]

\item The $3$-sphere $\omega^G_{u,v,w,x,y}$ in the case \typecinq{0}{1}{0}{}{}{}:
\[
\!\!\!\!\!\!\!\!\!\!
\!\!\!\!\!\!\!
\vcenter{\xymatrix @!C @C=1em{
& uv | w | xy
	\ar@2@/^/ [rr] ^-{uv | \beta_{w,x,y}}
&& uv | wx | y
\\
u | v | w | xy
	\ar@2@/^/ [ur] ^-{\alpha_{u,v} | w | xy}
	\ar@2 [rr] |-{u | \alpha_{v,w} | xy} _-{}="t"
	\ar@2@/_/ [dr] _-{u | v | \beta_{w,x,y}}
&& u | vw | xy
	\ar@2 [ul] _-{\beta_{u,v,w} | xy}
	\ar@2 [dr] |-{u | \beta_{vw,x,y}}
	\ar@{} [dl] |-{\displaystyle u | C_{v,w,x,y}}
	\ar@{} [ur] |(0.33){\displaystyle G_{u,v,w,x,y}}
\\
& u | v | wx | y
	\ar@2@/_/ [rr] _-{u | \alpha_{v,wx} | y} 
&& u | vwx | y
 	\ar@2@/_/ [uu] _-{\beta_{u,v,wx} | y} 
 \ar@{} "1,2";"t" |(0.7){\displaystyle B_{u,v,w} | xy}
}}
\quad\qfl\quad
\vcenter{\xymatrix@C=1em{
& uv | w | xy
	\ar@2@/^/ [dr] ^-{uv | \beta_{w,x,y}}
	\ar@{} [dd] |-{\displaystyle =}
\\
u | v | w | xy
	\ar@2@/^/ [ur] ^-{\alpha_{u,v} | w | xy}
	\ar@2@/_/ [dr] _-{u | v | \beta_{w,x,y}}
&& uv | wx | y
\\
& u | v | wx | y
 	\ar@2 [ur] |-{\alpha_{u,v} | wx | y}
	\ar@2@/_/ [rr] _-{u | \alpha_{v,wx} | y} _-{}="t"
&& u | vwx | y
	\ar@2@/_/ [ul] _-{\beta_{u,v,wx} | y}
\ar@{} "2,3";"t" |(0.75){\displaystyle B_{u,v,wx} | y}
}}
\]

\item The $3$-sphere $\omega^H_{u,v,w,x}$ in the case \typequatre{1}{1}{0}:
\[
\!\!\!\!\!\!\!\!\!\!
\!\!\!\!\!\!\!
\vcenter{\xymatrix @!C @C=1em{
& uv | w | x
	\ar@2@/^/ [rr] ^-{\alpha_{uv,w} | x}
	\ar@{} [dr] |-{\displaystyle A_{u,v,w} | x}
&& uvw | x
\\
u | v | w | x
	\ar@2@/^/ [ur] ^-{\alpha_{u,v} | w | x}
	\ar@2 [rr] |-{u | \alpha_{v,w} | x}
	\ar@2@/_/ [dr] _-{u | v | \alpha_{w,x}}
&& u | vw | x
	\ar@2 [ur] |-{\alpha_{u,vw} | x}
	\ar@2 [dr] |-{u | \alpha_{vw,x}}
	\ar@{} [dl] |-{\displaystyle u | A_{v,w,x}}
\\
& u | v | wx
	\ar@2@/_/ [rr] _-{u | \alpha_{v,wx}} 
&& u | vwx
 	\ar@2 @/_6ex/ [uu] |(0.66){\beta_{u,vw,x}} _-{}="t"
\ar@{} "2,3";"t" |-{\displaystyle B_{u,vw,x}}
}}
\quad\qfl\quad
\vcenter{\xymatrix @!C @C=1em{
& uv | w | x
	\ar@2@/^/ [rr] ^-{\alpha_{uv,w} | x} ^-{}="s1"
	\ar@2 [dr] |-{uv | \alpha_{w,x}}
	\ar@{} [dd] |-{\displaystyle =}
&& uvw | x
\\
u | v | w | x
	\ar@2@/^/ [ur] ^-{\alpha_{u,v} | w | x}
	\ar@2@/_/ [dr] _-{u | v | \alpha_{w,x}}
&& uv | wx
	\ar@2 [ur] |-{\beta_{uv,w,x}}
\\
& u | v | wx
 	\ar@2 [ur] |-{\alpha_{u,v} | wx}
	\ar@2@/_/ [rr] _-{u | \alpha_{v,wx}} _-{}="t1"
&& u | vwx
	\ar@2 [ul] |-{\beta_{u,v,wx}}
	\ar@2 @/_6ex/ [uu] |(0.66){\beta_{u,vw,x}} _-{}="t2"
\ar@{} "s1";"2,3" |(0.25){\displaystyle B_{uv,w,x}}
\ar@{} "2,3";"t1" |(0.75){\displaystyle B_{u,v,wx}}
\ar@{} "2,3";"t2" |-{\displaystyle H_{u,v,w,x}}
}}
\]

\item The $3$-sphere $\omega^I_{u,v_1,w_1,v_2,w_2}$ in the case \typetroisbase{$u$}{$v_1$}{$w_1$}{0} and \typetroisbase{$u$}{$v_2$}{$w_2$}{0} with $v_1w_1=v_2w_2$:
\[
\vcenter{\xymatrix@C=2em {
&& uv_1 | w_1 
	\ar@2@/^2ex/ [drr] ^*+{\beta_{uv_1,x_1,y}}
	\ar@{} [d] |-{\displaystyle I_{u,v_1,w_1,v_2,w_2}}
\\
u | v_1w_1
	\ar@2@/^/ [urr] ^*+{\beta_{u,v_1,w_1}}
	\ar@2 [rr] |-{\beta_{u,v_2,w_2}}
	\ar@2 @/_8ex/ [rrrr] _-{\beta_{u,v_1x_1,y}} _-{}="t"
&& uv_2 | w_2
	\ar@2 [rr] |-{\beta_{uv_2,x_2,y}}
	\ar@{} [];"t" |-{\displaystyle H_{u,v_2,x_2,y}}
&& uv_1x_1 | y
}}
\quad\qfl\quad
\vcenter{\xymatrix@C=1.5em {
&& uv_1 | w_1 
	\ar@2@/^/ [drr] ^*+{\beta_{uv_1,x_1,y}}
\\
u | v_1w_1
	\ar@2@/^/ [urr] ^*+{\beta_{u,v_1,w_1}}
	\ar@2 @/_8ex/ [rrrr] _-{\beta_{u,v_1x_1,y}} _-{}="t"
&&&& uv_1x_1 | y
\ar@{} "1,3";"t" |-{\displaystyle H_{u,v_1,x_1,y}}
}}
\]
\end{itemize}
\renewcommand{\objectstyle}{\displaystyle}
\renewcommand{\labelstyle}{\displaystyle}
We consider the collapsible part $\Gamma$ of $\Sr(\Gar_2(\W))$ made of each of those $3$-spheres and all the $3$-cells $B_{u,v,w}$, with the order $I>H>\cdots>C$. The homotopical reduction of $\Sr(\Gar_2(\W))$ with respect to $\Gamma$ is exactly Garside's coherent presentation $\Gar_3(\W)$, ending the proof of Theorem~\ref{Theorem:GarsideCoherentPresentation}.

%%%%%%%%%%%%%%
\subsection{Garside's coherent presentation for Garside monoids}

Garside monoids have been introduced as a generalisation of spherical Artin monoids by Dehornoy and Paris~\cite{DehornoyParis99,Dehornoy02} to abstract the arithmetic properties observed by Garside on braid monoids~\cite{Garside69} and by Brieskorn-Saito and Deligne on spherical Artin monoids~\cite{BrieskornSaito72,Deligne72}. We refer the reader to \cite{DehornoyDigneGodelleKrammerMichel13} for a unified treatment of Garside structure.

We fix a Garside monoid~$\M$ and we follow~\cite{GebhardtGonzales10} for most of the terminology and notation. 

%%%
\subsubsection{Recollections on Garside monoids}

In the monoid~$\M$, all elements $u$ and $v$ admit a greatest common divisor $u\wedge v$. Moreover, the monoid~$\M$ has a Garside element, denoted by $w_0$, such that the set~$\W$ of its divisors generates~$\M$. The complement of an element $u$ of $\W$ in $w_0$ is denoted by $\dr(u)$. A pair $(u,v)$ of elements of $\W$ is \emph{left-weighted} if we have $\dr(u)\wedge v=1$. For each pair $(u,v)$ of elements of~$\W$, there exists a unique left-weighted pair $(u',v')$ of elements of~$\W$ such that $uv=u'v'$ holds in $\M$: we take $u'=u(\dr(u)\wedge v)$ and~$v'$ to be the complement of $\dr(u)\wedge v$ in $v$. The operation transforming $(u,v)$ into $(u',v')$ is called \emph{local sliding}. It induces a computational process that transforms any element~$u$ of~$\W^*$ into its \emph{(left) normal form} by a finite sequence of local slidings, thereafter represented by dashed arrows:
\[
\xymatrix{
u 
	\ar@{-->} [r]
& (\cdots)	
	\ar@{-->} [r]
& {\rep{u}}.
}
\]
Moreover, two elements $u$ and $v$ of $\W^*$ represent the same element of $\M$ if, and only if, they have the same normal form, so that they are linked by a finite sequence of local slidings and their inverses:
\[
\xymatrix{
u 
	\ar@{-->} [r] 
& {\rep{u}}
& v.
	\ar@{-->} [l]
}
\]

%%%
\subsubsection{Garside's presentation}

First, let us note that, since the set $\W$ of divisors of $w_0$ generates $\M$, then so does $\W\setminus\ens{1}$. Given two elements $u$ and $v$ of~$\W\setminus\ens{1}$, we use the notations \typedeux{1} and \typedeux{0} to mean
\[
\typedeux{1} 
	\quad\Leftrightarrow\quad
\dr(u)\wedge v = 1,
\]
\[
\typedeux{0}
	\quad\Leftrightarrow\quad
\dr(u)\wedge v \neq 1.
\]
We define \emph{Garside's presentation of $\M$} as the $2$-polygraph $\Gar_2(\M)$ with one $0$-cell, one $1$-cells for every element of $\W\setminus\ens{1}$ and one $2$-cell
\[
\xymatrix @C=2.5em @!C {
u|v 
	\ar@2 [r] ^-{\alpha_{u,v}} 
& uv
} 
\]
for every $u$ and $v$ in $\W\setminus\ens{1}$ such that \typedeux{1} holds. 

Let us check that Garside's presentation is, indeed, a presentation of the monoid $\M$. If \typedeux{1} holds, transforming $u|v$ into $uv$ is a local sliding since $uv$ is the normal form of $u|v$, so that each $2$-cell~$\alpha_{u,v}$ is an instance of local sliding. Conversely, if $u|vw$ is transformed into $uv|w$ by local sliding, this implies, in particular, that both \typedeux{1} and \typedeuxbase{$v$}{$w$}{1} hold. Thus, the composite $2$-cell
\[
\xymatrix @C=3em @!C{
& u|v|w 
	\ar@2@/^/ [dr] ^-*+{\alpha_{u,v}|w}
\\
u|vw 
	\ar@2@/^/ [ur] ^-*+{u|\alpha^-_{v,w}}
	\ar@{-->} [rr] 
&& uv|w
}
\]
corresponds to the local sliding transformation applied to $u|vw$. We define \emph{Garside's coherent presentation} $\Gar_3(\M)$ as done in~\ref{Subsubsection:GarsideCoherentPresentation} for Artin monoids. The proof of Theorem~\ref{Theorem:GarsideCoherentPresentation} adapts in a straightforward way to this case.

%%%%%%%%%
\begin{theorem}
\label{Theorem:GarsideGarside}
Every Garside monoid $\M$ admits $\Gar_3(\M)$ as a coherent presentation.
\end{theorem}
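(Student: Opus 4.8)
The plan is to rerun, essentially verbatim, the argument of Theorem~\ref{Theorem:GarsideCoherentPresentation}, translating the arithmetic of the Coxeter group into that of the Garside monoid $\M$. The dictionary is: the length function $l$ on $\W$ is replaced by the norm $\nu$ on $\M$ coming from atomicity (so $\nu(u)\geq 1$ for $u\neq 1$ and $\nu(uv)\geq\nu(u)+\nu(v)$); reduced expressions become the normal forms produced by local slidings; the condition \typedeux{1}, now meaning that $u$ and $v$ merge into a simple, plays the role of $l(uv)=l(u)+l(v)$; and the uniqueness of complements and existence of lcms in $\B^+(\W)$ are replaced by right-cancellativity and the lattice structure of $\M$. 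Since the divisor set $\W$ of $w_0$ is finite, $\Gar_2(\M)$ is moreover a finite $2$-polygraph.

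Concretely, I would proceed in three steps. First, set up the termination order: on $\W^*$, compare the number of components, then the sequence of norms of the components read from the right; each $2$-cell $\alpha_{u,v}\colon u|v\fl uv$ strictly decreases the number of components, while each $\beta_{u,v,w}\colon u|vw\fl uv|w$ adjoined by completion keeps it constant and replaces the rightmost component $vw$ by $w$ with $\nu(w)<\nu(vw)$, so this is a termination order and the homotopical completion of $\Gar_2(\M)$ is defined. Second, rerun the homotopical completion exactly as in the proof of Proposition~\ref{Proposition:GarsideCoherentConvergentPresentation}: all $2$-cell sources still have two components in $\W^*$, so the critical-branching analysis is formally unchanged — one confluent branching with $3$-cell $A_{u,v,w}$ for each $u,v,w$ with \typetrois{1}, the nonconfluent ones adjoining $\beta_{u,v,w}$ and $B_{u,v,w}$ in case \typetrois{0}, the branchings overlapping on one generator producing $C,D,E$ (size-$3$ sources, cases \typequatre{1}{0}{}, \typequatre{0}{0}{}, \typequatre{0}{1}{}) and $F,G$ (size-$4$ sources, cases \typecinq{0}{1}{1}{}{0}{}, \typecinq{0}{1}{0}{}{}{}), and the branchings with a common source $u|v_1w_1=u|v_2w_2$ coming from two decompositions producing $H$ and $I$; this gives a coherent convergent presentation $\Sr(\Gar_2(\M))$ with $2$-cells $\alpha,\beta$ and the nine families $A,\dots,I$ of Figure~\ref{Figure:SquierGarside3Cells}. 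Third, rerun the homotopical reduction: the collapsible part $\Gamma$ is made of the generating triple confluences $\omega^C_{u,v,w,x},\dots,\omega^I_{u,v_1,w_1,v_2,w_2}$ displayed in Section~\ref{Subsection:GarsideCompletionReduction} together with all the $3$-cells $B_{u,v,w}$, with the order $I>\cdots>C$; these diagrams only involve $\alpha,\beta$ and $A,\dots,I$ and refer in no way to the Coxeter structure, so they remain valid generating triple confluences over $\M$. Collapsing $\Gamma$ eliminates $\beta$ and $C,\dots,I$, leaving exactly $\Gar_3(\M)$, which by Theorem~\ref{Theorem:HomotopicalCompletionReduction} is a coherent presentation of $\M$.

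The main obstacle is verifying this dictionary rather than inventing anything new, and the single delicate entry is the ``two decompositions'' family: there one must recover, purely from the Garside axioms, the commutative hexagon of the Artin proof. Given $v_1w_1=v_2w_2\in\W$, the elements $v_1,v_2$ have this common right-multiple, hence admit a right-lcm, which divides $w_0$ and is therefore again a simple; writing $x_1,x_2$ for the complements of $v_1,v_2$ in that lcm and $y$ for the complement of the lcm in $v_1w_1=v_2w_2$, right-cancellativity and uniqueness of complements give $w_1=x_1y$, $w_2=x_2y$, with $(v_ix_i,y)$ compatible, all auxiliary elements remaining simple — this is exactly the lattice content of the Garside axioms. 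Once this entry and the existence of the norm $\nu$ are in place, the remainder of the proof is a formal repetition of Section~\ref{Subsection:GarsideCompletionReduction}.
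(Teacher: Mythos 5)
Your proposal is correct and takes the same route as the paper, whose entire proof of this statement is the observation that $\Gar_2(\M)$ presents $\M$ via local slidings together with the remark that the homotopical completion-reduction argument of Theorem~\ref{Theorem:GarsideCoherentPresentation} ``adapts in a straightforward way''; your three steps (termination via the atomic norm, the unchanged classification of critical branchings into the families $A$ through $I$, and the same collapsible part built from the generating triple confluences and the cells $B_{u,v,w}$) are precisely that adaptation spelled out. You also correctly isolate the one non-formal ingredient --- the existence of right-lcms of simples, their simplicity, and cancellativity, needed for the two-decomposition branchings $H$ and $I$ --- and your reading of the side condition on $\alpha_{u,v}$ as ``$uv$ is again a simple'' is the right one for the argument to go through.
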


%%%%%%%%%%%%%%%%%%%%%%%%%%%%%%%%%%%%%%%%%%%%
%%%%%%%%%%%%%%%%%%%%%%%%%%%%%%%%%%%%%%%%%%%%
\section{Artin's coherent presentation of Artin monoids}
\label{Section:ArtinCoherentPresentation}
%%%%%%%%%%%%%%%%%%%%%%%%%%%%%%%%%%%%%%%%%%%%
%%%%%%%%%%%%%%%%%%%%%%%%%%%%%%%%%%%%%%%%%%%%

Let $\W$ be a Coxeter group with a totally ordered set $S$ of generators. In this section, we use the homotopical reduction method on Garside's coherent presentation $\Gar_3(\W)$ to contract it into a smaller coherent presentation associated to Artin's presentation.

%%%%%%%%%%%%%%%%%%%%%%%%%%
\subsection{Artin's coherent presentation}
%%%%%%%%%%%%%%%%%%%%%%%%%%

We call \emph{Artin's presentation} of the Artin monoid $\B^+(\W)$ the $2$-polygraph $\Art_2(\W)$ with one $0$-cell, the elements of $S$ as $1$-cell and one $2$-cell
\[
\gamma_{s,t} \::\: \angle{ts}^{m_{st}} \:\dfl\: \angle{st}^{m_{st}}
\]
for every $t>s$ in $S$ such that $m_{st}$ is finite.

We recall that, if $I$ is a subset of $S$, then $I$ has an lcm if, and only if, the subgroup $\W_I$ of $\W$ spanned by~$I$ is finite. In that case, the lcm of $I$ is the longest element $w_0(I)$ of $\W_I$. This implies that, if an element $u$ of $\W$ admits reduced expressions $s_1u_1$, \dots, $s_nu_n$ where $s_1$, \dots, $s_n$ are in~$S$, then the subgroup $W_{\ens{s_1,\dots,s_n}}$ is finite and its longest element $w_0(s_1,\dots,s_n)$ is a divisor of $u$. As a consequence, the element $u$ has a unique reduced expression of the shape $w_0(s_1,\dots,s_n)u'$. 

The main theorem of this section extends $\Art_2(\W)$ into \emph{Artin's coherent presentation} of the Artin monoid $\B^+(\W)$.

%%%%%%%%%%%%%%%%
\begin{theorem}
\label{Theorem:ArtinCoherentPresentation}
For every Coxeter group $\W$, the Artin monoid $\B^+(\W)$ admits the coherent presentation $\Art_3(\W)$ made of Artin's presentation $\Art_2(\W)$ and one $3$-cell $Z_{r,s,t}$ for all elements $t>s>r$ of $S$ such that the subgroup $\W_{\ens{r,s,t}}$ is finite.
\end{theorem}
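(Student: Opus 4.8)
The plan is to derive the theorem from Garside's coherent presentation. By Theorem~\ref{Theorem:GarsideCoherentPresentation}, $\Gar_3(\W)$ is a coherent presentation of $\B^+(\W)$, so by Theorem~\ref{Theorem:TietzeTransformations} it suffices to exhibit a homotopical reduction turning $\Gar_3(\W)$ into $\Art_3(\W)$. Concretely I would continue the reduction of the convergent presentation $\Sr(\Gar_2(\W))$ begun in Section~\ref{Subsection:GarsideCompletionReduction} --- equivalently, reduce $\Gar_3(\W)$ further --- so as to keep the generating triple confluences of Proposition~\ref{Proposition:GarsideCoherentConvergentPresentation} available as collapsing data for $3$-cells, the reduction already carried out in~\ref{Subsection:GarsideCompletionReduction} being recovered as a first instalment. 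The reduction is organised by the rank of the parabolic subgroup attached to a cell: collapse every generator $u$ with $l(u)\geq 2$, then every redundant $2$-cell, then every redundant $3$-cell, so that the surviving $k$-cells are exactly indexed by the subsets $I\subseteq S$ of size $k\leq 3$ with $\W_I$ finite.

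For the $1$-cells: for each $u\in\W$ with $l(u)\geq 2$ fix a reduced expression and write $u\doteq u's$ with $s\in S$ its last letter; the $2$-cell $\alpha_{u',s}\colon u'|s\dfl u$ then belongs to the collapsible part, its redundant target being the $1$-cell $u$ and its source mentioning only the strictly shorter generators $u'$ and $s$, so collapsibility holds for the length order on $1$-cells. Running this recursively sends every generator to its distinguished reduced word over $S$, so the presented category becomes Artin's; the remaining $2$-cells $\alpha_{u,v}$, $\beta_{u,v,w}$ and all $3$-cells are rewritten into cells between words of $S^*$. For the $2$-cells I would collapse the redundant ones using the $3$-cells of type $A$--$E$ of Figure~\ref{Figure:SquierGarside3Cells}: after a Nielsen reorientation each such $3$-cell has a single $2$-cell as target and thus makes it collapsible, for a well-founded order refining the termination order of Section~\ref{Subsection:GarsideCompletionReduction} (total length first, then component lengths from the right). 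The arithmetic input --- that the lcm of a finite $I$ is $w_0(I)$ and that an element with reduced expressions beginning with distinct $s,t$ is left-divisible by $w_0(\ens{s,t})$, recalled just before the statement --- is what forces the $2$-cells resisting this collapse to be exactly one braid relation $\gamma_{s,t}\colon\angle{ts}^{m_{st}}\dfl\angle{st}^{m_{st}}$ per pair $\ens{s,t}$ with $m_{st}$ finite.

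For the $3$-cells I would continue the list $\omega^C,\dots,\omega^I$ of generating triple confluences of Section~\ref{Subsection:GarsideCompletionReduction} with the $3$-spheres arising from the remaining triple critical branchings of $\Sr(\Gar_2(\W))$ --- they exist and are coherent by Proposition~\ref{Proposition:GarsideCoherentConvergentPresentation} and the discussion in~\ref{Subsubsection:TripleConfluences} --- and use them, together with the $3$-cells $A_{u,v,w}$ not chosen to represent a surviving cell, as the $\Gamma_4$- and $\Gamma_3$-parts of the collapsible part. One then checks that the surviving $3$-cells are one per triple $t>s>r$ with $\W_{\ens{r,s,t}}$ finite; the survivor attached to $\ens{r,s,t}$ is $Z_{r,s,t}$, and tracing back through the substitutions of the previous two steps identifies its boundary with the loop built from the relations $\gamma$ around the reduced-expression graph of the longest element $w_0(\ens{r,s,t})$, i.e.\ the Tits--Zamolodchikov cell of Subsection~\ref{Subsection:Zamolodchikov}; Tits' first-type loops and the superfluous second-type loops are precisely what is killed here. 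Since the reduced $(3,1)$-polygraph is $\Art_3(\W)$ and homotopical reduction is a Tietze transformation, Theorem~\ref{Theorem:TietzeTransformations} then gives that $\Art_3(\W)$ is a coherent presentation of $\B^+(\W)$.

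The main obstacle is the bookkeeping in the last two steps: verifying that the declared families of $2$-cells, $3$-cells and $3$-spheres genuinely form a collapsible part --- the existence of the three well-founded orders, that no cell is the redundant target of two chosen collapses, and hence that the collapse of $3$-cells is acyclic --- and, above all, proving that what survives is in natural bijection with the finite parabolic subgroups of rank $2$ and $3$, with the asserted boundaries. This is the combinatorial heart of the theorem; it uses in an essential way Tits' analysis of the fundamental group of the reduced-expression graph~\cite{Tits81} and the lcm/longest-element arithmetic of Artin monoids, whereas the rest is a long but routine diagram chase through the $3$-cells of Figure~\ref{Figure:SquierGarside3Cells} and the triple confluences.
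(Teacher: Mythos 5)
Your overall architecture matches the paper's: homotopically reduce Garside's coherent presentation $\Gar_3(\W)$, using the $3$-spheres coming from the remaining triple critical branchings (those generated by three $\alpha$'s) to collapse redundant $3$-cells, the $3$-cells $A$ to collapse redundant $2$-cells, and $2$-cells $\alpha$ to collapse the generators of length $\geq 2$, so that exactly one $k$-cell survives per finite parabolic of rank $k$. But the step you yourself flag as ``the combinatorial heart'' is precisely the content of the paper's proof, and your proposal does not supply the device that makes it work. The paper classifies every indexing family $(u_1,\dots,u_n)$ with $l(u_1\cdots u_n)=l(u_1)+\cdots+l(u_n)$ as \emph{essential}, \emph{collapsible} or \emph{redundant} by comparing each partial product $u_1\cdots u_k$ with the longest element $w_0(s_1,\dots,s_k)$, where $s_k=d_{u_1\cdots u_k}$ is the \emph{smallest} generator dividing $u_1\cdots u_k$. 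This single uniform classification, applied simultaneously to $1$-, $2$-, $3$-cells and $3$-spheres, is what (a) yields a genuine partition, so no cell is the redundant target of two chosen collapses and no collapsible cell is itself redundant for a higher one; (b) admits the explicit well-founded order $\Phi(u_1,\dots,u_n)=(l(u_1\cdots u_n),d_{u_1},l(u_1),\dots)$; and (c) identifies the essential cells with the subsets $I\subseteq S$ of rank $\leq 3$ with $\W_I$ finite, via the lcm $=w_0(I)$ arithmetic alone.

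Two of your concrete choices would have to be changed to recover this. First, collapsing a generator $u$ via the \emph{last} letter of an arbitrarily fixed reduced expression breaks the uniformity: the paper collapses $u$ via $\alpha_{s,u'}$ with $s=d_u$ the canonical smallest left divisor, and it is this canonical left decomposition that forces the surviving $2$-cells to be exactly the $\alpha_{t,u}$ with $tu=w_0(s,t)$, $t>s$, i.e.\ one braid relation per finite rank-$2$ parabolic; with an arbitrary reduced expression the characterisation of survivors is not even well defined. Second, in reducing $\Gar_3(\W)$ the $3$-cells of types $B$--$E$ are no longer available (the $\beta$'s and $B,\dots,I$ were already eliminated in the passage from $\Sr(\Gar_2(\W))$ to $\Gar_3(\W)$); only the $A$-cells can serve to collapse redundant $\alpha$'s, which is why the case analysis (a)/(b) on which of $\alpha_{su,v}$, $\alpha_{s,uv}$ is the redundant target matters. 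Finally, the appeal to Tits' theorem on the fundamental group of the reduced-expression graph is not needed and is not how the paper argues: that result appears only as motivation in the introduction, and the identification of the surviving $3$-cells with the Tits--Zamolodchikov cells falls out of the classification above, so importing it here would replace the proof's main content with an external black box rather than complete the argument.
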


We note that Artin's coherent presentation has exactly one $k$-cell, $0\leq k\leq 3$, for every subset~$I$ of~$S$ of rank $k$ such that the subgroup $\W_I$ is finite. In~\ref{Subsection:ReductionGarside}, we use homotopical reduction on Garside's coherent presentation $\Gar_3(\W)$ to get a homotopy basis of Artin's presentation. The precise shape of the $3$-cells is given in~\ref{Subsection:Zamolodchikov}. 

%%%%%%%%%%%%%%%%%%%%%%%%%%
\subsection{Homotopical reduction of Garside's coherent presentation}
\label{Subsection:ReductionGarside}
%%%%%%%%%%%%%%%%%%%%%%%%%%

We consider Garside's coherent presentation $\Gar_3(\W)$ of $\B^+(\W)$. The homotopical reduction in the proof of Theorem~\ref{Theorem:GarsideCoherentPresentation} has coherently eliminated some redundant $3$-cells, thanks to generating triple confluences of $\Sr(\Gar_2(\W))$. This convergent $(3,1)$-polygraph has other triple critical branchings. In particular, the critical triple branchings created by three $2$-cells $\alpha$, whose sources are the $u|v|w|x$ with \typequatre{1}{1}{1}, generate the following family $\Gar_4(\W)$ of $4$-spheres $\omega_{u,v,w,x}$ of $\tck{\Gar_3(\W)}$:
\renewcommand{\objectstyle}{\scriptstyle}
\renewcommand{\labelstyle}{\scriptstyle}
\[
\vcenter{\xymatrix @!C @C=1em{
& uv | w | x
	\ar@2@/^/ [rr] ^-{\alpha_{uv,w} | x}
	\ar@{} [dr] |-{\displaystyle A_{u,v,w} | x}
&& uvw | x
	\ar@2@/^/ [dr] ^-{\alpha_{uvw,x}}
	\ar@{} [dd] |-{\displaystyle A_{u,vw,x}}
\\
u | v | w | x
	\ar@2@/^/ [ur] ^-{\alpha_{u,v} | w | x}
	\ar@2 [rr] |-{u | \alpha_{v,w} | x}
	\ar@2@/_/ [dr] _-{u | v | \alpha_{w,x}}
&& u | vw | x
	\ar@2 [ur] |-{\alpha_{u,vw} | x}
	\ar@2 [dr] |-{u | \alpha_{vw,x}}
	\ar@{} [dl] |-{\displaystyle u | A_{v,w,x}}
&& uvwx
\\
& u | v | wx
	\ar@2@/_/ [rr] _-{u | \alpha_{v,wx}} 
&& u | vwx
	\ar@2@/_/ [ur] _-{\alpha_{u,vwx}}
}}
\:\qfl\:
\vcenter{\xymatrix @!C @C=1em{
& uv | w | x
	\ar@2@/^/ [rr] ^-{\alpha_{uv,w} | x}
	\ar@2 [dr] |-{uv | \alpha_{w,x}}
	\ar@{} [dd] |-{\displaystyle =}
&& uvw | x
	\ar@2@/^/ [dr] ^-{\alpha_{uvw,x}}
	\ar@{} [dl] |-{\displaystyle A_{uv,w,x}}
\\
u | v | w | x
	\ar@2@/^/ [ur] ^-{\alpha_{u,v} | w | x}
	\ar@2@/_/ [dr] _-{u | v | \alpha_{w,x}}
&& uv | wx
	\ar@2 [rr] |-{\alpha_{uv,wx}}
	\ar@{} [dr] |-{\displaystyle A_{u,v,wx}}
&& uvwx
\\
& u | v | wx
	\ar@2 [ur] |-{\alpha_{u,v} | wx}
	\ar@2@/_/ [rr] _-{u | \alpha_{v,wx}} 
&& u | vwx
	\ar@2@/_/ [ur] _-{\alpha_{u,vwx}}
}}
\]
\renewcommand{\objectstyle}{\displaystyle}%
\renewcommand{\labelstyle}{\displaystyle}%
To construct a collapsible part of $\Gar_3(\W)$, we use the indexing families of the cells of $\Gar_3(\W)$ and the $3$-spheres of $\Gar_4(\W)$ to classify and compare them.

%%%%%%%%
\subsubsection{The classification}

If $u$ is an element of $\W\setminus\ens{1}$, the \emph{smallest divisor of $u$} is denoted by $d_u$ and defined as the smallest element of $S$ that is a divisor of $u$. Let $(u_1,\dots,u_n)$ be a family of elements of $\W\setminus\ens{1}$ such that
\[
l(u_1\cdots u_n) \:=\: l(u_1)+\cdots+l(u_n).
\]
For every $k\in\ens{1,\dots,n}$, we write $s_k=d_{u_1\cdots u_k}$. We note that $s_1\geq s_2\geq \cdots\geq s_n$ since each $s_k$ divides $u_1\cdots u_l$ for $l\geq k$. Moreover, the elements $s_1$,\dots, $s_k$ have $u_1\cdots u_k$ as common multiple, so that their lcm $w_0(s_1,\dots,s_k)$ exists and divides $u_1\cdots u_k$, and each subgroup $\W_{s_1,\dots,s_k}$ is finite. Thus, we have the following diagram, where each arrow $u\fl v$ means that $u$ is a divisor of $v$:
\[
\xymatrix @C=3em {
w_0(s_1) \ar [r] \ar [d]
&	w_0(s_1,s_2) \ar [r] \ar [d]
& (\cdots)	\ar [r]
& w_0(s_1,\dots,s_{n-1}) \ar [r] \ar [d]
& w_0(s_1,\dots,s_n) \ar [d]
\\
u_1	\ar [r]
& u_1u_2 \ar [r]
& (\cdots) \ar [r]
& u_1\cdots u_{n-1} \ar [r]
& u_1\cdots u_n
}
\]
If every vertical arrow is an equality, we say that $(u_1,\dots,u_n)$ is \emph{essential}. Since each~$u_k$ is different from~$1$, this implies that no horizontal arrow is an equality, so that $s_1>\cdots>s_n$ holds. Moreover, we have $u_1=s_1$ and, by uniqueness of the complement, we get that each $u_{k+1}$ is the complement of $w_0(s_1,\dots,s_k)$ in $w_0(s_1,\dots,s_{k+1})$. Thus, the family $(u_1,\dots,u_n)$ is uniquely determined by the elements $s_1$, \dots, $s_n$ of $S$ such that $s_1>\cdots>s_n$. 

Otherwise, there exists a minimal $k$ in $\ens{1,\dots,n}$ such that $(u_1,\dots,u_k)$ is not essential, \ie, such that $u_1\cdots u_k \neq w_0(s_1,\dots,s_k)$. If $k\geq 2$, there are two possibilities, depending on whether $w_0(s_1,\dots,s_{k-1})$ and $w_0(s_1,\dots,s_k)$ are equal or not, which is equivalent to the equality $s_{k-1}=s_k$ since $s_1>\dots>s_{k-1}\geq s_k$. If $s_{k-1}=s_k$, we say that $(u_1,\dots,u_n)$ is \emph{collapsible}. If $s_{k-1}>s_k$, then we have $u_k\doteq vw$ (\ie, $u_k=vw$ and \typedeuxbase{$v$}{$w$}{1}), with $v$ and~$w$ in $\W\setminus\ens{1}$ such that $(u_1,\dots,u_{k-1},v)$ is essential: we say that $(u_1,\dots,u_n)$ is \emph{redundant}. 

Finally, if $k=1$ and $(u_1)$ is not essential, we have $u_1=s_1 w$ with $w$ in $\W\setminus\ens{1}$ and we say that $(u_1)$ is redundant.

By construction, the family $(u_1,\dots,u_n)$ is either essential, collapsible or redundant. This induces a partition of the cells of $\Gar_3(\W)$ and the spheres of $\Gar_4(\W)$ in three parts.

%%%%%%%%
\subsubsection{The well-founded order}

Finally, we define a mapping 
\[
\Phi(u_1,\dots,u_n) \:=\: \big( l(u_1\cdots u_n),\; d_{u_1},\; l(u_1),\; d_{u_1u_2},\; l(u_1u_2),\; \dots\; ,\; d_{u_1\cdots u_{n-1}},\; l(u_1\cdots u_{n-1}) \big)
\]
of every family $(u_1,\dots,u_n)$ of elements of $\W\setminus\ens{1}$ such that $l(u_1\cdots u_n)=l(u_1)+\cdots+l(u_n)$ into $\Nb\times (S\times\Nb)^{n-1}$. We equip the target set with the well-founded lexicographic order generated by the natural order on $\Nb$ and the fixed order on $S$. We compare families $(u_1,\dots,u_n)$ of elements of $\W\setminus\ens{1}$ such that $l(u_1\cdots u_n) = l(u_1)+\cdots+l(u_n)$ by ordering to their images through $\Phi$. 

The cells of $\Gar_3(\W)$ are then compared according to their indices.

%%%%%%%%
\subsubsection{The collapsible part of \pdf{\Gar_3(\W)}}

We define $\Gamma$ as the collection of all the $2$-cells and $3$-cells of $\Gar_3(\W)$ and all the $3$-spheres of $\Gar_4(\W)$ whose indexing family is collapsible. Let us check that $\Gamma$ is a collapsible part of $\Gar_3(\W)$.

The $2$-cells of $\Gamma$ are the $\alpha_{s,u} : s|u \dfl su$ with $s=d_{su}$. Each one is collapsible, the corresponding redundant $1$-cell is $su$ and we have $su>s$ and $su>u$ because $l(su)>l(s)$ and $l(su)>u$.

The $3$-cells of $\Gamma$ are the
\[
\vcenter{\xymatrix @!C @R=1.5em {
& su|v
	\ar@2 @/^/ [dr] ^-{\alpha_{su,v}}
	\ar@3 []!<0pt,-25pt>;[dd]!<0pt,25pt> ^-{A_{s,u,v}}
\\
s|u|v
	\ar@2 @/^/ [ur] ^-{\alpha_{s,u}|v}
	\ar@2 @/_/ [dr] _-{s|\alpha_{u,v}}
&& suv
\\
& s|uv
	\ar@2 @/_/ [ur] _-{\alpha_{s,uv}}
}}
\]
with either (a) $s=d_{su}$ or (b) $s>d_{su}=d_{suv}$ and $su=w_0(s,d_{su})$. Those $3$-cells are collapsible up to a Nielsen transformation, and the corresponding redundant $2$-cells are: (a) $\alpha_{su,v}$; or (b) $\alpha_{s,uv}$. By hypothesis, the indexing pairs $(su,v)$ and $(s,uv)$ are redundant, so that none of those $2$-cells is in $\Gamma$. We check that each redundant $2$-cell is strictly greater than the other $2$-cells appearing in the source and target of $A_{s,u,v}$. For both cases (a) and (b), we observe that $\alpha_{su,v}$ and $\alpha_{s,uv}$ are always strictly greater than $\alpha_{s,u}$ and $\alpha_{u,v}$ since $l(suv)>l(su)$ and $l(suv)>l(uv)$. Then, we proceed by case analysis:
\begin{enumerate}[\quad(a)]
\item $\alpha_{su,v}>\alpha_{s,uv}$ since $s=d_{su}$ and $l(su)>l(s)$ 
\item $\alpha_{s,uv}>\alpha_{su,v}$ since $s>d_{su}$.
\end{enumerate}
Finally, the $3$-spheres of $\Gamma$ are the $\omega_{s,u,v,w}$
\renewcommand{\objectstyle}{\scriptstyle}
\renewcommand{\labelstyle}{\scriptstyle}
\[
\xymatrix @!C @C=0.75em{
& su | v | w
	\ar@2@/^/ [rr] ^-{\alpha_{su,v} | w}
	\ar@{} [dr] |-{\displaystyle A_{s,u,v} | w}
&& suv | w
	\ar@2@/^/ [dr] ^-{\alpha_{suv,w}}
	\ar@{} [dd] |-{\displaystyle A_{s,uv,w}}
\\
s | u | v | w
	\ar@2@/^/ [ur] ^-{\alpha_{s,u} | v | w}
	\ar@2 [rr] |-{s | \alpha_{u,v} | w}
	\ar@2@/_/ [dr] _-{s | u | \alpha_{v,w}}
&& s | uv | w
	\ar@2 [ur] |-{\alpha_{s,uv} | w}
	\ar@2 [dr] |-{s | \alpha_{uv,w}}
	\ar@{} [dl] |-{\displaystyle s | A_{u,v,w}}
&& suvw
\\
& s | u | vw
	\ar@2@/_/ [rr] _-{s | \alpha_{u,vw}} 
&& s | uvw
	\ar@2@/_/ [ur] _-{\alpha_{s,uvw}}
}
\quad
\raisebox{-36pt}{\qfl}
\quad
\xymatrix @!C @C=0.75em{
& su | v | w
	\ar@2@/^/ [rr] ^-{\alpha_{su,v} | w}
	\ar@2 [dr] |-{su | \alpha_{v,w}}
	\ar@{} [dd] |-{\displaystyle =}
&& suv | w
	\ar@2@/^/ [dr] ^-{\alpha_{suv,w}}
	\ar@{} [dl] |-{\displaystyle A_{su,v,w}}
\\
s | u | v | w
	\ar@2@/^/ [ur] ^-{\alpha_{s,u} | v | w}
	\ar@2@/_/ [dr] _-{s | u | \alpha_{v,w}}
&& su | vw
	\ar@2 [rr] |-{\alpha_{su,vw}}
	\ar@{} [dr] |-{\displaystyle A_{s,u,vw}}
&& suvw
\\
& s | u | vw
	\ar@2 [ur] |-{\alpha_{s,u} | vw}
	\ar@2@/_/ [rr] _-{s | \alpha_{u,vw}} 
&& s | uvw
	\ar@2@/_/ [ur] _-{\alpha_{s,uvw}}
}
\]
\renewcommand{\objectstyle}{\displaystyle}%
\renewcommand{\labelstyle}{\displaystyle}%
with one of the following: 
\[
\begin{cases}
\text{(a) $s=d_{su}$,} \\
\text{(b) $s>d_{su}=d_{suv}$ and $su=w_0(s,d_{su})$,} \\
\text{(c) $s>d_{su}>d_{suv}=d_{suvw}$ and $suv=w_0(s,d_{su},d_{suv})$.}
\end{cases}
\]
Those $3$-cells are collapsible up to a Nielsen transformation, and the corresponding redundant $3$-cells are (a) $A_{su,v,w}$, (b) $A_{s,uv,w}$ or (c) $A_{s,u,vw}$. By hypothesis, the indexing triples $(su,v,w)$, $(s,uv,w)$ and $(s,u,vw)$ are redundant, so that none of those $3$-cells is in $\Gamma$. We observe that $A_{su,v,w}$, $A_{s,uv,w}$ and $A_{s,u,vw}$ are always strictly greater than $A_{s,u,v}$ and $A_{u,v,w}$ since $l(suvw)>l(suv)$ and $l(suvw)>l(uvw)$. Then, we proceed by case analysis:
\begin{enumerate}[\quad(a)]
\item $A_{su,v,w}>A_{s,uv,w}$ and $A_{su,v,w}>A_{s,u,vw}$ since $s=d_{su}$ and $l(su)>l(s)$.
\item $A_{s,uv,w}>A_{su,v,w}$ since $s>d_{su}$ and $A_{s,uv,w}>A_{s,u,vw}$ since $d_{suv}=d_{su}$ and $l(suv)>l(su)$.
\item $A_{s,u,vw}>A_{su,v,w}$ since $s>d_{su}$ and $A_{s,u,vw}>A_{s,uv,w}$ since $d_{su}>d_{suv}$.
\end{enumerate}

%%%%
\subsubsection{The homotopical reduction}

The homotopical reduction of $\Gar_3(\W)$ with respect to $\Gamma$ is the Tietze transformation $\pi=\pi_{\Gamma}$ that coherently eliminates all the collapsible cells of $\Gamma$ with their corresponding redundant cell. According to the partition of the cells of $\Gar_3(\W)$, this only leaves the essential cells, \emph{i.e.}, those whose indexing family is essential, with source and target replaced by their image through $\pi$.

In particular, the essential $1$-cells are the elements of $S$. By definition of $\Gamma$, the $3$-functor $\pi$ maps a $1$-cell $u$ of $\Gar_3(\W)$ to the element $s\pi(v)$ of $S^*$ if $u\doteq sv$ and $s=d_u$. This gives, by induction,
\[
\pi(u) \:=\: s_1\cdots s_n
\]
for $s_1$, \dots, $s_n$ in $S$ such that $u\doteq s_1\cdots s_n$ and $s_i=d_{s_i\cdots s_n}$. This is sufficient to conclude that the underlying $2$-polygraph of $\Gar_3(\W)/\Gamma$ is (isomorphic to) Artin's presentation of $\B^+(\W)$. 

The essential $2$-cells are the $\alpha_{s,u}$ such that $s>d_{su}$ and $su = w_0(s,d_{su})$. Hence, there is one such $2$-cell for every $t>s$ in $S$ such that $\W_{\ens{s,t}}$ is finite, \ie, such that $m_{st}$ is finite, and its image through $\pi$ has shape
\[
\angle{ts}^{m_{st}} \:\dfl\: \angle{st}^{m_{st}}.
\]

Finally, the essential $3$-cells are the $A_{s,u,v}$ such that $s>d_{su}>d_{suv}$, $su= w_0(s,d_{su})$ and $suv=w_0(s,d_{su},d_{suv})$. Hence, there is one such $3$-cell for every $t>s>r$ in $S$ such that $\W_{\ens{r,s,t}}$ is finite. If we denote by $Z_{r,s,t}$ the image of the corresponding $3$-cell $A_{s,u,v}$ through $\pi$, this concludes the proof of Theorem~\ref{Theorem:ArtinCoherentPresentation}.

%%%%%%%%%%%%%%%%%%%%%%%%%%%%%%%%%%%%%%%%
\subsection{The \pdf{3}-cells of Artin's coherent presentation}
\label{Subsection:Zamolodchikov}
%%%%%%%%%%%%%%%%%%%%%%%%%%%%%%%%%%%%%%%%

Let us compute the sources and targets of the $3$-cells $Z_{r,s,t}$ of Artin's coherent presentation. The $3$-cell $Z_{r,s,t}$ is the image through the Tietze transformation $\pi$ of the corresponding essential $3$-cell $A_{t,u,v}$, with $u$ the complement of $t$ in $w_0(s,t)$ and $v$ the complement of $w_0(s,t)$ in $w_0(r,s,t)$. Since the $3$-cell $A_{t,u,v}$ is entirely determined by its source, the shape of the $3$-cell $Z_{r,s,t}$ is determined by the Coxeter type of the parabolic subgroup $\W_{\ens{r,s,t}}$. According to the classification of finite Coxeter groups~\cite[Chap.~VI, \textsection~4, Theorem~1]{BourbakiLie4-6}, there are five cases, shown below:
\[
\begin{tikzpicture}
\foreach \i in {1, 2, 3} {
	\coordinate (\i) at (\i, 0) ;
}
\draw [fill = black] (1) circle (0.2em) node [above] {$r$} ;
\draw [fill = black] (2) circle (0.2em) node [above] {$s$} ;
\draw [fill = black] (3) circle (0.2em) node [above] {$t$} ;
\draw [thick] (1) -- (2) ;
\draw [thick] (2) -- (3) ;
\node at (2, -0.5) {$A_3$} ;
\end{tikzpicture}
\qquad\qquad
\begin{tikzpicture}
\foreach \i in {1, 2, 3} {
	\coordinate (\i) at (\i, 0) ;
}
\draw [fill = black] (1) circle (0.2em) node [above] {$r$} ;
\draw [fill = black] (2) circle (0.2em) node [above] {$s$} ;
\draw [fill = black] (3) circle (0.2em) node [above] {$t$} ;
\draw [thick] (1) -- node [above] {$\sm 4$} (2) ;
\draw [thick] (2) -- (3) ;
\node at (2, -0.5) {$B_3$} ;
\end{tikzpicture}
\qquad\qquad
\begin{tikzpicture}
\foreach \i in {1, 2, 3} {
	\coordinate (\i) at (\i, 0) ;
}
\draw [fill = black] (1) circle (0.2em) node [above] {$r$} ;
\draw [fill = black] (2) circle (0.2em) node [above] {$s$} ;
\draw [fill = black] (3) circle (0.2em) node [above] {$t$} ;
\draw [thick] (1) -- node [above] {$\sm 5$} (2) ;
\draw [thick] (2) -- (3) ;
\node at (2, -0.5) {$H_3$} ;
\end{tikzpicture}
\]
\[
\begin{tikzpicture}
\foreach \i in {1, 2, 3} {
	\coordinate (\i) at (\i, 0) ;
}
\draw [fill = black] (1) circle (0.2em) node [above] {$r$} ;
\draw [fill = black] (2) circle (0.2em) node [above] {$s$} ;
\draw [fill = black] (3) circle (0.2em) node [above] {$t$} ;
\node at (2, -0.5) {$A_1\times A_1\times A_1$} ;
\end{tikzpicture}
\qquad\qquad
\begin{tikzpicture}
\foreach \i in {1, 2, 3} {
	\coordinate (\i) at (\i, 0) ;
}
\draw [fill = black] (1) circle (0.2em) node [above] {$r$} ;
\draw [fill = black] (2) circle (0.2em) node [above] {$s$} ;
\draw [fill = black] (3) circle (0.2em) node [above] {$t$} ;
\draw [thick] (1) -- node [above] {$\sm p$} (2) ;
\node at (2, -0.5) {$I_2(p)\times A_1 \:\: \sm{3\leq p<\infty}$} ;
\end{tikzpicture}
\]
Note that we use the numbering conventions of ~\cite[Theorem~1.1]{GeckPfeiffer00}. The resulting $3$-cells are given in Figures~\ref{Figure:ZABAAA} and~\ref{Figure:ZHI}. The rest of this section explains their computation, mainly based on the images of the $2$-cells of $\Gar_3(\W)$ through $\pi$. We detail the cases of the Coxeter types $A_1\times A_1\times A_1$ and~$A_3$. A Python script, based on the PyCox library~\cite{Geck12}, can be used to compute Garside's and Artin's coherent presentations for spherical Artin monoids\footnote{\url{http://www.pps.univ-paris-diderot.fr/~guiraud/cox/cox.zip}}. The $3$-cells $Z_{r,s,t}$ are also given, in ``string diagrams'', in~\cite[Definition~4.3]{EliasWilliamson13}.
\begin{figure}[!p] 
\[
\xymatrix @!C @C=3em{
& strsrt
	\ar@2 [r] ^{s\gamma_{rt}s\gamma_{rt}^-}
& srtstr
	\ar@2 [r] ^{sr\gamma_{st}r}
	\ar@3 []!<0pt,-60pt>;[dddd]!<0pt,60pt> ^{\:Z_{r,s,t}}
& srstsr
	\ar@2@/^2ex/ [dr] ^-{\gamma_{rs}tsr}
\\
stsrst
	\ar@2@/^2ex/ [ur] ^-{st\gamma_{rs}t}
&&&& rsrtsr
\\
tstrst
	\ar@2 [u] ^{\gamma_{st}rst}
	\ar@2 [d] _{ts\gamma_{rt}st}
&&&& rstrsr
	\ar@2 [u] _{rs\gamma_{rt}sr}
\\
tsrtst
	\ar@2@/_2ex/ [dr] _-{tsr\gamma_{st}}
&&&& rstsrs
	\ar@2 [u] _{rst\gamma_{rs}}
\\
& tsrsts
	\ar@2 [r] _{t\gamma_{rs}ts}
& trsrts
	\ar@2 [r] _{\gamma_{rt}s\gamma_{rt}^-s}
& rtstrs
	\ar@2@/_2ex/ [ur] _-{\:r\gamma_{st}rs}
}
\]
%%%

%%%
\renewcommand{\objectstyle}{\scriptstyle} 
\renewcommand{\labelstyle}{\scriptstyle} 
\[
\xymatrix @!C @R=1.5em @C=2em{
& srtsrtstr
	\ar@2 [r] ^{srts\gamma_{rt}^-str}
& srtstrstr
	\ar@2 [r] ^{sr\gamma_{st}rs\gamma_{rt}}
& srstsrsrt
	\ar@2 [r] ^{srst\gamma_{rs}t}
	\ar@3 []!<0pt,-65pt>;[dddddd]!<0pt,65pt> ^{\displaystyle \: Z_{r,s,t}}
& srstrsrst
	\ar@2 [r] ^{srs\gamma_{rt}srst}
& srsrtsrst	
	\ar@2@/^2ex/ [dr] ^-{\gamma_{rs}tsrst}
\\
strsrstsr
	\ar@2@/^2ex/ [ur] ^-{s\gamma_{rt}sr\gamma_{st}^-r}
&&&&&& rsrstsrst
\\
stsrsrtsr
	\ar@2 [u] ^{st\gamma_{rs}tsr}
&&&&&& rsrtstrst
	\ar@2 [u] _{rsr\gamma_{st}rst}
\\
tstrsrtsr
	\ar@2 [u] ^{\gamma_{st}rsrtsr}
	\ar@2 [d] _{ts\gamma_{rt}s\gamma_{rt}^-sr}
&&&&&& rsrtsrtst
	\ar@2 [u] _{rsrts\gamma_{rt}^-st}
\\
tsrtstrsr
	\ar@2 [d] _{tsr\gamma_{st}rsr}
&&&&&& rstrsrsts
	\ar@2 [u] _{rs\gamma_{rt}sr\gamma_{st}^-}
\\
tsrstsrsr
	\ar@2@/_2ex/ [dr] _-{tsrst\gamma_{rs}}
&&&&&& rstsrsrts
	\ar@2 [u] _{rst\gamma_{rs}ts}
\\
& tsrstrsrs
	\ar@2 [r] _{tsrs\gamma_{rt}srs}
& tsrsrtsrs
	\ar@2 [r] _{t\gamma_{rs}tsrs}
& trsrstsrs
	\ar@2 [r] _{\gamma_{rt}sr\gamma_{st}^-rs}
& rtsrtstrs
	\ar@2 [r] _{rts\gamma_{rt}^-strs}
& rtstrstrs
	\ar@2@/_2ex/ [ur] _-{\quad r\gamma_{st}rs\gamma_{rt}s}
}
\]
\renewcommand{\objectstyle}{\displaystyle}%
\renewcommand{\labelstyle}{\displaystyle}%
%%%

%%%
\[
\xymatrix @!C @C=3em{
& str
	\ar@2 [r] ^{s\gamma_{rt}} _{}="src"
& srt
	\ar@2@/^2ex/ [dr] ^{\gamma_{rs}t}
\\
tsr
	\ar@2@/^2ex/ [ur] ^{\gamma_{st}r}
	\ar@2@/_2ex/ [dr] _{t\gamma_{rs}}
&&& rst
\\
& trs
	\ar@2 [r] _{\gamma_{rt}s} ^{}="tgt"
& rts
	\ar@2@/_2ex/ [ur] _{r\gamma_{st}}
\ar@3 "src"!<0pt,-25pt>;"tgt"!<0pt,25pt> ^{\:Z_{r,s,t}}
}	
\]
\caption{The $3$-cells $Z_{r,s,t}$ for Coxeter types $A_3$, $B_3$ and $A_1\times A_1\times A_1$}
\label{Figure:ZABAAA}
\end{figure}
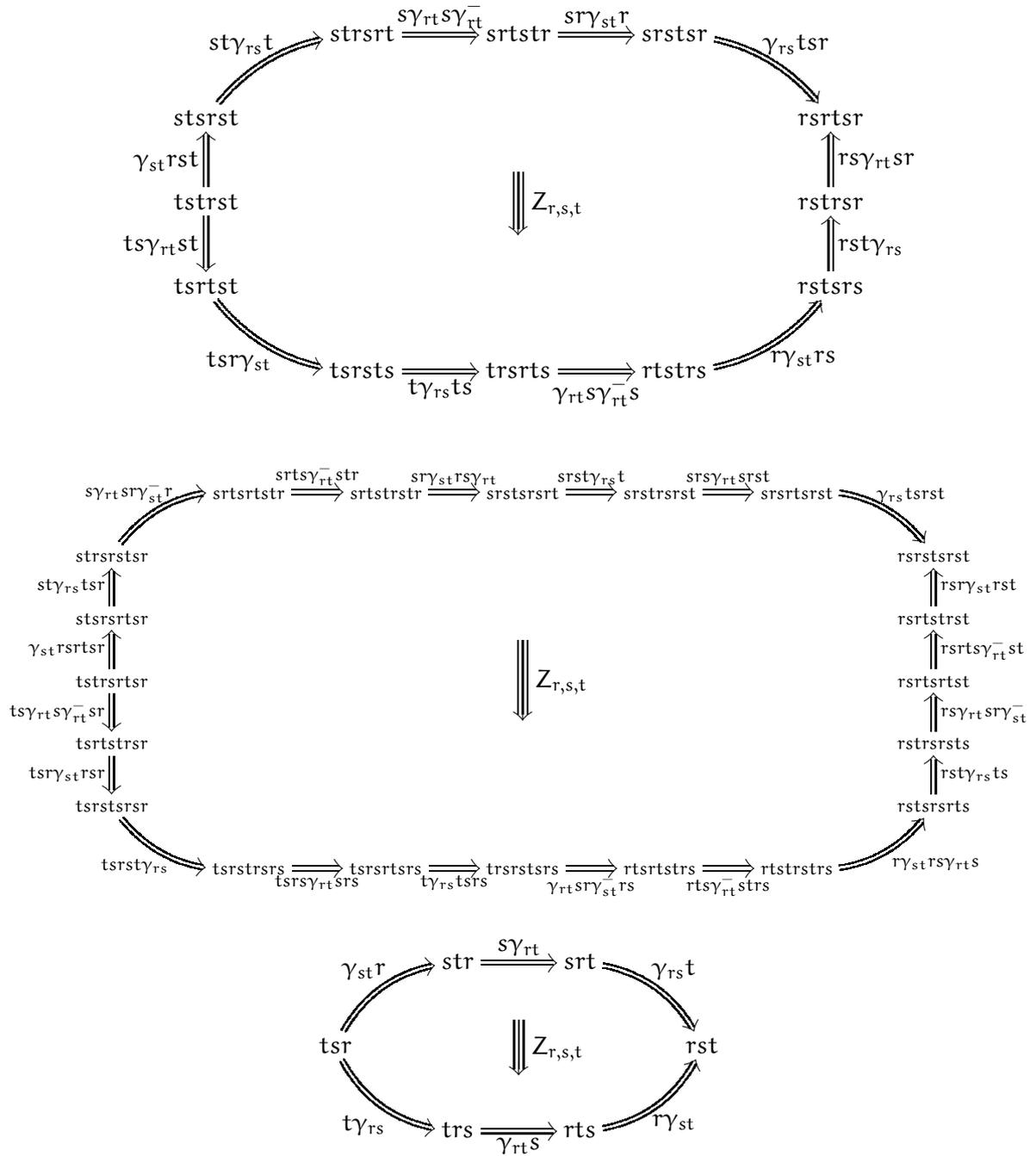

%%%
\begin{figure}[!p]
\renewcommand{\objectstyle}{\scriptstyle} 
\renewcommand{\labelstyle}{\scriptstyle} 
\[
\xymatrix @!C @R=1.2em @C=1.5em{
& srstrsrsrtsrsrt
	\ar@2 [r]
& srsrtsrstrsrsrt
	\ar@2 [r] _{}="src"
& srsrtsrstsrsrst
	\ar@2 [r]
& srsrtsrtstrsrst
	\ar@2@/^2ex/ [dr]
\\
srstsrsrstsrsrt
	\ar@2@/^2ex/ [ur]
&&&&& srsrtstrsrtsrst
	\ar@2 [d]
\\
srtstrsrtstrsrt
	\ar@2 [u]
&&&&& srsrstsrsrtsrst
	\ar@2 [d]
\\
srtsrtstrsrtstr
	\ar@2 [u]
&&&&& rsrsrtsrsrtsrst
\\
srtsrstsrsrstsr
	\ar@2 [u]
&&&&& rsrstrsrsrtsrst
	\ar@2 [u]
\\
srtsrstrsrsrtsr
	\ar@2 [u]
&&&&& rsrstsrsrstsrst
	\ar@2 [u]
\\
strsrsrtsrsrtsr
	\ar@2 [u]
&&&&& rsrtstrsrtstrst
	\ar@2 [u]
\\
stsrsrstsrsrtsr
	\ar@2 [u]
&&&&& rsrtsrtstrsrtst
	\ar@2 [u]
\\
tstrsrstsrsrtsr
	\ar@2 [u]
	\ar@2 [d]
&&&&& rsrtsrstsrsrsts
	\ar@2 [u]
\\
tsrtsrstsrstrsr
	\ar@2 [d]
&&&&& rsrtsrstrsrsrts
	\ar@2 [u]
\\
tsrtsrtstrstrsr
	\ar@2 [d]
&&&&& rstrsrsrtsrsrts
	\ar@2 [u]
\\
tsrtstrsrtstrsr
	\ar@2 [d]
&&&&& rstsrsrstsrsrts
	\ar@2 [u]
\\
tsrstsrsrstsrsr
	\ar@2 [d]
&&&&& rtstrsrtstrsrts
	\ar@2 [u]
\\
tsrstrsrsrtsrsr
	\ar@2 [d]
&&&&& rtsrtstrsrtstrs
	\ar@2 [u]
\\
tsrsrtsrstrsrsr
	\ar@2 [d]
&&&&& rtsrstsrsrstsrs
	\ar@2 [u]
\\
tsrsrtsrstsrsrs
	\ar@2@/_2ex/ [dr]
&&&&& rtsrstrsrsrtsrs
	\ar@2 [u]
\\
& tsrsrtsrtstrsrs
	\ar@2 [r]
& tsrsrtstrsrtsrs
	\ar@2 [r] ^{}="tgt"
& tsrsrstsrsrtsrs
	\ar@2 [r]
& trsrsrtsrsrtsrs
	\ar@2@/_2ex/ [ur]
\ar@3 "src"!<0pt,-180pt>;"tgt"!<0pt,180pt> ^{\displaystyle \: Z_{r,s,t}}
}
\]
\renewcommand{\objectstyle}{\displaystyle}
\renewcommand{\labelstyle}{\displaystyle}
%%%

%%%
\[
\xymatrix @!C @C=4em {
& st\angle{rs}^{p-1}
	\ar@2 [r] ^-{s\gamma_{rt}\angle{rs}^{p-2}}
& (\cdots)
	\ar@2 [r]
	\ar@3 []!<0pt,-30pt>;[dd]!<0pt,30pt> ^{\:Z_{r,s,t}}
& \angle{sr}^p t
	\ar@2@/^3ex/ [dr] ^{\gamma_{rs}t}
\\
t\angle{sr}^p
	\ar@2@/^3ex/ [ur] ^{\gamma_{st}\angle{rs}^{p-1}}
	\ar@2@/_3ex/ [dr] _{t\gamma_{rs}}
&&&& \angle{rs}^p t
\\
& t\angle{rs}^p
	\ar@2 [r] _-{\gamma_{rt}\angle{sr}^{p-1}}
& rt\angle{sr}^{p-1}
	\ar@2 [r] _-{r\gamma_{st}\angle{sr}^{p-2}}
& (\cdots)
	\ar@2@/_3ex/ [ur] 
}
\]
\caption{The $3$-cells $Z_{r,s,t}$ for Coxeter type $H_3$ and $I_2(p)\times A_1$, $p\geq 3$}
\label{Figure:ZHI}
\end{figure}
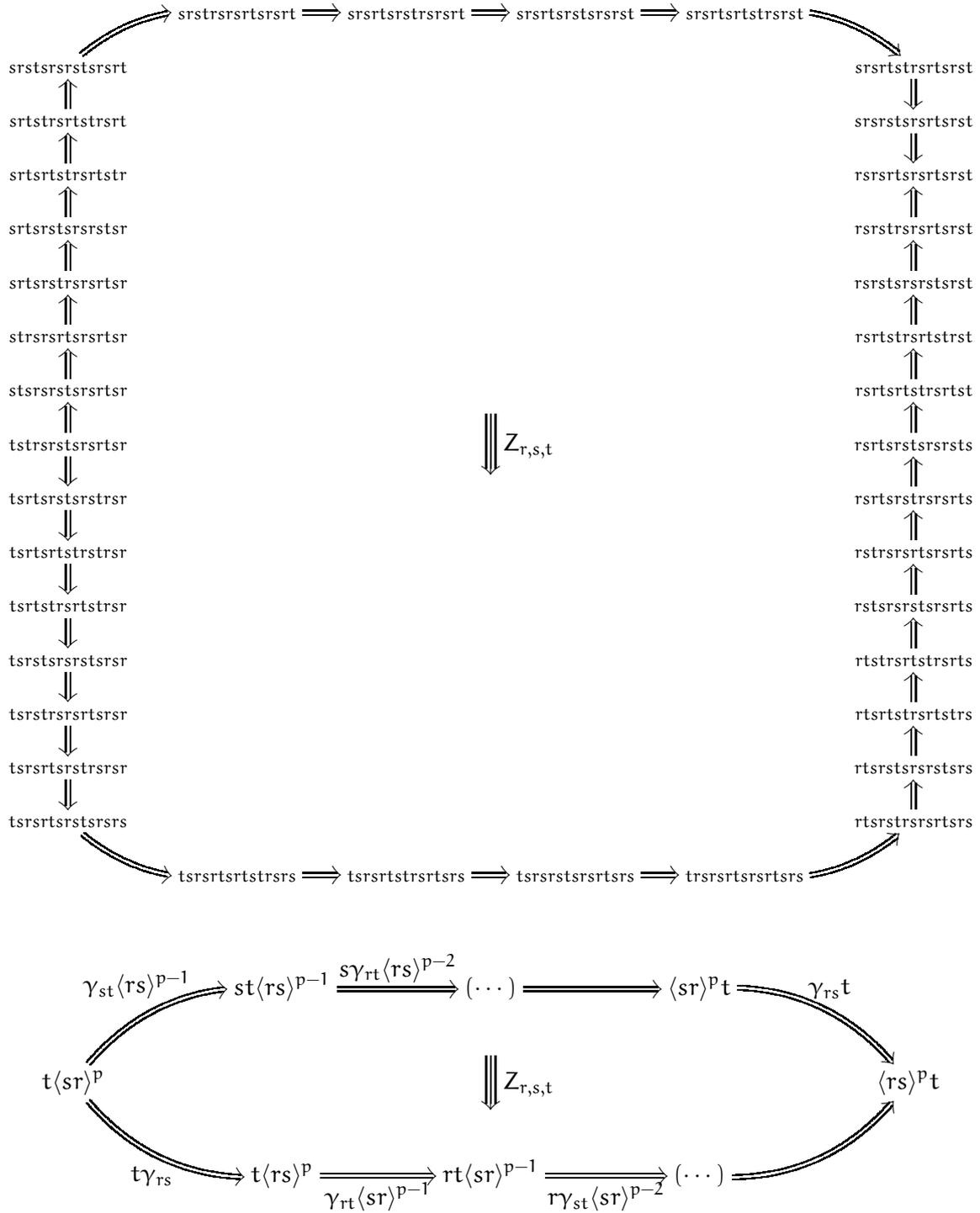

%%%%%%%%%%%%%%%%%%%%%%%%%%%%%%%%%%%%%%%%
\subsubsection{Projection of the \pdf{2}-cells of Garside's presentation}

By construction, the image of a $2$-cell $\alpha_{u,v}$ of $\Gar_3(\W)$ through $\pi$ is given by induction, depending on whether it is essential, collapsible or redundant. 

The essential $2$-cells are the $\alpha_{t,u}$ such that $t>s$ and $u$ is the complement of $t$ in $w_0(s,t)$, where $s=d_{tu}$. The image of $\alpha_{t,u}$ is the corresponding braid relation:
\[
\pi(\alpha_{t,u}) \:=\: \gamma_{s,t}.
\]
The collapsible $2$-cells are the $\alpha_{s,u}$ such that $s=d_{su}$, mapped to the identity of $\pi(su)$. Finally, there are two disjoint cases of redundant $2$-cells : (a) $\alpha_{su,v}$ with $s=d_{su}$ and (b) $\alpha_{s,uv}$ with $s>d_{su}=d_{suv}$. They are mapped through $\pi$ to the source of the $3$-cell $A_{s,u,v}$ after the appropriate Nielsen transformation, giving the following inductive formulas:
\[
\begin{array}{lc}
\text{(a) $s=d_{su}$}
\quad&
\vcenter{\xymatrix @C=2.5em @R=1.5em {
\pi(su)\pi(v)
	\ar@2@/^3ex/ [rrrr] ^{\pi(\alpha_{su,v})} _{}="src"
	\ar@2@/_/ [dr] _(0.33)*+{\pi(\alpha_{s,u}^-)\pi(v)} 
&&&& \pi(suv)
\\
& \pi(s)\pi(u)\pi(v)
	\ar@2 [rr] _{\pi(s)\pi(\alpha_{u,v})} ^{}="tgt"
&& \pi(s)\pi(uv)
	\ar@2@/_/ [ur] _-{\pi(\alpha_{s,uv})}
\ar@{} "src";"tgt" |-{=}
}}
\\~\\
\text{(b) $s>d_{su} = d_{suv}$}
\quad&
\vcenter{\xymatrix @C=2.5em @R=1.5em {
\pi(s)\pi(uv)
	\ar@2@/^3ex/ [rrrr] ^{\pi(\alpha_{s,uv})} _{}="src"
	\ar@2@/_/ [dr] _(0.33)*+{\pi(s)\pi(\alpha_{u,v}^-)} 
&&&& \pi(suv)
\\
& \pi(s)\pi(u)\pi(v)
	\ar@2 [rr] _{\pi(\alpha_{s,u})\pi(v)} ^{}="tgt"
&& \pi(su)\pi(v)
	\ar@2@/_/ [ur] _-{\pi(\alpha_{su,v})}
\ar@{} "src";"tgt" |-{=}
}}
\end{array}
\]

%%%%%%%%%%%%%%%%%%%%%%%%%%%%%%%%%%%%%%%%
\subsubsection{The Coxeter type \pdf{A_1\times A_1\times A_1}}

For $t>s>r$ in $S$ such that $\W_{\ens{r,s,t}}$ is of type $A_1\times A_1\times A_1$, the corresponding essential $3$-cell of $\Gar_3(\W)$ is as follows:
\[
\xymatrix @!C @R=1.5em {
& st|r
	\ar@2 @/^/ [dr] ^-{\alpha_{st,r}}
	\ar@3 []!<0pt,-25pt>;[dd]!<0pt,25pt> ^-{A_{t,s,r}}
\\
t|s|r
	\ar@2 @/^/ [ur] ^-{\alpha_{t,s}|r}
	\ar@2 @/_/ [dr] _-{t|\alpha_{s,r}}
&& rst
\\
& t|rs
	\ar@2 @/_/ [ur] _-{\alpha_{t,rs}}
}
\]
The image $Z_{r,s,t}$ of $A_{t,s,r}$ through $\pi$ is given by the inductive application of $\pi$ to the $2$-cells of its source and target. For the source of $Z_{r,s,t}$, we get $\pi(\alpha_{t,s}|r) = \gamma_{st}r$ and
\begin{align*}
\pi(\alpha_{st,r})
	\:&=\: s\pi(\alpha_{t,r}) \star_1 \pi(\alpha_{s,rt}) \\
	\:&=\: s\gamma_{rt} \star_1 \gamma_{rs}t \star_1 \pi(\alpha_{rs,t}) \\
	\:&=\: s\gamma_{rt} \star_1 \gamma_{rs}t.
\end{align*}
For the target of $Z_{r,s,t}$, we get $\pi(t|\alpha_{s,r}) = t\gamma_{rs}$ and
\begin{align*}
\pi(\alpha_{t,rs})
	\:&=\: \gamma_{rt}s \star_1 \pi(\alpha_{rt,s}) \\
	\:&=\: \gamma_{rt}s \star_1 r\gamma_{st} \star_1 \pi(\alpha_{r,st}) \\
	\:&=\: \gamma_{rt}s \star_1 r\gamma_{st}.
\end{align*}
Hence $Z_{r,s,t}$ is the permutohedron, displayed as the third $3$-cell of Figure~\ref{Figure:ZABAAA}.

%%%%%%%%%%%%%%%%%%%%%%%%%%%%%%%%%%%%%%%%
\subsubsection{The Coxeter type \pdf{A_3}}

If $\W_{\ens{r,s,t}}$ is of type $A_3$ with $t>s>r$, the corresponding essential $3$-cell of $\Gar_3(\W)$ is as follows:
\[
\xymatrix @!C @R=1.5em {
& sts|rst
	\ar@2 @/^/ [dr] ^-{\alpha_{sts,rst}}
	\ar@3 []!<0pt,-25pt>;[dd]!<0pt,25pt> ^-{A_{t,st,rst}}
\\
t|st|rst
	\ar@2 @/^/ [ur] ^-{\alpha_{t,st}|rst}
	\ar@2 @/_/ [dr] _-{t|\alpha_{st,rst}}
&& rst
\\
& t|rsrts
	\ar@2 @/_/ [ur] _-{\alpha_{t,rsrts}}
}
\]
For the source of $Z_{r,s,t}=\pi(A_{t,st,rst})$, we have $\pi(\alpha_{t,st}|rst)=\gamma_{st}rst$ and
\[
\pi(\alpha_{sts,rst}) 
	\:=\: st\pi(\alpha_{s,rst}) \star_1 s\pi(\alpha_{t,rsrt}) \star_1 \pi(\alpha_{s,rstsr}).
\]
Then, we have $st\pi(\alpha_{s,rst})=st\gamma_{rs}t$ and $\pi(\alpha_{s,rstsr})=\gamma_{rs}tsr$, together with
\[
s\pi(\alpha_{t,rsrt}) 
	\:=\: s\gamma_{rt}srt \star_1 sr\pi(\alpha_{t,srt}) \star_1 s\pi(\alpha_{r,stsr}).
\]
Finally, we get $s\pi(\alpha_{r,stsr})=1_{srstsr}$ and
\[
sr\pi(\alpha_{t,srt})
	\:=\: srt\pi(\alpha_{st,r})^- \star_1 sr\gamma_{st}r \star_1 sr\pi(\alpha_{sts,r})
	\:=\: srts\gamma_{rt}^- \star_1 sr\gamma_{st}r.
\]
Wrapping up all those computations, we get the source of $Z_{r,s,t}$ as displayed at the top of Figure~\ref{Figure:ZABAAA}, where an exchange relation has been applied to contract $s\gamma_{rt}srt \star_1 srts\gamma_{rt}^-$ into $s\gamma_{rt}s\gamma_{rt}^-$. The target of $Z_{r,s,t}$ is obtained by similar computations.

Let us note that we can have $\W_{\ens{r,s,t}}$ of type $A_3$, but with another ordering on the elements $r$, $s$, $t$. For example, if $s>r>t$, the $3$-cell $Z_{s,r,t}$ is the image of $A_{t,rt,srt}$ through $\pi$, obtained, up to a Nielsen equivalence, as follows: one considers the $3$-cell of the case $r>s>t$ with $r$ and~$s$ exchanged, then one replaces every occurrence of the $2$-cell $\gamma_{r,s}$, that is not in $\Art_2(\W)$ since $s<r$, by $\gamma_{s,r}^-$.

%%%%%%%%%%%%%%%%%%%%%%%%%%%%%%%%%%%
%%%%%%%%%%%%%%%%%%%%%%%%%%%%%%%%%%%
\section{Coherent presentations and actions on categories}
\label{Section:Actions}
%%%%%%%%%%%%%%%%%%%%%%%%%%%%%%%%%%%
%%%%%%%%%%%%%%%%%%%%%%%%%%%%%%%%%%%

In this section, we establish the relationship between our results on coherent presentations of monoids and Deligne's notion of an action on a category. In particular, we obtain that Deligne's Theorem~\cite[Theorem~1.5]{Deligne97} is equivalent to Theorem~\ref{Theorem:GarsideCoherentPresentation}. We prove that, up to equivalence, the actions of a monoid~$\M$ on categories are the same as the $2$-functors from $\cl{\Sigma}$ to $\Cat$, where $\Sigma$ is any coherent presentation of~$\M$.

%%%%%%%%%%%%%%%%%%%%%%%%%%%%%
\subsection{\pdf{2}-representations of \pdf{2}-categories}
%%%%%%%%%%%%%%%%%%%%%%%%%%%%%

%%%%%%%%%%%%%%%%%%%%%%%
\subsubsection{\pdf{2}-representations}

We recall from~\cite{Elgueta08} that, given $2$-categories $\Cr$ and $\Dr$, a \emph{$2$-representation of $\Cr$ in $\Dr$} is a pseudofunctor $F:\Cr\fl\Dr$. This is a weakened notion of $2$-functor, specified by:
\begin{itemize}
\item for every $0$-cell $x$ of $\Cr$, a $0$-cell $F(x)$ of $\Dr$,
\item for every $1$-cell $u:x\fl y$ of $\Cr$, a $1$-cell $F(u):F(x)\fl F(y)$ of $\Dr$, 
\item for every $2$-cell $f:u\dfl v$ of $\Cr$, a $2$-cell $F(f):F(u)\dfl F(v)$ of $\Dr$.
\end{itemize}
As for $2$-functors, the data are required to be compatible with vertical composition, in a strict way:
\begin{itemize}
\item for all $2$-cells $f:u\dfl v:x\fl y$ and $g:v\dfl w:x\fl y$ of $\Cr$, we have $F(f\star_1g) = F(f)\star_1 F(g)$,
\item for every $1$-cell $u$ of $\Cr$,  we have $F(1_u)=1_{F(u)}$.
\end{itemize}
The data are also compatible with horizontal composition, but only up to coherent isomorphisms: 
\begin{itemize}
\item for all $1$-cells $u:x\fl y$ and $v:y\fl z$ of $\Cr$, an invertible $2$-cell $F_{u,v} : F(u)F(v) \dfl F(uv)$ of~$\Dr$, natural in $u$ and $v$,
\item for every $0$-cell $x$ of $\Cr$, an invertible $2$-cell $F_{x} : 1_{F(x)} \dfl F(1_x)$ of $\Dr$.
\end{itemize}
Finally, these $2$-cells are required to satisfy the following monoidal coherence relations in $\Dr$:
\begin{itemize}
\item for all $1$-cells $u:x\fl y$, $v:y\fl z$ and $w:z\fl t$ of $\Cr$,
\[
\vcenter{\xymatrix{
& {\sm F(y)}
	\ar [r] ^-{\sm F(v)} 
	\ar @2 []!<0pt,-10pt>;[d]!<0pt,15pt> ^-{F_{u,v}} 
& {\sm F(z)}
	\ar @/^/ [dr] ^-{\sm F(w)}
	\ar @2 []!<0pt,-20pt>;[d]!<0pt,-15pt> ^-{F_{uv,w}} 
\\
{\sm F(x)}
	\ar @/^/ [ur] ^-{\sm F(u)}
	\ar @/_3ex/ [urr] |-{\sm F(uv)} 
	\ar @/_3ex/ [rrr] _-{\sm F(uvw)} 
&&& {\sm F(t)}
}}
\qquad = \qquad
\vcenter{\xymatrix{
& {\sm F(y)}
	\ar [r] ^-{\sm F(v)} 
	\ar @/_3ex/ [drr] |-{\sm F(vw)} 
	\ar @2 []!<0pt,-20pt>;[d]!<0pt,-15pt> _-{F_{u,vw}} 
& {\sm F(z)}
	\ar @/^/ [dr] ^-{\sm F(w)}
	\ar @2 []!<0pt,-10pt>;[d]!<0pt,15pt> _-{F_{v,w}} 	
\\
{\sm F(x)}
	\ar @/^/ [ur] ^-{\sm F(u)}
	\ar @/_3ex/ [rrr] _-{\sm F(uvw)}
&&& {\sm F(t)}
}}
\]
\item for every $1$-cell $u:x\fl y$ of $\Cr$,
\[
\vcenter{\xymatrix{
& {\sm F(x)}
	\ar @/^/ [dr] ^-{\sm F(u)}
\\
{\sm F(x)}
	\ar @/^5ex/ [ur] ^(0.33){\sm 1_{F(x)}} ^-{}="src"
	\ar @/_/ [ur] |(0.4){\sm F(1_x)} _-{}="tgt1"
\ar@2 "src"!<7.5pt,-7.5pt>;"tgt1"!<-7.5pt,7.5pt> ^-{F_x}
	\ar @/_/ [rr] _-{\sm F(u)} _-{}="tgt2"
\ar @2 "1,2"!<0pt,-15pt>;"tgt2"!<0pt,10pt> ^-{F_{1_x,u}}
&& {\sm F(y)}
}}
\quad = \quad 
\xymatrix @C=4em {
{\sm F(x)}
	\ar@/^3.5ex/ [r] ^{\sm F(u)} _{}="src"
	\ar@/_3.5ex/ [r] _{\sm F(u)} ^{}="tgt"
	\ar@2 "src"!<-5pt,-10pt>;"tgt"!<-5pt,10pt> ^-{1_{F(u)}}
& {\sm F(y)}
}
\quad = \quad
\vcenter{\xymatrix{
& {\sm F(y)}
	\ar @/^5ex/ [dr] ^(0.67){\sm 1_{F(y)}} ^-{}="src"
	\ar @/_/ [dr] |(0.6){\sm F(1_y)} _-{}="tgt1"
\ar@2 "src"!<-7.5pt,-7.5pt>;"tgt1"!<7.5pt,7.5pt> _-{F_y}
\\
{\sm F(x)}
	\ar @/^/ [ur] ^-{\sm F(u)} 
	\ar @/_/ [rr] _-{\sm F(u)} _-{}="tgt2"
\ar @2 "1,2"!<0pt,-15pt>;"tgt2"!<0pt,10pt> _-{F_{u,1_y}}
&& {\sm F(y)}
}}
\]
\end{itemize}
As usual with monoidal coherence relations, this implies that, for every sequence $(u_1, \dots, u_n)$ of pairwise composable $1$-cells in $\Cr$, there exists a unique invertible $2$-cell 
\[
F_{u_1,\dots,u_n} \::\: F(u_1)\cdots F(u_n) \:\Longrightarrow\: F(u_1\cdots u_n)
\] 
in $\Dr$ built from the coherence isomorphisms of $F$. A $2$-functor is just a pseudofunctor whose coherence $2$-cells are identities: it can be seen as a \emph{strict} $2$-representation. 

The notion of $2$-representation has been introduced by Elgueta for $2$-groups in~\cite{Elgueta08}. It is also studied by Ganter and Kapranov in~\cite{GanterKapranov08} in the special case of groups. In~\cite{Rouquier08}, Rouquier considers the more general case of $2$-representations of bicategories. Among concrete target $2$-categories for $2$-representations, natural choices are the $2$-categories of $2$-vector spaces, either from Kapranov and Voevodsky~\cite{KapranovVoevodsky94} or from Baez and Crans~\cite{BaezCrans04}, of $2$-Hilbert spaces~\cite{Baez97} or of categories~\cite{Deligne97}.

%%%%%%%%%%%%%%%%%%%%%%%%%%%%%
\subsubsection{Morphisms of \pdf{2}-representations}

If $F,G:\Cr\fl\Dr$ are $2$-representations of $\Cr$ into $\Dr$, a \emph{morphism of $2$-representations from $F$ to $G$} is a pseudonatural transformation $\alpha:F\dfl G$ between the corresponding pseudofunctors:
\begin{itemize}
\item for every $0$-cell $x$ of $\Cr$, a $1$-cell $\alpha_x:F(x)\fl G(x)$ of $\Dr$,
\item for every $1$-cell $u:x\fl y$ of $\Cr$, an invertible $2$-cell of $\Dr$ as follows
\[
\xymatrix@R=1em{
& {\sm F(y)}
	\ar@/^/ [dr] ^-{\sm \alpha_y}
\\
{\sm F(x)}
	\ar@/^/ [ur] ^-{\sm F(u)}
	\ar@/_/ [dr] _-{\sm \alpha_x}
&& {\sm G(y)}
\\
& {\sm G(x)}
	\ar@/_/ [ur] _-{\sm G(u)}
\ar@2 "1,2"!<0pt,-15pt>;"3,2"!<0pt,15pt> _-{\simeq} ^-{\alpha_u}	
}
\]
\end{itemize}
These data must satisfy several coherence relations:
\begin{itemize}
\item for every $2$-cell $f:u\dfl v:x\fl y$ of $\Cr$,
\[
\vcenter{\xymatrix @C=3em @R=2em {
& {\sm F(y)}
	\ar@/^/ [dr] ^-{\sm \alpha_y}
\\
{\sm F(x)}
	\ar@/^5ex/ [ur] ^(0.4){\sm F(u)} ^-{}="src"
	\ar@/_/ [ur] |-{\sm F(v)} _-{}="tgt"
\ar@2 "src"!<2.5pt,-7.5pt>;"tgt"!<-10pt,7.5pt> ^-{F(f)}
	\ar@/_/ [dr] _-{\sm \alpha_x}
&& {\sm G(y)}
\\
& {\sm G(x)}
	\ar@/_/ [ur] _-{\sm G(v)}
\ar@2 "1,2"!<0pt,-25pt>;"3,2"!<0pt,25pt> ^-{\alpha_v}	
}}
\qquad = \qquad
\vcenter{\xymatrix @C=3em @R=2em {
& {\sm F(y)}
	\ar@/^/ [dr] ^-{\sm \alpha_y}
\\
{\sm F(x)}
	\ar@/^/ [ur] ^-{\sm F(u)}
	\ar@/_/ [dr] _-{\sm \alpha_x}
&& {\sm G(y)}
\\
& {\sm G(x)}
	\ar@/^/ [ur] |-{\sm G(u)} ^-{}="src"
	\ar@/_5ex/ [ur] _(0.6){\sm G(v)} _-{}="tgt"
\ar@2 "src"!<10pt,-7.5pt>;"tgt"!<-2.5pt,7.5pt> _-{G(f)}
\ar@2 "1,2"!<0pt,-25pt>;"3,2"!<0pt,25pt> _-{\alpha_u}	
}}
\]
\item for all $1$-cells $u:x\fl y$ and $v:y\fl z$ of $\Cr$,
\[
\vcenter{\xymatrix @C=3em @R=2em{
{\sm F(y)}
	\ar@/^/ [r] ^-{\sm F(v)}
& {\sm F(z) }
	\ar@/^/ [dr] ^-{\sm \alpha_z}
\\
{\sm F(x)}
	\ar@/^/ [u] ^-{\sm F(u)}
	\ar@/_/ [ur] |-{\sm F(uv)} _-{}="tgt"
\ar@2 "1,1"!<12.5pt,-7.5pt>;"tgt"!<-10pt,7.5pt> ^-{F_{u,v}}
	\ar@/_/ [dr] _-{\sm \alpha_x}
&& {\sm G(z)}
\\
& {\sm G(x)}
	\ar@/_/ [ur] _-{\sm G(uv)}
\ar@2 "1,2"!<0pt,-25pt>;"3,2"!<0pt,25pt> ^-{\alpha_{uv}}	
}}
\qquad = \qquad
\vcenter{\xymatrix @!C @C=3em @R=2em {
{\sm F(y)}
	\ar@/^/ [r] ^-{\sm F(v)}
	\ar [dr] |-{\sm \alpha_y} ^-{}="src1"
& {\sm F(z) }
	\ar@/^/ [dr] ^-{\sm \alpha_z}
\ar@2 []!<0pt,-12.5pt>;[d]!<0pt,12.5pt> ^-{\alpha_v}
\\
{\sm F(x)}
	\ar@/^/ [u] ^-{\sm F(u)}
	\ar@/_/ [dr] _-{\sm \alpha_x} _-{}="tgt1"
\ar@2 "src1"!<-4.3pt,-12.5pt>;"tgt1"!<0pt,12.5pt> ^(0.75){\alpha_u}
& {\sm G(y)}
	\ar [r] |-{\sm G(v)}
& {\sm G(z)}
\\
& {\sm G(x)}
	\ar [u] |-{\sm G(u)}
	\ar@/_3ex/ [ur] _-{\sm G(uv)} _-{}="tgt2"
\ar@2 "2,2"!<15pt,-15pt>;"tgt2"!<-10pt,7.5pt> ^-{G_{u,v}}	
}}
\]
\item for every $0$-cell $x$ of $\Cr$,
\[
\vcenter{\xymatrix @C=3em @R=2em {
& {\sm F(x)}
	\ar@/^/ [dr] ^-{\sm \alpha_x}
\\
{\sm F(x)}
	\ar@/^5ex/ [ur] ^(0.4){\sm 1_{F(x)}} ^-{}="src"
	\ar@/_/ [ur] |-{\sm F(1_x)} _-{}="tgt"
\ar@2 "src"!<2.5pt,-7.5pt>;"tgt"!<-10pt,7.5pt> ^-{F_x}
	\ar@/_/ [dr] _-{\sm \alpha_x}
&& {\sm G(x)}
\\
& {\sm G(x)}
	\ar@/_/ [ur] _-{\sm G(1_x)}
\ar@2 "1,2"!<0pt,-25pt>;"3,2"!<0pt,25pt> ^-{\alpha_{1_x}}	
}}
\qquad = \qquad
\xymatrix @C=3em{
{\sm F(x)}
	\ar [r] ^-{\sm \alpha_x}
& {\sm G(x)}
	\ar@/^3ex/ [r] ^-{\sm 1_{G(x)}} ^-{}="src"
	\ar@/_3ex/ [r] _-{\sm G(1_x)} _-{}="tgt"
\ar@2 "src"!<-5pt,-10pt>;"tgt"!<-5pt,10pt> ^-{G_x}
& {\sm G(x)}
}
\]
\end{itemize}

%%%%%%%%%%%%%%%%%%
\subsubsection{Categories of \pdf{2}-representations}

If $F,G,H:\Cr\fl\Dr$ are $2$-representations and if $\alpha:F\dfl G$ and $\beta:G\dfl H$ are morphisms of $2$-representations, the composite morphism $\alpha\star\beta:F\dfl H$ is defined by:
\begin{itemize}
\item if $x$ is a $0$-cell of $\Cr$, the $1$-cell $(\alpha\star\beta)_x:F(x)\fl H(x)$ of $\Dr$ is the composite
\[
\xymatrix@C=3em{
{F(x)}
	\ar [r] ^-{\alpha_x}
& {G(x)}
	\ar [r] ^-{\beta_x}
& {H(x)}
}
\]
\item if $u:x\fl y$ is a $1$-cell of $\Cr$, the invertible $2$-cell $(\alpha\star\beta)_u$ of $\Dr$ is defined by
\[
\vcenter{\xymatrix @R=1.5em{
& {\sm F(y)}
	\ar@/^/ [dr] ^-{\sm (\alpha\star\beta)_y}
\\
{\sm F(x)}
	\ar@/^/ [ur] ^-{\sm F(u)}
	\ar@/_/ [dr] _-{\sm (\alpha\star\beta)_x}
&& {\sm H(y)}
\\
& {\sm H(x)}
	\ar@/_/ [ur] _-{\sm H(u)}
\ar@2 "1,2"!<0pt,-15pt>;"3,2"!<0pt,15pt> |-{(\alpha\star\beta)_u}	
}}
\qquad = \qquad
\vcenter{\xymatrix @R=1em{
& {\sm F(y)}
	\ar@/^/ [dr] ^-{\sm \alpha_y}
\\
{\sm F(x)}
	\ar@/^/ [ur] ^-{\sm F(u)}
	\ar@/_/ [dr] _-{\sm \alpha_x}
&& {\sm G(y)}
	\ar@/^/ [dr] ^-{\sm \beta_y}
\\
& {\sm G(x)}
	\ar [ur] |-{\sm G(u)}
	\ar@/_/ [dr] _-{\sm \beta_x}
\ar@2 "1,2"!<0pt,-17.5pt>;"3,2"!<0pt,17.5pt> ^-{\alpha_u}
&& {\sm H(y)}
\\
&& {\sm H(x)}
	\ar@/_/ [ur] _-{\sm H(u)}
\ar@2 "2,3"!<0pt,-17.5pt>;"4,3"!<0pt,17.5pt> ^-{\beta_u}
}}
\]
\end{itemize}
The category of $2$-representations of $\Cr$ into~$\Dr$ is denoted by $2\Rep(\Cr,\Dr)$ and its full subcategory whose objects are the $2$-functors is denoted by $2\Cat(\Cr,\Dr)$. 

%%%%%%%%%%%%%%%%%%%%%%%%%%
\subsubsection{Actions of monoids on categories}

If $\M$ is a monoid, we see it as a $2$-category with exactly one $0$-cell $\bullet$, with the elements of $\M$ as $1$-cells and with identity $2$-cells only. We define the category of \emph{actions of $\M$ on categories} as the category $\Act(\M) = 2\Rep(\M,\Cat)$ of $2$-representations of~$\M$ in~$\Cat$. Expanding the definition, an action~$T$ of~$\M$ is specified by a category $\C=T(\bullet)$,  an endofunctor $T(u):\C\fl\C$ for every element~$u$ of~$\M$, a natural isomorphism $T_{u,v}:T(u)T(v)\dfl T(uv)$ for every pair $(u,v)$ of elements of~$\M$ and a natural isomorphism $T_{\bullet}:1_{\C}\dfl T(1)$ such that:
\begin{itemize}
\item for every triple $(u,v,w)$ of elements of $\M$, the following diagram commutes:
\[
\xymatrix @!C @R=1em @C=1em{
& T(uv)T(w)
	\ar@2 @/^2ex/ [dr] ^-{T_{uv,w}}
	\ar@{} [dd] |-{=}
\\
T(u)T(v)T(w)
	\ar@2 @/^2ex/ [ur] ^-{T_{u,v}T(w)}
	\ar@2 @/_2ex/ [dr] _-{T(u)T_{v,w}}
&& T(uvw)
\\
& T(u)T(vw)
	\ar@2 @/_2ex/ [ur] _-{T_{u,vw}}
}
\]
\item for every element $u$ of $\M$, the following two diagrams commute:
\[
\xymatrix @!C @R=1.5em @C=1em {
& T(1)T(u)
	\ar@2 @/^2ex/ [dr] ^-{T_{1,u}}
\\
T(u)
	\ar@2 @/^2ex/ [ur] ^-{T_{\bullet} T(u)}
	\ar@{=} [rr] _-{}="tgt"
&& T(u)
	\ar@{} "1,2";"tgt" |-{=}
}
\qquad\qquad
\xymatrix @!C @R=1.5em @C=1em {
& T(u)T(1)
	\ar@2 @/^2ex/ [dr] ^-{T_{u,1}}
\\
T(u) 
	\ar@2 @/^2ex/ [ur] ^-{T(u) T_{\bullet}}
	\ar@{=} [rr] _-{}="tgt"
&& T(u)
	\ar@{} "1,2";"tgt" |-{=}
}
\]
\end{itemize}
This definition corresponds to the notion of \emph{unital} action of $\M$ on $\C$ that Deligne considers in~\cite{Deligne97}. For semigroups, he proves that unital actions are equivalent to nonunital actions. For any monoid $\M$, this fact is a consequence of the Tietze equivalence of the standard coherent presentation $\Std_3(\M)$ and the reduced standard coherent presentation $\Std'_3(\M)$, given in Example~\ref{Example:ReducedStandardCoherentPresentation}, together with Theorem~\ref{Theorem:CoherentAction}.

\begin{remark}
If $S$ is an action of $\M$ on a category $\C$ and $T$ is an action of $\M$ on a category~$\D$, by expanding the definition, we get that a morphism of actions $\alpha$ from $S$ to $T$ is specified by a functor $F:\C\fl\D$, corresponding to the component of $\alpha$ at the unique $0$-cell of $\M$, and, for every element $u$ of $\M$, a natural isomorphism $\alpha_u:S(u)F\dfl FT(u)$. These data must satisfy the coherence conditions of a pseudonatural transformation. Those morphisms of actions of monoids on categories differ from the ones of Deligne in~\cite{Deligne97}. Indeed, he considers morphisms between actions of $\M$ on the same category $\C$, such that the functor $F$ is the identity of $\C$, but where the natural transformation $\alpha_u$ is not necessarily an isomorphism: those are the \emph{icons} between the corresponding pseudofunctors, as introduced by Lack in~\cite{Lack10} as a special case of \emph{oplax} natural transformations (defined as pseudonatural transformations whose component $2$-cells are not necessarily invertible). Here we follow Elgueta and consider pseudonatural transformations, but the results and proofs can be adapted to icons or generalised to oplax natural transformations. 
\end{remark}

The main theorem of this section relates the coherent presentations and the $2$-representations of a category. It is a direct consequence of Theorem~\ref{Theorem:CoherentPresentationsCofibrantApproximations} and of Proposition~\ref{Proposition:ActionCofibrantApproximation}, whose proof is the objective of the rest of this section.

\begin{theorem}
\label{Theorem:CoherentAction}
Let $\C$ be a category, let $\Sigma$ be an extended presentation of $\C$. The following assertions are equivalent:
\begin{enumerate}[\bf i)]
\item the $(3,1)$-polygraph $\Sigma$ is a coherent presentation of $\C$;
\item for every $2$-category $\Cr$, there is an equivalence of categories
\[
2\Rep(\C,\Cr) \:\approx\: 2\Cat(\cl{\Sigma},\Cr)
\]
that is natural in $\Cr$.
\end{enumerate}
\end{theorem}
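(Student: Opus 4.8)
The plan is to deduce Theorem~\ref{Theorem:CoherentAction} by reducing it, via Theorem~\ref{Theorem:CoherentPresentationsCofibrantApproximations}, to a statement about cofibrant approximations in Lack's model structure on $2$-categories, and then to invoke the universal property of $2\Cat(-,\Cr)$ restricted to $2$-representations. Concretely, I would first recall that for any $2$-categories $\Cr$ and $\Dr$, and any cofibrant $2$-category $\tilde{\Dr}$ weakly equivalent to $\Dr$ through a weak equivalence $P:\tilde{\Dr}\fl\Dr$, precomposition by $P$ induces an equivalence of categories $2\Cat(\tilde{\Dr},\Cr)\approx 2\Rep(\Dr,\Cr)$, naturally in $\Cr$. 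This is the key ``cofibrant replacement computes pseudofunctors'' lemma: a $2$-functor out of a cofibrant (i.e.\ free on $1$-cells) $2$-category, composed with the pseudofunctor quasi-inverse of $P$, yields a pseudofunctor out of $\Dr$; conversely, a pseudofunctor $\Dr\fl\Cr$ can be strictified along the free $1$-skeleton of $\tilde{\Dr}$ since one is free to choose images of generating $1$-cells and the coherence isomorphisms absorb the defect of strictness. I expect this lemma to be isolated as Proposition~\ref{Proposition:ActionCofibrantApproximation} (referenced but not yet proved in the excerpt), so I may simply cite it.

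Granting that lemma, the argument is short. Assume i): $\Sigma$ is a coherent presentation of $\C$. By Theorem~\ref{Theorem:CoherentPresentationsCofibrantApproximations}, the $(2,1)$-category $\cl{\Sigma}$ is a cofibrant approximation of $\C$; in particular it is cofibrant and there is a weak equivalence $\cl{\Sigma}\fl\C$. Applying the lemma with $\Dr=\C$ and $\tilde{\Dr}=\cl{\Sigma}$ gives, for every $2$-category $\Cr$, an equivalence $2\Cat(\cl{\Sigma},\Cr)\approx 2\Rep(\C,\Cr)$ natural in $\Cr$, which is assertion ii). Conversely, assume ii). Taking $\Cr=\Cat$ and using that $2\Rep(\C,\Cat)=\Act(\C)$ contains in particular the standard action does not immediately give coherence, so instead I would test ii) against a discriminating target. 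The natural choice is to take $\Cr=\cl{\Sigma}$ itself, or more robustly to compare with the standard cofibrant approximation: ii) for all $\Cr$ forces $\cl{\Sigma}$ to corepresent $2\Rep(\C,-)$ on the $2$-category level, and $\rep{\C}$ (Example~\ref{Example:StandardCofibrantApproximation}), being a cofibrant approximation of $\C$, also corepresents it via the lemma. Hence $2\Cat(\cl{\Sigma},\Cr)\approx 2\Cat(\rep{\C},\Cr)$ naturally in $\Cr$; since both $\cl{\Sigma}$ and $\rep{\C}$ are cofibrant, a Yoneda argument in the homotopy category of $2$-categories shows $\cl{\Sigma}$ is weakly equivalent to $\rep{\C}$, hence to $\C$, so $\cl{\Sigma}$ is a cofibrant approximation of $\C$ and Theorem~\ref{Theorem:CoherentPresentationsCofibrantApproximations} yields i).

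The main obstacle will be making the converse direction (ii $\Rightarrow$ i) rigorous: passing from a natural equivalence of hom-categories $2\Cat(\cl{\Sigma},-)\approx 2\Cat(\rep{\C},-)$ on cofibrant objects to an actual weak equivalence $\cl{\Sigma}\simeq\rep{\C}$ requires a $2$-categorical Yoneda lemma valid up to the relevant notion of equivalence, together with the fact that $2\Cat(\cl{\Sigma},-)$ and $2\Cat(\rep{\C},-)$ already see only the homotopy type because the sources are cofibrant. One must also check the naturality in $\Cr$ is genuine pseudonaturality compatible with the equivalences, and that the quasi-inverses can be chosen coherently; this is where most of the care goes. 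The forward direction (i $\Rightarrow$ ii) is essentially a repackaging of Theorem~\ref{Theorem:CoherentPresentationsCofibrantApproximations} and the strictification lemma, and should be routine once that lemma is in hand.
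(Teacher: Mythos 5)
Your overall architecture coincides with the paper's: Theorem~\ref{Theorem:CoherentAction} is obtained by combining Theorem~\ref{Theorem:CoherentPresentationsCofibrantApproximations} with Proposition~\ref{Proposition:ActionCofibrantApproximation}, and the forward direction of that proposition rests on exactly the strictification you describe (isolated in the paper as Proposition~\ref{PropEquivCofib}: for cofibrant source the inclusion $2\Cat(\Cr,\Dr)\to 2\Rep(\Cr,\Dr)$ is an equivalence, with quasi-inverse an explicit strictification functor $\rep{F}$ built from the coherence $2$-cells $F_{u_1,\dots,u_n}$). Where you diverge is the converse. You propose to compare $\cl{\Sigma}$ with the standard cofibrant approximation $\rep{\C}$ and then invoke a Yoneda argument ``in the homotopy category of $2$-categories'', and you rightly flag this as the delicate step. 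The paper avoids both the detour through $\rep{\C}$ and any appeal to a homotopy category: Lemma~\ref{LemEquivWE} shows directly that a natural equivalence $2\Rep(\Cr,\Er)\approx 2\Rep(\Dr,\Er)$ (natural in $\Er$) forces $\Cr$ and $\Dr$ to be pseudoequivalent, by evaluating the comparison functors at identities. Writing $\Phi_{\Er}$ and $\Psi_{\Er}$ for the two halves of the equivalence, one sets $F=\Psi_{\Dr}(1_{\Dr})$ and $G=\Phi_{\Cr}(1_{\Cr})$; the naturality square for $\Phi$ applied to $F$ gives $F\circ\Phi_{\Cr}(K)=\Phi_{\Dr}(F\circ K)$, so taking $K=1_{\Cr}$ yields $FG=\Phi_{\Dr}\Psi_{\Dr}(1_{\Dr})\simeq 1_{\Dr}$, and symmetrically $GF\simeq 1_{\Cr}$. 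Combined with Proposition~\ref{PropEquivCofib} to trade $2\Cat(\cl{\Sigma},-)$ for $2\Rep(\cl{\Sigma},-)$, this produces the pseudoequivalence between $\C$ and $\cl{\Sigma}$ at the level of actual pseudofunctors, which is precisely the definition of cofibrant approximation used in the paper; Theorem~\ref{Theorem:CoherentPresentationsCofibrantApproximations} then gives~i). Your route through $\rep{\C}$ would also work, but the evaluation-at-identity argument is the missing ingredient needed to make it rigorous, and once it is available the comparison with $\rep{\C}$ becomes superfluous.
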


%%%%%%%%%%%%%%%%%%%%%%%%%%%%%%%%%%
\subsection{\pdf{2}-representations of cofibrant \pdf{2}-categories}
\label{suse:2repCofib}
%%%%%%%%%%%%%%%%%%%%%%%%%%%%%%%%%%

Let us fix $2$-categories $\Cr$ and $\Dr$, with $\Cr$ cofibrant. Our objective is to define a ``strictification'' functor 
\[
\rep{\cdot} \::\: 2\Rep(\Cr,\Dr) \:\longrightarrow\: 2\Cat(\Cr,\Dr)
\] 
and to prove that it is a quasi-inverse for the canonical inclusion functor of $2\Cat(\Cr,\Dr)$ into $2\Rep(\Cr,\Dr)$.

%%%%
\subsubsection{Strictification of \pdf{2}-representations}

Let $F:\Cr\fl\Dr$ be a $2$-representation. Let us define the $2$-functor $\rep{F}:\Cr\fl\Dr$, dimension after dimension. On $0$-cells, $\rep{F}$ takes the same values as $F$. Since $\Cr$ is cofibrant, its underlying $1$-category is free: on generating $1$-cells, $\rep{F}$ is equal to $F$ and, then, it is extended by functoriality on every $1$-cell. Hence, if $u=a_1\cdots a_n$ is a $1$-cell of $\Cr$, where the $a_i$s are generating $1$-cells, we have
\[
\rep{F}(u) \:=\: F(a_1)\cdots F(a_n).
\]
From the monoidal coherence relations satisfied by $F$, there is a unique invertible $2$-cell in $\Dr$ 
\[
\xymatrix@C=4em{
{\rep{F}(u) \:=\: F(a_1)\cdots F(a_n) \quad}
	\ar@2[r] ^-{F_{a_1,\dots,a_n}} 
& {\quad F(a_1\cdots a_n) \:=\: F(u)}
}
\]
from $\rep{F}(u)$ to $F(u)$, built from the coherence $2$-cells of $F$. Since the decomposition of $u$ in generators is unique, we simply denote this $2$-cell by $F_u$. Let $f:u\dfl v:x\fl y$ be a $2$-cell of $\Cr$. We define $\rep{F}(f)$ as the following composite $2$-cell of $\Dr$, where the double arrows, which always go from top to bottom, have been omitted for readability:
\[
\xymatrix{
{\sm F(x)}
	\ar@/^2em/ [rr] ^{\sm\rep{F}(u)} ^-{}="src"
	\ar@/_2em/ [rr] _{\sm \rep{F}(v)} _-{}="tgt"
\ar@{} "src";"tgt" |-{\rep{F}(f)}
&& {\sm F(y)}
}
\qquad=\qquad
\xymatrix{
{\sm F(x)}
	\ar@/^4em/ [rr] ^(0.75){\sm\rep{F}(u)} ^-{}="src1"
	\ar@/^1.5em/ [rr] |-{\sm F(u)} _-{}="tgt1" ^-{}="src2"
	\ar@/_1.5em/ [rr] |-{\sm F(v)} _-{}="tgt2" ^-{}="src3"
	\ar@/_4em/ [rr] _(0.75){\sm \rep{F}(v)} _-{}="tgt3"
\ar@{} "src1";"tgt1" |-{F_u}
\ar@{} "src2";"tgt2" |-{F(f)}
\ar@{} "src3";"tgt3" |-{F_v^-}
&& {\sm F(y)}
}
\]
As a direct consequence, we get that $\rep{F}$ is compatible with vertical composition and identities of $1$-cells. Hence, we have defined a $2$-functor $\rep{F}$ from $\Cr$ to $\Dr$. We note that the monoidal coherence relations satisfied by $F$ imply that, if $u:x\fl y$ and $v:y\fl z$ are $1$-cells of $\Cr$, we have
\[
\xymatrix{
{\sm F(x)}
	\ar@/^2em/ [rr] ^-{\sm \rep{F}(uv)} ^-{}="src"
	\ar@/_2em/ [rr] _-{\sm F(uv)} _-{}="tgt"
\ar@{} "src";"tgt" |-{F_{uv}}
&& {\sm F(z)}
}
\qquad=\qquad
\xymatrix@C=5em{
{\sm F(x)}
	\ar@/^2.5em/ [r] ^(0.2){\sm \rep{F}(u)} ^-{}="src1"
	\ar [r] |-{\sm F(u)} _-{}="tgt1"
\ar@{} "src1";"tgt1" |-{F_u}
	\ar@/_2.5em/ [rr] _-{\sm F(uv)} _-{}="tgt3"
&	{\sm F(y)}
	\ar@/^2.5em/ [r] ^(0.8){\sm \rep{F}(v)} ^-{}="src2"
	\ar [r] |-{\sm F(v)} _-{}="tgt2"
\ar@{} "src2";"tgt2" |-{F_v}
\ar@{} [];"tgt3" |-{F_{u,v}}
& {\sm F(z)}
}
\]
and, if $x$ is a $0$-cell of $\Cr$, we have $F_{1_x} = F_x$.

%%%%
\subsubsection{Strictification of morphisms of \pdf{2}-representations}

Let $F,G:\Cr\fl\Dr$ be $2$-representations and let $\alpha:F\dfl G$ be a morphism between them. Let us define a pseudonatural transformation $\rep{\alpha}:\rep{F}\dfl\rep{G}$. For a $0$-cell $x$ of $\Cr$, we take $\rep{\alpha}_x=\alpha_x$. If $u:x\fl y$ is a $1$-cell of $\Cr$, we define $\rep{\alpha}_u$ as the following invertible $2$-cell of $\Dr$:
\[
\vcenter{\xymatrix{
& {\sm F(y) }
	\ar@/^/ [dr] ^-{\sm \alpha_y}
\\
{\sm F(x)}
	\ar@/^/ [ur] ^-{\sm \rep{F}(u)}
	\ar@/_/ [dr] _-{\sm \alpha_x}
&& {\sm G(y)}
\\
& {\sm G(x)}
	\ar@/_/ [ur] _-{\sm \rep{G}(u)}
\ar@{} "1,2";"3,2" |-{\rep{\alpha}_u}	
}}
\qquad = \qquad
\vcenter{\xymatrix{
& {\sm F(y)}
	\ar@/^/ [dr] ^-{\sm \alpha_y}
\\
{\sm F(x)}
	\ar@/^5ex/ [ur] ^(0.4){\sm \rep{F}(u)} ^-{}="src1"
	\ar@/_/ [ur] |(0.4){\sm F(u)} _-{}="tgt1"
\ar@{} "src1"!<7.5pt,-7.5pt>;"tgt1"!<-7.5pt,7.5pt> |-{F_u}
	\ar@/_/ [dr] _-{\sm \alpha_x}
&& {\sm G(y)}
\\
& {\sm G(x)}
	\ar@/^/ [ur] |(0.6){\sm G(u)} ^-{}="src2"
	\ar@/_5ex/ [ur] _(0.6){\sm \rep{G}(u)} _-{}="tgt2"
\ar@{} "src2";"tgt2" |-{G_u^-}
\ar@{} "1,2";"3,2" |-{\alpha_u}	
}} 
\]
This defines a pseudonatural transformation $\rep{\alpha}:\rep{F}\dfl\rep{G}$. Indeed, if~$x$ is a $0$-cell of~$\Cr$, we have:
\[
\vcenter{\xymatrix{
& {\sm F(x) }
	\ar@/^/ [dr] ^-{\sm \alpha_x}
\\
{\sm F(x)}
	\ar@/^/ [ur] ^-{\sm 1_{F(x)}}
	\ar@/_/ [dr] _-{\sm \alpha_x}
&& {\sm G(x)}
\\
& {\sm G(x)}
	\ar@/_/ [ur] _-{\sm 1_{G(x)}}
\ar@{} "1,2";"3,2" |-{\rep{\alpha}_{1_x}}	
}}
\quad = \quad
\vcenter{\xymatrix{
& {\sm F(x)}
	\ar@/^/ [dr] ^-{\sm \alpha_x}
\\
{\sm F(x)}
	\ar@/^5ex/ [ur] ^(0.33){\sm 1_{F(x)}} ^-{}="src1"
	\ar@/_/ [ur] |(0.4){\sm F(1_x)} _-{}="tgt1"
\ar@{} "src1";"tgt1" |-{F_x}
	\ar@/_/ [dr] _-{\sm \alpha_x}
&& {\sm G(x)}
\\
& {\sm G(x)}
	\ar@/^/ [ur] |(0.6){\sm G(1_x)} ^-{}="src2"
	\ar@/_5ex/ [ur] _(0.67){\sm 1_{G(x)}} _-{}="tgt2"
\ar@{} "src2";"tgt2" |-{G_x^-}
\ar@{} "1,2";"3,2" |-{\alpha_{1_x}}	
}}
\quad = \quad
\vcenter{\xymatrix @C=3em{
{\sm F(x)}
	\ar@/^4ex/ [r] ^{\sm \alpha_x}
	\ar@/_4ex/ [r] _{\sm \alpha_x}
	\ar@{} [r] |{1_{\alpha_x}}
& {\sm G(x)}
}}
\]
Then, if $u:x\fl y$ and $v:y\fl z$ are $1$-cells of $\Cr$, we get:
\[
\begin{array}{c r c l}
&\vcenter{\xymatrix{
& {\sm F(z) }
	\ar@/^/ [dr] ^-{\sm \alpha_z}
\\
{\sm F(x)}
	\ar@/^/ [ur] ^-{\sm \rep{F}(uv)}
	\ar@/_/ [dr] _-{\sm \alpha_x}
&& {\sm G(z)}
\\
& {\sm G(x)}
	\ar@/_/ [ur] _-{\sm \rep{G}(uv)}
\ar@{} "1,2";"3,2" |-{\rep{\alpha}_{uv}}	
}}
\quad
& = &
\quad 
\vcenter{\xymatrix{
{\sm F(y)}
	\ar@/^6ex/ [r] ^(0.8){\sm \rep{F}(v)} ^-{}="src2"
	\ar [r] |-{\sm F(v)} _-{}="tgt2"
\ar@{} "src2";"tgt2" |-{F_v}
& {\sm F(z) }
	\ar@/^/ [dr] ^-{\sm \alpha_z}
\\
{\sm F(x)}
	\ar@/^6ex/ [u] ^(0.25){\sm \rep{F}(u)} ^-{}="src1"
	\ar [u] |(0.4){\sm F(u)} _-{}="tgt1"
\ar@{} "src1";"tgt1" |-{F_u}
	\ar@/_/ [ur] |(0.4){\sm F(uv)} _-{}="tgt3"
\ar@{} "1,1";"tgt3" |-{F_{u,v}}
	\ar@/_/ [dr] _-{\sm \alpha_x}
&& {\sm G(z)}
\\
& {\sm G(x)}
	\ar@/^/ [ur] |(0.6){\sm G(uv)} ^-{}="src6"
\ar@{} "src6";"3,3" |-{G_{u,v}^-}
	\ar [r] |-{\sm G(u)} ^-{}="src4"
	\ar@/_6ex/ [r] _(0.2){\sm \rep{G}(u)} _-{}="tgt4"
\ar@{} "src4";"tgt4" |-{G_u^-}
& {\sm G(y)}
	\ar [u] |(0.6){\sm G(v)} ^-{}="src5"
	\ar@/_6ex/ [u] _(0.75){\sm \rep{G}(v)} _-{}="tgt5"
\ar@{} "src5";"tgt5" |-{G_v^-}
\ar@{} "1,2";"3,2" |-{\alpha_{uv}}	
}}
\\
= & \quad
\vcenter{\xymatrix{
{\sm F(y)}
	\ar@/^6ex/ [r] ^(0.8){\sm \rep{F}(v)} ^-{}="src2"
	\ar@/_/ [r] |-{\sm F(v)} _-{}="tgt2"
\ar@{} "src2";"tgt2" |-{F_v}
	\ar [ddrr] |-{\sm \alpha_y} _-{}="tgt6" ^-{}="src3"
& {\sm F(z) }
	\ar@/^/ [dr] ^-{\sm \alpha_z} ^-{}="src6"
\\
{\sm F(x)}
	\ar@/^6ex/ [u] ^(0.25){\sm \rep{F}(u)} ^-{}="src1"
	\ar@/_/ [u] |(0.4){\sm F(u)} _-{}="tgt1"
\ar@{} "src1";"tgt1" |-{F_u}
	\ar@/_/ [dr] _-{\sm \alpha_x} _-{}="tgt3"
&& {\sm G(z)}
\\
& {\sm G(x)}
	\ar@/^/ [r] |-{\sm G(u)} ^-{}="src4"
	\ar@/_6ex/ [r] _(0.2){\sm \rep{G}(u)} _-{}="tgt4"
\ar@{} "src4";"tgt4" |-{G_u^-}
& {\sm G(y)}
	\ar@/^/ [u] |(0.6){\sm G(v)} ^-{}="src5"
	\ar@/_6ex/ [u] _(0.75){\sm \rep{G}(v)} _-{}="tgt5"
\ar@{} "src5";"tgt5" |-{G_v^-}
\ar@{} "src3";"tgt3" |-{\alpha_u}	
\ar@{} "src6";"tgt6" |-{\alpha_v}	
}}
\quad
& = &
\quad
\vcenter{\xymatrix{
{\sm F(y)}
	\ar [r] ^-{\sm \rep{F}(v)} 
	\ar [ddrr] |-{\sm \alpha_y} _-{}="tgt2" ^-{}="src1"
& {\sm F(z) }
	\ar@/^/ [dr] ^-{\sm \alpha_z} ^-{}="src2"
\\
{\sm F(x)}
	\ar [u] ^-{\sm \rep{F}(u)}
	\ar@/_/ [dr] _-{\sm \alpha_x} _-{}="tgt1"
&& {\sm G(z)}
\\
& {\sm G(x)}
	\ar [r] _-{\sm \rep{G}(u)} 
& {\sm G(y)}
	\ar [u] _-{\sm \rep{G}(v)} 
\ar@{} "src1";"tgt1" |-{\rep{\alpha}_u}	
\ar@{} "src2";"tgt2" |-{\rep{\alpha}_v}	
}}
\end{array}
\]
Finally, if $f:u\dfl v:x\fl y$ is a $2$-cell of~$\Cr$:
\[
\begin{array}{c c c c c}
\vcenter{\xymatrix{
& {\sm F(y)}
	\ar@/^/ [dr] ^-{\sm \alpha_y}
\\
{\sm F(x)}
	\ar@/^5ex/ [ur] ^(0.4){\sm \rep{F}(u)} ^-{}="src"
	\ar@/_/ [ur] |(0.4){\sm \rep{F}(v)} _-{}="tgt"
\ar@{} "src";"tgt" |-{\rep{F}(f)}
	\ar@/_/ [dr] _-{\sm \alpha_x}
&& {\sm G(y)}
\\
& {\sm G(x)}
	\ar@/_/ [ur] _-{\sm \rep{G}(v)}
\ar@{} "1,2";"3,2" |-{\rep{\alpha}_v}	
}}
\quad
& = & 
\vcenter{\xymatrix{
& {\sm F(y)}
	\ar@/^/ [dr] ^-{\sm \alpha_y}
\\
{\sm F(x)}
	\ar@/^9ex/ [ur] ^(0.33){\sm \rep{F}(u)} ^-{}="src1"
	\ar@/^3ex/ [ur] |-{\sm F(u)} _-{}="tgt1" ^-{}="src2"
	\ar@/_3ex/ [ur] |(0.4){\sm F(v)} _-{}="tgt2"
\ar@{} "src1";"tgt1" |-{F_u}
\ar@{} "src2";"tgt2" |-{F(f)}
	\ar@/_/ [dr] _-{\sm \alpha_x}
&& {\sm G(y)}
\\
& {\sm G(x)}
	\ar@/^/ [ur] |(0.6){\sm G(v)} ^-{}="src3"
	\ar@/_5ex/ [ur] _(0.7){\sm \rep{G}(v)} _-{}="tgt3"
\ar@{} "src3";"tgt3" |-{G_v^-}
\ar@{} "1,2";"3,2" |(0.6){\alpha_v}	
}} 
\\
& = &
\vcenter{\xymatrix{
& {\sm F(y)}
	\ar@/^/ [dr] ^-{\sm \alpha_y}
\\
{\sm F(x)}
	\ar@/^5ex/ [ur] ^(0.3){\sm \rep{F}(u)} ^-{}="src1"
	\ar@/_/ [ur] |(0.4){\sm F(u)} _-{}="tgt1" 
\ar@{} "src1";"tgt1" |-{F_u}
	\ar@/_/ [dr] _-{\sm \alpha_x}
&& {\sm G(y)}
\\
& {\sm G(x)}
	\ar@/^3ex/ [ur] |(0.6){\sm G(u)} ^-{}="src2"
	\ar@/_3ex/ [ur] |-{\sm G(v)} _-{}="tgt2" ^-{}="src3"
	\ar@/_9ex/ [ur] _(0.67){\sm \rep{G}(v)} _-{}="tgt3"
\ar@{} "src2";"tgt2" |-{G(f)}
\ar@{} "src3";"tgt3" |-{G_v^-}
\ar@{} "1,2";"3,2" |(0.4){\alpha_u}	
}} 
& = &
\quad
\vcenter{\xymatrix{
& {\sm F(y)}
	\ar@/^/ [dr] ^-{\sm \alpha_y}
\\
{\sm F(x)}
	\ar@/^/ [ur] ^-{\sm \rep{F}(u)}
	\ar@/_/ [dr] _-{\sm \alpha_x}
&& {\sm G(y)}
\\
& {\sm G(x)}
	\ar@/^/ [ur] |(0.6){\sm \rep{G}(u)} ^-{}="src"
	\ar@/_5ex/ [ur] _(0.6){\sm \rep{G}(v)} _-{}="tgt"
\ar@{} "src";"tgt" |-{\rep{G}(f)}
\ar@{} "1,2";"3,2" |-{\rep{\alpha}_u}	
}}
\end{array}
\]
\noindent
With similar computations, we check that strictification is compatible with the composition of morphisms of $2$-representations and with identities, so that it is a functor from $2\Rep(\Cr,\Dr)$ to $2\Cat(\Cr,\Dr)$.

\begin{proposition}
\label{PropEquivCofib}
Let $\Cr$ be a cofibrant $2$-category. For every $2$-category $\Dr$, the canonical inclusion 
\[
2\Cat(\Cr,\Dr) \:\longrightarrow\: 2\Rep(\Cr,\Dr) 
\]
is an equivalence of categories that is natural in~$\Dr$, with quasi-inverse given by the strictification functor. 
\end{proposition}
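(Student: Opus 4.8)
The plan is to show that the strictification functor $\rep{\cdot}\colon 2\Rep(\Cr,\Dr)\fl 2\Cat(\Cr,\Dr)$ built in \ref{suse:2repCofib} is a quasi-inverse for the canonical inclusion functor $J\colon 2\Cat(\Cr,\Dr)\fl 2\Rep(\Cr,\Dr)$, and that all the comparison data is natural in $\Dr$.

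First I would check that $\rep{\cdot}\circ J$ is the identity of $2\Cat(\Cr,\Dr)$. If $F\colon\Cr\fl\Dr$ is a genuine $2$-functor, its coherence cells $F_{u,v}$ and $F_x$ are identities, hence so is the composite $2$-cell $F_{a_1,\dots,a_n}$ for any sequence of generating $1$-cells. Since $\Cr$ is cofibrant, every $1$-cell $u$ factors uniquely as $u=a_1\cdots a_n$, and $F$ preserves this composition strictly, so $\rep{F}(u)=F(a_1)\cdots F(a_n)=F(u)$ and $F_u=1_{F(u)}$. Substituting this into the formula defining $\rep{F}$ on $2$-cells gives $\rep{F}(f)=F(f)$, and likewise $\rep{\alpha}=\alpha$ for a morphism $\alpha$ of $2$-functors; hence $\rep{\cdot}\circ J = 1_{2\Cat(\Cr,\Dr)}$ on the nose.

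Next I would produce a natural isomorphism $\Theta\colon J\circ\rep{\cdot}\dfl 1_{2\Rep(\Cr,\Dr)}$. For a $2$-representation $F$, let $\Theta_F\colon\rep{F}\dfl F$ be the morphism of $2$-representations whose component at each $0$-cell $x$ is the identity $1$-cell of $F(x)$ (legitimate since $\rep{F}$ and $F$ agree on $0$-cells) and whose component at a $1$-cell $u\colon x\fl y$ is the invertible $2$-cell $F_u\colon\rep{F}(u)\dfl F(u)$. With the $0$-cell components being identities, the three coherence axioms for a pseudonatural transformation specialise exactly to identities already established in \ref{suse:2repCofib}: compatibility with a $2$-cell $f$ is the very equation defining $\rep{F}(f)$; compatibility with a composite $uv$ is the relation expressing $F_{uv}$ as the pasting of $F_u$, $F_v$ and $F_{u,v}$; and compatibility with identities is $F_{1_x}=F_x$. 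Each $\Theta_F$ is invertible because each $F_u$ is. For naturality in $F$, given a morphism $\alpha\colon F\dfl G$ of $2$-representations, one checks that $\rep{\alpha}$ composed with $\Theta_G$ equals $\Theta_F$ composed with $\alpha$; evaluating the $1$-cell components, this is precisely the equation defining $\rep{\alpha}_u$ in terms of $\alpha_u$, $F_u$ and $G_u^-$. Together with the previous paragraph this yields the claimed equivalence of categories.

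Finally, naturality in $\Dr$ is routine: a $2$-functor $H\colon\Dr\fl\Dr'$ induces functors $2\Rep(\Cr,H)$ and $2\Cat(\Cr,H)$ by post-composition, and since strictification is built solely from the coherence $2$-cells of a $2$-representation, which $H$ preserves on the nose, one has $\rep{H\circ F}=H\circ\rep{F}$ on objects and the component of $\Theta_{H\circ F}$ is the image under $H$ of the component of $\Theta_F$; for the inclusion the compatibility is trivial. I expect the only real work to be the diagram chases verifying the pseudonaturality axioms for $\Theta_F$ and the naturality square for $\Theta$, but each of these reduces, term by term, to the monoidal coherence relations of $F$, equivalently to pasting identities already displayed in the strictification of morphisms, so no genuinely new computation is required.
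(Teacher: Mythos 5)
Your proposal is correct and follows essentially the same route as the paper: the key step in both is the pseudonatural isomorphism $\rep{F}\dfl F$ with identity components on $0$-cells and components $F_u$ on $1$-cells, whose coherence axioms reduce to the defining equation of $\rep{F}(f)$ and the monoidal coherence relations of $F$, and whose naturality in $F$ is the defining equation of $\rep{\alpha}_u$. The only difference is cosmetic: you verify separately that strictification composed with the inclusion is the identity on the nose, whereas the paper obtains both halves of the equivalence from the single family $\phi_F$.
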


\begin{proof}
It is sufficient to check that, for every $2$-representation $F:\Cr\fl\Dr$, there exists a pseudonatural isomorphism $\phi_F:\rep{F}\dfl F$ that is itself natural in $F$. We define $\phi_F$ as follows:
\begin{itemize}
\item if $x$ is a $0$-cell of $\Cr$, then $\rep{F}(x)=F(x)$ and we take $(\phi_F)_x=1_x$,
\item if $u:x\fl y$ is a $1$-cell of $\Cr$, then $(\phi_F)_u:\rep{F}(u)\dfl F(u)$ is defined as the invertible coherence $2$-cell $F_u:\rep{F}(u)\dfl F(u)$.
\end{itemize}
These data satisfy the required coherence properties: the compatibility with the $2$-cells of~$\Cr$ is exactly the definition of $\rep{F}$ and the compatibility with horizontal composition and identities comes from the monoidal coherence relations of~$F$, as already checked. Moreover, if $\alpha:F\dfl G$ is a morphism of $2$-representations, the naturality condition
\[
\xymatrix @!C @C=3em @R=1em{
& {F}
	\ar@2@/^/ [dr] ^-{\alpha}
\\
{\rep{F}}
	\ar@2@/^/ [ur] ^-{\phi_F}
	\ar@2@/_/ [dr] _-{\rep{\alpha}}
			\ar@{} [rr] |-{=}
&& {G}
\\
& {\rep{G}}
	\ar@2@/_/ [];[ur]!<-0.1pt,-0.1pt> _{\phi_G}
}
\]
corresponds, on each $1$-cell $u$ of $\Cr$, to the definition of $\rep{\alpha}$.
\end{proof}

%%%%%%
\subsection{\pdf{2}-representations and cofibrant approximations}

Let us recall that, for a $2$-category $\Cr$, we denote by $\rep{\Cr}$ its standard cofibrant replacement. We note that the definition of a $2$-functor from $\rep{\Cr}$ to a $2$-category $\Dr$ is exactly the same as the one of a pseudofunctor from~$\Cr$ to $\Dr$, yielding the following isomorphism of categories:
\[
2\Rep(\Cr,\Dr) \:\simeq\: 2\Cat(\rep{\Cr},\Dr).
\]
In particular, for every monoid $\M$, we get an isomorphism of categories:
\[
\Act(\M) \:\simeq\: 2\Cat(\rep{\M},\Cat).
\]
In what follows, we prove that weak versions of these isomorphisms exist for all cofibrant approximations. More precisely, the category of $2$-representations of a $2$-category $\Cr$ into a $2$-category $\Dr$ is equivalent to the one of $2$-functors from any cofibrant approximation $\tilde{\Cr}$ of $\Cr$ into $\Dr$. 

\begin{lemma}
\label{LemEquivWE}
Let $\Cr$ and $\Dr$ be $2$-categories. The following assertions are equivalent:
\begin{enumerate}[\bf i)]
\item the $2$-categories $\Cr$ and $\Dr$ are pseudoequivalent, \ie, there exist pseudofunctors $F:\Cr\fl\Dr$ and $G:\Dr\fl\Cr$ such that
\[
GF \:\simeq\: 1_{\Cr}
\qquad\text{and}\qquad
FG \:\simeq\: 1_{\Dr}; 
\]
\item for every $2$-category $\Er$, there is an equivalence of categories
\[
2\Rep(\Cr,\Er) \:\approx\: 2\Rep(\Dr,\Er)
\] 
that is natural in $\Er$.
\end{enumerate}
\end{lemma}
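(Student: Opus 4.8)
The plan is to recognise this as a (2-categorical) Yoneda statement: the assignment $\Er\mapsto 2\Rep(\Cr,\Er)=\mathrm{PsFun}(\Cr,\Er)$ is a representable-like construction on the ``2-category'' whose objects are 2-categories, whose 1-cells are pseudofunctors, and whose 2-cells are pseudonatural transformations, and the two implications are the two halves of Yoneda. Throughout, ``natural in $\Er$'' refers to the evident postcomposition functoriality.

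\textbf{(i) $\Rightarrow$ (ii).} Given $F\colon\Cr\fl\Dr$ and $G\colon\Dr\fl\Cr$, I would take the candidate equivalence to be precomposition: for each $\Er$, set $\Psi_{\Er}(H)=H\circ G$ and $\Psi_{\Er}(\alpha)=\alpha\circ G$ (whiskering a pseudonatural transformation by a pseudofunctor), giving a functor $2\Rep(\Cr,\Er)\fl2\Rep(\Dr,\Er)$; symmetrically $\Phi_{\Er}(H')=H'\circ F$ runs the other way. Since precomposition commutes strictly with postcomposition, both families are natural in $\Er$. To see they are quasi-inverse I would whisker the given pseudonatural equivalences $\theta\colon GF\simeq1_{\Cr}$ and $\eta\colon FG\simeq1_{\Dr}$: for $K$ in $2\Rep(\Cr,\Er)$, $K\circ\theta$ is a morphism $\Phi_{\Er}\Psi_{\Er}(K)=K\circ GF\fl K$ natural in $K$, and dually for $\Psi_{\Er}\Phi_{\Er}$. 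Verifying that these whiskered transformations are honest \emph{isomorphisms} in the categories $2\Rep$, and not merely pointwise equivalences, is the delicate step; the cleanest route is to arrange $\theta,\eta$ as adjoint pseudo-equivalences and transport the triangle identities.

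\textbf{(ii) $\Rightarrow$ (i).} This direction is essentially formal. Apply (ii) with $\Er=\Cr$: the equivalence $2\Rep(\Cr,\Cr)\approx2\Rep(\Dr,\Cr)$ carries $1_{\Cr}$ to a pseudofunctor $G\colon\Dr\fl\Cr$, and the Yoneda lemma (using naturality in $\Er$) identifies the whole equivalence, up to isomorphism, with precomposition by $G$. Apply the quasi-inverse equivalence with $\Er=\Dr$ to $1_{\Dr}$ to obtain $F\colon\Cr\fl\Dr$, identified with precomposition by $F$. Now evaluate the natural isomorphism $(-\circ F)\circ(-\circ G)\cong\mathrm{id}$ at the object $1_{\Cr}\in2\Rep(\Cr,\Cr)$: it gives an isomorphism $GF\cong1_{\Cr}$ in $2\Rep(\Cr,\Cr)$, hence in particular a pseudonatural equivalence $GF\simeq1_{\Cr}$; evaluating the companion isomorphism at $1_{\Dr}$ gives $FG\simeq1_{\Dr}$. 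This is exactly (i).

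\textbf{Main obstacle.} The conceptual content is light; the work is 2-categorical coherence, and the genuinely awkward point sits in the (i)$\Rightarrow$(ii) direction: a morphism of $2\Rep(\Cr,\Er)$ is invertible only when its $0$-cell components are invertible $1$-cells of $\Er$, whereas whiskering by a pseudo-equivalence produces components that are only equivalences. Two ways around this. One: package $2\Rep(-,\Er)$ as a genuine $2$-functor from the $2$-category of $2$-categories, pseudofunctors and pseudonatural transformations into $\Cat$, and invoke that a $2$-functor sends adjoint equivalences to adjoint equivalences, hence induces an equivalence of hom-categories (this amounts to checking that whiskering composes strictly and respects the coherence data). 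Two: route everything through the isomorphisms $2\Rep(\Cr,\Er)\cong2\Cat(\rep{\Cr},\Er)$ and $2\Rep(\Dr,\Er)\cong2\Cat(\rep{\Dr},\Er)$, use that the standard cofibrant replacements of pseudo-equivalent $2$-categories are joined by a $2$-functor with a $2$-functor quasi-inverse, and reduce to Proposition~\ref{PropEquivCofib} together with the purely formal behaviour of $2\Cat(-,\Er)$ on such data.
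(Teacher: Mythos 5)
Your route is the same as the paper's in both directions: precomposition by $F$ and $G$ for (i) $\Rightarrow$ (ii), and for (ii) $\Rightarrow$ (i) the extraction $G=\Phi_{\Cr}(1_{\Cr})$, $F=\Psi_{\Dr}(1_{\Dr})$ followed by evaluation of the naturality squares and of the natural isomorphisms at the identity pseudofunctors. Your Yoneda phrasing of the second direction is exactly the computation the paper performs (strict naturality even gives $\Phi_{\Er}=(-\circ G)$ on the nose, not merely up to isomorphism), and that direction is complete.

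The problem sits in (i) $\Rightarrow$ (ii), exactly where you flag it, and neither of your proposed repairs closes it. An isomorphism of the category $2\Rep(\Cr,\Er)$ is a pseudonatural transformation whose $1$-cell components are strictly invertible, since its inverse must satisfy $\alpha_x\beta_x=1$ on the nose. Whiskering $\theta\colon GF\dfl 1_{\Cr}$ by a pseudofunctor $H$ yields components $H(\theta_x)$, and a pseudofunctor sends even a strictly invertible $1$-cell only to an equivalence; so $H\theta$ is not invertible in $2\Rep(\Cr,\Er)$. Your first repair founders on the same computation: $2\Rep(-,\Er)$ does not carry invertible $2$-cells of the $2$-category of $2$-categories and pseudofunctors to invertible natural transformations, precisely because of this whiskering. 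Your second repair needs $\rep{\Cr}$ and $\rep{\Dr}$ to be joined by strict $2$-functors that are mutually inverse up to invertible (not merely equivalence-valued) pseudonatural transformations, which already fails on $0$-cells when $\Cr$ and $\Dr$ have different object sets. The gap is not cosmetic. Read (i) with $\simeq$ meaning pseudonatural equivalence, which is the reading the paper needs in order to identify pseudoequivalences with weak equivalences, and take $\Cr$ the terminal $2$-category and $\Dr$ the ``equivalence'' $2$-category of the remark following Theorem~\ref{Theorem:CoherentPresentationsCofibrantApproximations}. Any family $\Phi_{\Er}$ that is strictly natural in $\Er$ equals precomposition by the unique pseudofunctor $\Dr\fl\Cr$, hence lands in the constant pseudofunctors; for $\Er=\Dr$ the identity of $\Dr$ is not isomorphic in $2\Rep(\Dr,\Dr)$ to any constant pseudofunctor, because the underlying category of $\Dr$ is free and so has no strictly invertible $1$-cell joining its two $0$-cells. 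So no such natural equivalence exists. The paper's own proof dispatches this step with the single assertion that $HGF\simeq H$, so you have at least located where the real work, or the real problem, lies; a correct treatment must either strengthen hypothesis (i) or weaken the comparison in (ii), for instance by retaining modifications and comparing hom-$2$-categories up to biequivalence rather than their $1$-truncations up to equivalence of categories.
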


\begin{proof}
Let us assume that $\Cr$ and $\Dr$ are pseudoequivalent. As a consequence, for all pseudofunctors $H:\Cr\fl\Er$ and $K:\Dr\fl\Er$, we have:
\[
HGF \:\simeq\: H
\qquad\text{and}\qquad
KFG \:\simeq\: K.
\]
Thus the functors $2\Rep(F,\Er)$ and $2\Rep(G,\Er)$, respectively sending a pseudofunctor~$K:\Dr\fl\Er$ to $KF$ and a pseudofunctor~$H:\Cr\fl\Er$ to $HG$, form the required equivalence of categories. 

Conversely, let us assume that, for every $2$-category $\Er$, we have $2\Rep(\Cr,\Er)\approx 2\Rep(\Dr,\Er)$ natural in $\Er$. We denote by
\[
\Phi_{\Er} \::\: 2\Rep(\Cr,\Er) \:\fl\: 2\Rep(\Dr,\Er)
\qquad\text{and}\qquad
\Psi_{\Er} \::\: 2\Rep(\Dr,\Er) \:\fl\: 2\Rep(\Cr,\Er)
\]
the functors that constitute the equivalence. This means that, for all pseudofunctors $H:\Cr\fl\Er$ and $K:\Dr\fl\Er$, we have the following isomorphisms:
\[
\Psi_{\Er} \Phi_{\Er} (H) \:\simeq\: H 
\qquad\text{and}\qquad
\Phi_{\Er} \Psi_{\Er} (K) \:\simeq\: K.
\]
The naturality of the equivalence means that, for all $2$-categories $\Er$ and $\Er'$ and every pseudofunctor $H:\Er\fl\Er'$, the following diagrams commute:
\[
\xymatrix @!C @C=3em {
2\Rep(\Cr,\Er)
	\ar [r] ^-{\Phi_{\Er}}
	\ar [d] _-{2\Rep(\Cr,H)}
	\ar@{} [dr] |-{\sm =}
& 2\Rep(\Dr,\Er)
	\ar [d] ^-{2\Rep(\Dr,H)}
\\
2\Rep(\Cr,\Er')
	\ar [r] _-{\Phi_{\Er'}}
& 2\Rep(\Dr,\Er')
}
\quad
\xymatrix @!C @C=3em {
2\Rep(\Dr,\Er)
	\ar [r] ^-{\Psi_{\Er}}
	\ar [d] _-{2\Rep(\Dr,H)}
	\ar@{} [dr] |-{\sm =}
& 2\Rep(\Cr,\Er)
	\ar [d] ^-{2\Rep(\Cr,H)}
\\
2\Rep(\Dr,\Er')
	\ar [r] _-{\Psi_{\Er'}}
& 2\Rep(\Cr,\Er').
}
\]
We define the pseudofunctors $F:\Cr\fl\Dr$ and $G:\Dr\fl\Cr$ as follows:
\[
F \:=\: \Psi_{\Dr}(1_{\Dr})
\qquad\text{and}\qquad
G \:=\: \Phi_{\Cr}(1_{\Cr}).
\]
We consider the naturality condition on $\Phi$ with $\Er=\Cr$, $\Er'=\Dr$ and $H=F$. This gives an equality
\[
F\circ \Phi_{\Cr}(K) \:=\: \Phi_{\Dr}(F\circ K)
\]
for every pseudofunctor $K:\Cr\fl\Cr$. Thus, in the special case $K=1_{\Cr}$, we get 
\[
FG \:=\: \Phi_{\Dr}(F) \:=\: \Phi_{\Dr}\circ\Psi_{\Dr}(1_{\Dr}) \:\simeq\: 1_{\Dr}.
\]
In a symmetric way, the naturality condition on $\Psi$ gives $GF \simeq 1_{\Cr}$, thus concluding the proof.
\end{proof}

\noindent
A combination of Proposition~\ref{PropEquivCofib} and of Lemma~\ref{LemEquivWE} gives the following result.

\begin{proposition}
\label{Proposition:ActionCofibrantApproximation}
Let $\Cr$ and $\tilde{\Cr}$ be $2$-categories, with $\tilde{\Cr}$ cofibrant. The following assertions are equivalent:
\begin{enumerate}[\bf i)]
\item the $2$-category $\tilde{\Cr}$ is a cofibrant approximation of $\Cr$;
\item for every $2$-category $\Dr$, there is an equivalence of categories
\[
2\Rep(\Cr,\Dr) \:\approx\: 2\Cat(\tilde{\Cr},\Dr)
\]
that is natural in $\Dr$.
\end{enumerate}
\end{proposition}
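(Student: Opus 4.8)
The plan is to obtain the statement by composing the two equivalences supplied by Proposition~\ref{PropEquivCofib} and Lemma~\ref{LemEquivWE}, after recording that the notion of pseudoequivalence appearing in Lemma~\ref{LemEquivWE} coincides with that of weak equivalence between $2$-categories. With this identification, the assertion ``$\tilde{\Cr}$ is a cofibrant approximation of $\Cr$'' unfolds as ``$\tilde{\Cr}$ is cofibrant and pseudoequivalent to $\Cr$'', which is exactly the characterisation of weak equivalences recalled just before Theorem~\ref{Theorem:CoherentPresentationsCofibrantApproximations}.

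For the implication \textbf{i)} $\Rightarrow$ \textbf{ii)}, I would assume $\tilde{\Cr}$ is cofibrant and pseudoequivalent to $\Cr$. Lemma~\ref{LemEquivWE} then yields, for every $2$-category $\Dr$, an equivalence $2\Rep(\Cr,\Dr)\approx 2\Rep(\tilde{\Cr},\Dr)$ natural in $\Dr$. Since $\tilde{\Cr}$ is cofibrant, Proposition~\ref{PropEquivCofib} provides a second equivalence $2\Rep(\tilde{\Cr},\Dr)\approx 2\Cat(\tilde{\Cr},\Dr)$, again natural in $\Dr$, realised by the canonical inclusion and its strictification quasi-inverse. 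Composing these two gives the required natural equivalence $2\Rep(\Cr,\Dr)\approx 2\Cat(\tilde{\Cr},\Dr)$; naturality in $\Dr$ survives the composition because a composite of pseudonatural families of equivalences is again pseudonatural.

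For the converse \textbf{ii)} $\Rightarrow$ \textbf{i)}, I would assume that for every $\Dr$ there is a natural equivalence $2\Rep(\Cr,\Dr)\approx 2\Cat(\tilde{\Cr},\Dr)$. Using again that $\tilde{\Cr}$ is cofibrant, Proposition~\ref{PropEquivCofib} gives $2\Cat(\tilde{\Cr},\Dr)\approx 2\Rep(\tilde{\Cr},\Dr)$ natural in $\Dr$; composing produces a natural equivalence $2\Rep(\Cr,\Dr)\approx 2\Rep(\tilde{\Cr},\Dr)$. By the converse half of Lemma~\ref{LemEquivWE}, $\Cr$ and $\tilde{\Cr}$ are pseudoequivalent, hence weakly equivalent; together with the standing hypothesis that $\tilde{\Cr}$ is cofibrant, this says precisely that $\tilde{\Cr}$ is a cofibrant approximation of $\Cr$.

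The genuine content is thus entirely in Proposition~\ref{PropEquivCofib} and Lemma~\ref{LemEquivWE}; the residual points are formal: that naturality in the variable $2$-category is stable under composition of equivalences, and that the variable called $\Er$ in Lemma~\ref{LemEquivWE} may be renamed $\Dr$. The one spot requiring slight care is the bookkeeping of which functor realises each equivalence and in which direction, since in \textbf{ii)} $\Rightarrow$ \textbf{i)} the composite is fed into the converse direction of Lemma~\ref{LemEquivWE}, whose argument extracts the pseudofunctors $F$ and $G$ from the values of the equivalence functors at identity pseudofunctors; no new obstacle arises, but this is the step I would write out rather than leave to the reader.
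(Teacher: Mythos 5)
Your proposal is correct and follows exactly the route the paper takes: the paper's entire proof is the single sentence that the proposition is obtained by combining Proposition~\ref{PropEquivCofib} with Lemma~\ref{LemEquivWE}, and you have simply written out that composition in both directions, including the (standing, and needed) use of cofibrancy of $\tilde{\Cr}$ to identify pseudoequivalence with weak equivalence. No gaps.
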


\noindent Finally, an application of Theorem~\ref{Theorem:CoherentPresentationsCofibrantApproximations} concludes the proof of Theorem~\ref{Theorem:CoherentAction}. In the particular case of Artin monoids, we thus get Deligne's Theorem~1.5 of~\cite{Deligne97} for any Artin monoid as a consequence of Theorem~\ref{Theorem:GarsideCoherentPresentation}. Moreover, Theorem~\ref{Theorem:ArtinCoherentPresentation} gives a similar result in terms of Artin's coherent presentation, formalising the paragraph~1.3 of~\cite{Deligne97} on the actions of $\B^+_4$.

%%%%%%%%%%%%%%%%%%%%%%%%%%%%%%%%%%%%%%%%%%%%%%%
%%%%%%%%%%%%%%%%%%%%%%%%%%%%%%%%%%%%%%%%%%%%%%%
%% Bibliographie
%%%%%%%%%%%%%%%%%%%%%%%%%%%%%%%%%%%%%%%%%%%%%%%
%%%%%%%%%%%%%%%%%%%%%%%%%%%%%%%%%%%%%%%%%%%%%%%
\begin{small}
\renewcommand{\refname}{\Large\textsc{References}}
\bibliographystyle{amsplain}
\bibliography{bibliographie}

\providecommand{\bysame}{\leavevmode\hbox to3em{\hrulefill}\thinspace}
\providecommand{\MR}{\relax\ifhmode\unskip\space\fi MR }
% \MRhref is called by the amsart/book/proc definition of \MR.
\providecommand{\MRhref}[2]{%
  \href{http://www.ams.org/mathscinet-getitem?mr=#1}{#2}
}
\providecommand{\href}[2]{#2}
\begin{thebibliography}{10}

\bibitem{Baez97}
John Baez, \emph{Higher-dimensional algebra. {II}. {$2$}-{H}ilbert spaces},
  Adv. Math. \textbf{127} (1997), no.~2, 125--189.

\bibitem{BaezCrans04}
John Baez and Alissa Crans, \emph{Higher-dimensional algebra. {VI}. {L}ie
  2-algebras}, Theory Appl. Categ. \textbf{12} (2004), 492--538.

\bibitem{BourbakiLie4-6}
N.~Bourbaki, \emph{\'{E}l\'ements de math\'ematique. {F}asc. {XXXIV}. {G}roupes
  et alg\`ebres de {L}ie. {C}hapitre {IV}~: {G}roupes de {C}oxeter et
  syst\`emes de {T}its. {C}hapitre {V}~: {G}roupes engendr\'es par des
  r\'eflexions. {C}hapitre {VI}~: syst\`emes de racines}, Actualit\'es
  Scientifiques et Industrielles, No. 1337, Hermann, Paris, 1968.

\bibitem{BrieskornSaito72}
Egbert Brieskorn and Kyoji Saito, \emph{Artin-{G}ruppen und
  {C}oxeter-{G}ruppen}, Invent. Math. \textbf{17} (1972), 245--271.

\bibitem{Brown92}
Kenneth Brown, \emph{The geometry of rewriting systems: a proof of the
  {A}nick-{G}roves-{S}quier theorem}, Algorithms and classification in
  combinatorial group theory ({B}erkeley, {CA}, 1989), Math. Sci. Res. Inst.
  Publ., vol.~23, Springer, New York, 1992, pp.~137--163.

\bibitem{Buchberger65}
Bruno Buchberger, \emph{{Ein Algorithmus zum Auffinden der Basiselemente des
  Restklassenringes nach einem nulldimensionalen Polynomideal (An Algorithm for
  Finding the Basis Elements in the Residue Class Ring Modulo a Zero
  Dimensional Polynomial Ideal)}}, Ph.D. thesis, Mathematical Institute,
  University of Innsbruck, Austria, 1965, English translation in J. of Symbolic
  Computation, Special Issue on Logic, Mathematics, and Computer Science:
  Interactions. Vol. 41, Number 3-4, Pages 475--511, 2006.

\bibitem{Burroni93}
Albert Burroni, \emph{Higher-dimensional word problems with applications to
  equational logic}, Theoret. Comput. Sci. \textbf{115} (1993), no.~1, 43--62.

\bibitem{Dehornoy02}
Patrick Dehornoy, \emph{Groupes de {G}arside}, Ann. Sci. \'Ecole Norm. Sup. (4)
  \textbf{35} (2002), no.~2, 267--306.

\bibitem{DehornoyDigneGodelleKrammerMichel13}
Patrick Dehornoy, François Digne, Eddy Godelle, Daan Krammer, and Jean Michel,
  \emph{Foundations of {G}arside theory}, EMS Tracts Math., vol.~22, European
  Mathematical Society, 2015.

\bibitem{DehornoyLafont03}
Patrick Dehornoy and Yves Lafont, \emph{Homology of {G}aussian groups}, Ann.
  Inst. Fourier (Grenoble) \textbf{53} (2003), no.~2, 489--540.

\bibitem{DehornoyParis99}
Patrick Dehornoy and Luis Paris, \emph{Gaussian groups and {G}arside groups,
  two generalisations of {A}rtin groups}, Proc. London Math. Soc. (3)
  \textbf{79} (1999), no.~3, 569--604.

\bibitem{Deligne72}
Pierre Deligne, \emph{Les immeubles des groupes de tresses g\'en\'eralis\'es},
  Invent. Math. \textbf{17} (1972), 273--302.

\bibitem{Deligne97}
\bysame, \emph{Action du groupe des tresses sur une cat\'egorie}, Invent. Math.
  \textbf{128} (1997), no.~1, 159--175.

\bibitem{Elgueta08}
Josep Elgueta, \emph{Representation theory of 2-groups on {K}apranov and
  {V}oevodsky's 2-vector spaces}, Adv. Math. \textbf{213} (2008), no.~1,
  53--92.

\bibitem{EliasWilliamson13}
Ben Elias and Geordie Williamson, \emph{Soergel calculus}, arXiv:1309.0865,
  2013.

\bibitem{GanterKapranov08}
Nora Ganter and Mikhail Kapranov, \emph{Representation and character theory in
  2-categories}, Adv. Math. \textbf{217} (2008), no.~5, 2268--2300.

\bibitem{Garside69}
Frank Garside, \emph{The braid group and other groups}, Quart. J. Math. Oxford
  Ser. (2) \textbf{20} (1969), 235--254.

\bibitem{GebhardtGonzales10}
Volker Gebhardt and Juan Gonz{\'a}lez-Meneses, \emph{The cyclic sliding
  operation in {G}arside groups}, Math. Z. \textbf{265} (2010), no.~1, 85--114.

\bibitem{Geck12}
Meinolf Geck, \emph{{P}y{C}ox: computing with (finite) {C}oxeter groups and
  {I}wahori-{H}ecke algebras}, 2012, arXiv:1201.5566.

\bibitem{GeckPfeiffer00}
Meinolf Geck and G{\"o}tz Pfeiffer, \emph{Characters of finite {C}oxeter groups
  and {I}wahori-{H}ecke algebras}, London Mathematical Society Monographs. New
  Series, vol.~21, The Clarendon Press Oxford University Press, New York, 2000.

\bibitem{GuiraudMalbos09}
Yves Guiraud and Philippe Malbos, \emph{Higher-dimensional categories with
  finite derivation type}, Theory Appl. Categ. \textbf{22} (2009), no.~18,
  420--478.

\bibitem{GuiraudMalbos12}
\bysame, \emph{Higher-dimensional normalisation strategies for acyclicity},
  Adv. Math. \textbf{231} (2012), no.~3-4, 2294--2351.

\bibitem{GuiraudMalbos13}
\bysame, \emph{Polygraphs of finite derivation type}, Preprint,
  arXiv:1402.2587, 2014.

\bibitem{GuiraudMalbosMimram13}
Yves Guiraud, Philippe Malbos, and Samuel Mimram, \emph{A homotopical
  completion procedure with applications to coherence of monoids}, 24th
  International Conference on Rewriting Techniques and Applications (RTA 2013),
  Leibniz International Proceedings in Informatics (LIPIcs), vol.~21, 2013,
  pp.~223--238.

\bibitem{KapranovVoevodsky94}
Mikhail Kapranov and Vladimir Voevodsky, \emph{{$2$}-categories and
  {Z}amolodchikov tetrahedra equations}, Algebraic groups and their
  generalizations: quantum and infinite-dimensional methods ({U}niversity
  {P}ark, {PA}, 1991), Proc. Sympos. Pure Math., vol.~56, Amer. Math. Soc.,
  Providence, RI, 1994, pp.~177--259.

\bibitem{KapurNarendran85}
Deepak Kapur and Paliath Narendran, \emph{A finite {T}hue system with decidable
  word problem and without equivalent finite canonical system}, Theoret.
  Comput. Sci. \textbf{35} (1985), no.~2-3, 337--344.

\bibitem{KnuthBendix70}
Donald Knuth and Peter Bendix, \emph{Simple word problems in universal
  algebras}, Computational {P}roblems in {A}bstract {A}lgebra ({P}roc. {C}onf.,
  {O}xford, 1967), Pergamon, Oxford, 1970, pp.~263--297.

\bibitem{Lack02}
Stephen Lack, \emph{A {Q}uillen model structure for 2-categories}, $K$-Theory
  \textbf{26} (2002), no.~2, 171--205.

\bibitem{Lack04}
\bysame, \emph{A {Q}uillen model structure for bicategories}, $K$-Theory
  \textbf{33} (2004), no.~3, 185--197.

\bibitem{Lack10}
\bysame, \emph{Icons}, Appl. Categ. Structures \textbf{18} (2010), no.~3,
  289--307.

\bibitem{LyndonSchupp77}
Roger Lyndon and Paul Schupp, \emph{Combinatorial group theory}, Classics in
  Mathematics, Springer-Verlag, Berlin, 2001, Reprint of the 1977 edition.

\bibitem{MacLane98}
Saunders Mac~Lane, \emph{Categories for the working mathematician}, 2nd ed.,
  Springer, 1998.

\bibitem{ManinSchechtman89}
Yuri Manin and Vadim Schechtman, \emph{Arrangements of hyperplanes, higher
  braid groups and higher {B}ruhat orders}, Algebraic number theory, Adv. Stud.
  Pure Math., vol.~17, Academic Press, Boston, MA, 1989, pp.~289--308.

\bibitem{Michel99}
Jean Michel, \emph{A note on words in braid monoids}, J. Algebra \textbf{215}
  (1999), no.~1, 366--377.

\bibitem{Rouquier08}
Raphaël Rouquier, \emph{2-{K}ac-{M}oody algebras}, preprint, 2008.

\bibitem{Squier94}
Craig Squier, Friedrich Otto, and Yuji Kobayashi, \emph{A finiteness condition
  for rewriting systems}, Theoret. Comput. Sci. \textbf{131} (1994), no.~2,
  271--294.

\bibitem{Street76}
Ross Street, \emph{Limits indexed by category-valued {$2$}-functors}, J. Pure
  Appl. Algebra \textbf{8} (1976), no.~2, 149--181.

\bibitem{Tietze08}
Heinrich Tietze, \emph{\"{U}ber die topologischen {I}nvarianten
  mehrdimensionaler {M}annigfaltigkeiten}, Monatsh. Math. Phys. \textbf{19}
  (1908), no.~1, 1--118.

\bibitem{Tits81}
Jacques Tits, \emph{A local approach to buildings}, The geometric vein,
  Springer, New York, 1981, pp.~519--547.

\end{thebibliography}
\end{small}

%%%%%%%%%%%%%%%%%%%%%%%%%%%%%%%%%%%%%%%%%%%%%%%
%%%%%%%%%%%%%%%%%%%%%%%%%%%%%%%%%%%%%%%%%%%%%%%
%% Adresses
%%%%%%%%%%%%%%%%%%%%%%%%%%%%%%%%%%%%%%%%%%%%%%%
%%%%%%%%%%%%%%%%%%%%%%%%%%%%%%%%%%%%%%%%%%%%%%%
\vfill

\noindent \textsc{Stéphane Gaussent} \\
\textsf{stephane.gaussent@univ-st-etienne.fr} \\
Université de Lyon, Institut Camille Jordan, CNRS UMR 5208 \\
Université Jean Monnet, 42023 Saint-Étienne Cedex 2, France.

\bigskip
\noindent \textsc{Yves Guiraud} \\
\textsf{yves.guiraud@pps.univ-paris-diderot.fr} \\
INRIA, Laboratoire PPS, CNRS UMR 7126 \\
Université Paris 7, Case 7014, 75205 Paris Cedex 13, France.

\bigskip
\noindent \textsc{Philippe Malbos} \\
\textsf{malbos@math.univ-lyon1.fr} \\
Université de Lyon, Institut Camille Jordan, CNRS UMR 5208 \\
Université Claude Bernard Lyon 1, 43 boulevard du 11 novembre 1918, 69622 Villeurbanne Cedex, France.

%%%%%%%%%%%%%%%%%%%%%%%%%%%%%%%%%%%%%%%%%%%%%%%%
%%%%%%%%%%%%%%%%%%%%%%%%%%%%%%%%%%%%%%%%%%%%%%%%
\end{document}
%%%%%%%%%%%%%%%%%%%%%%%%%%%%%%%%%%%%%%%%%%%%%%%%
%%%%%%%%%%%%%%%%%%%%%%%%%%%%%%%%%%%%%%%%%%%%%%%%